\newcommand{\labbel}{\label}
\newtheorem{theorem}{Theorem}[section]
\newtheorem{lemma}[theorem]{Lemma}
\newtheorem{proposition}[theorem]{Proposition} 
\newtheorem{corollary}[theorem]{Corollary}
\newtheorem*{theorem*}{Theorem}
\newtheorem*{corollary*}{Corollary}
\theoremstyle{definition}
\newtheorem{definition}[theorem]{Definition}
\newtheorem{definitions}[theorem]{Definitions}
\newtheorem{problems}[theorem]{Problems} 
\theoremstyle{remark}
\newtheorem{remark}[theorem]{Remark}
\newtheorem{example}[theorem]{Example}
\newtheorem{notation}[theorem]{Notation} 
\newtheorem*{acknowledgement}{Acknowledgement}
\newtheorem*{disclaimer}{Disclaimer}
\newcommand{\+}{\mathbin{\hash}}
\DeclareMathOperator*{\pl}{\mathbin{\hash}}
\DeclareMathOperator*{\nsum}
    {\ifdim\displaywidth>0pt {{\mbox{\Large \#}}}
     \else{{\mbox{\large \#}}}\fi} 
\newcommand{\sumh}{\sum^{\#}}
\newcommand{\sumf}{\sum^{F}}
\newcommand{\sumg}{\sum^{G}}
\newcommand{\sumhh}{\sum^{H}}
\newcommand{\sh}{S}
\newcommand{\snorm}{S^{\Sigma}}
\begin{document}
 
\title
{Some transfinite natural sums}

\author{Paolo Lipparini} 
\address{Doppiomento di Matematica\\Viale della  Ricerca Scientifica\\II Universit\`a di Roma (Tor Vergata)\\I-00133 ROME ITALY}
\urladdr{http://www.mat.uniroma2.it/\textasciitilde lipparin}

\keywords{Ordinal number; transfinite natural sum; mixed sum}

\subjclass[2010]{Primary 03E10, 06A05}
\thanks{Work performed under the auspices of G.N.S.A.G.A. Work 
partially supported by PRIN 2012 ``Logica, Modelli e Insiemi''}

\begin{abstract}
We study a transfinite iteration 
of the ordinal Hessenberg natural sum
obtained by taking suprema at limit stages and
 show that such an iterated natural sum
differs from 
 the more usual transfinite ordinal sum
only for  a finite number of iteration steps.
The iterated natural sum of a sequence of ordinals
can be obtained as a
``mixed sum''
(in an order-theoretical sense) 
of the ordinals in the sequence; in fact, it is the largest mixed sum
which satisfies a finiteness condition, relative to the ordering
of the sequence.
We introduce other infinite natural sums which are  invariant under
permutations and show that they all coincide in the countable case.
 Finally, in the last section we use the above infinitary natural sums
in order to provide a definition of size for a well-founded tree,
together with an order-theoretical characterization in the countable case.
The proof of this order-theoretical characterization is mostly 
independent from
the rest of this paper. 
\end{abstract} 

\maketitle

\section{Introduction} \labbel{intro} 

The \emph{(Hessenberg) natural sum} $\alpha\+ \beta $ 
of two ordinals $\alpha$ and $\beta$ 
can be defined 
by expressing $\alpha$ and $\beta$
in Cantor normal form and summing linearly. 
Further details shall be given below.
In contrast with the more usual
ordinal sum, 
the resulting natural operation $\+$ is commutative, associative and cancellative.
The definition can be obviously extended
to deal with a finite sum of ordinals; otherwise, using the fact 
that the binary operation $\+$
is commutative and associative, we get
 no ambiguity in defining the natural sum of a  finite 
sequence of ordinals.

An infinitary version of the natural sum 
has appeared in
Wang   \cite{W} and
 V\"a\"an\"anen and Wang   
\cite{VW}
in the countable case
and we have  extensively studied it in \cite{w},
in particular, providing some order-theoretical characterization. 

Here we extend the above countable natural sum
to the transfinite. Expanding on  the countable case, we take suprema
at limit stages and the natural sum at successor steps.
There are similarities with the 
countable case: the transfinitely iterated natural sum 
can be computed in a way similar to the more usual transfinite ordinal sum,
except just for a finite number of  steps, in which we
should take  the finite natural sum in place of the ordinal sum.
Moreover, in the same spirit of 
\cite{w}, the iterated natural sum  has an order-theoretical characterization:
it is the largest mixed sum
 which satisfies a finiteness condition.

On the other hand, in the uncountable case, 
the iterated natural sum 
turns out to be not invariant under permutations.
In particular, the above mentioned order theoretical characterization
depends on the ordering of the ordinals in the sequence.
Some possible definitions of invariant infinitary operations
will be  given in Section \ref{furth}.
 Significantly, 
all these operations coincide in the case of $ \omega$-indexed sequences
of ordinals; this fact is  not completely trivial, 
indeed the corresponding result would be false in the case
of infinitary operations associated with 
the more usual ordinal sum. 

Finally, we use the above infinitary natural sums
in order to provide some definitions of size for well-founded trees,
together with an order-theoretical characterization in the countable case.
This is presented in Section \ref{not}.
For the most part, Section \ref{not} can be read independently from
the rest of this paper; some familiarity
  with \cite{w} is sufficient.

As a final comment, it should be mentioned that, quite remarkably, 
the special case of the iterated natural sum in which all 
summands are equal has been introduced more than a century ago 
by  Jacobsthal \cite{J} in 1909.
 See Altman \cite{A} for further details, references and
generalizations. To the best of our knowledge,
the general case of arbitrary summands has never
been considered before \cite{VW, W}, but it should be mentioned
that the literature on the subject is so vast that
a thorough check is virtually impossible.

\section{Preliminaries} \labbel{prel} 

Recall that every nonzero ordinal $ \alpha $ can be 
expressed in \emph{Cantor normal form} in
a unique way  as follows. 
\begin{equation*} \labbel{cantor}   
 \alpha  =
 \omega ^ {\xi_k} r_k + \omega ^ {\xi _{k-1}}r _{k-1}    +
\dots
+ \omega ^ {\xi_1} r_1 + \omega ^ {\xi_0}r_0  
 \end{equation*}
  for some  integers 
$k \geq 0$, $r_k, \dots, r_0 >0$ and ordinals
$ \xi _k > \xi _{k-1} > \dots > \xi_1 > \xi_0  $.
The ordinal $0$ can be considered as an ``empty'' sum.  
Here sums, products and exponentiations are always considered in
 the ordinal sense.

\begin{definition} \labbel{natsum}
The natural sum $\alpha \+ \beta $ of two ordinals 
$\alpha$ and $\beta$ 
is the only operation
satisfying
\begin{equation*}    
 \alpha \+ \beta  =
 \omega ^ {\xi_k} (r_k + s_k) +
\dots
+ \omega ^ {\xi_1} (r_1+s_1) + \omega ^ {\xi_0}(r_0 + s_0) 
 \end{equation*}
whenever
\begin{equation*}
\begin{split}   
 \alpha   =
 \omega ^ {\xi_k} r_k  +
\dots
+ \omega ^ {\xi_1} r_1 + \omega ^ {\xi_0}r_0\\  
 \beta  =
 \omega ^ {\xi_k}  s_k +
\dots
+ \omega ^ {\xi_1} s_1 + \omega ^ {\xi_0} s_0 
 \end{split} 
\end{equation*}
and $k , r_k, \dots, r_0, s_k, \dots, s_0  < \omega$, 
$ \xi _k > \dots > \xi_1 > \xi_0  $.
 \end{definition}   

The definition is justified by the fact that we can represent every
nonzero $\alpha$ and $\beta$ in Cantor normal form and then insert
some more null coefficients for convenience
in order to make the indices match.
The null coefficients do not affect $\alpha$ and $\beta$,
hence the definition is well-posed.
See, e.~g., Bachmann \cite{Bac} and Sierpi\'nski \cite{Sier} for further details.

Let $\alpha$ and 
$\eta$ be ordinals,
and express $\alpha$ in Cantor normal 
form as
$
 \omega ^ {\xi_k} r_k + 
\dots
+ \omega ^ {\xi_0}r_0$.
The ordinal $\alpha^{\restriction  \eta}$, in words,  
\emph{$\alpha$ truncated at the $\eta^{th}$ exponent of $ \omega$}, 
is
$
 \omega ^ {\xi_k} r_k + 
\dots
+ \omega ^ {\xi_ \ell}r_\ell$,
where $\ell$ is the smallest index
such that $\xi_\ell \geq \eta$.   
We set
$\alpha^{\restriction  \eta}=0$
in case that $\alpha < \omega ^ \eta$.  

\begin{proposition} \labbel{facts}
Let $\alpha$, $\beta$, $\gamma$,
$\eta$ be ordinals and $ r< \omega$. 
  \begin{enumerate}[(1)]    
\item
The operation $\+ $ is commutative, associative, both left and right cancellative
and strictly monotone in both arguments.
\item
$ \sup \{ \alpha, \beta \} \leq  \alpha + \beta \leq \alpha \+ \beta $.
\item \labbel{factsex4}
If $\beta < \omega ^ \eta $, then  $\alpha \+ \beta < \alpha  + \omega ^ \eta $.
\item \labbel{factsn1n2}
If  $\alpha \+ \beta \geq \omega ^ \eta r $,
then there are $r_1, r_2 < \omega $
such that  $r_1 + r_2= r$,
$\alpha  \geq \omega ^ \eta r_1 $
and $\beta \geq \omega ^ \eta r_2 $.
\item\labbel{facts5}
$\alpha+ \beta = \alpha^{\restriction  \eta}+ \beta =
\alpha^{\restriction  \eta}\+ \beta $,
where $\eta$ is the leading exponent in the 
Cantor normal expression of $\beta$. 
\item \labbel{facts3op}
$( \alpha + \beta ) \+ \gamma \geq 
\alpha + (\beta  \+ \gamma).$
\item \labbel{facts7u}
$ \alpha \+ (\beta  + \gamma) \geq 
(\alpha \+ \beta) + \gamma.$
  \end{enumerate}
 \end{proposition}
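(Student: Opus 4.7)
My plan is to work directly from the Cantor normal form definition of $\+$ throughout, treating item (5) as the bridge between the ordinal and the natural sum that powers most of the others. I would prove (1) and (5) first and then use them repeatedly. For (1), all four properties (commutativity, associativity, two-sided cancellativity, strict monotonicity) descend coefficient-by-coefficient from the corresponding properties of $(\omega,+)$, once one pads the Cantor expansions of the operands with zero coefficients over a common list of exponents. For (5), write $\alpha = \alpha^{\restriction \eta} + \alpha'$ with $\alpha' < \omega^\eta$: the leading term $\omega^\eta s_k$ of $\beta$ absorbs $\alpha'$ in the ordinal sense, giving $\alpha + \beta = \alpha^{\restriction \eta} + \beta$; and since every exponent of $\alpha^{\restriction \eta}$ is $\geq \eta$ while every exponent of $\beta$ is $\leq \eta$, the ordinal sum $\alpha^{\restriction \eta} + \beta$ and the natural sum $\alpha^{\restriction \eta} \+ \beta$ produce exactly the same Cantor expansion.

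With (5) in hand, (2) follows from $\alpha + \beta = \alpha^{\restriction \eta} \+ \beta \leq \alpha \+ \beta$ (by monotonicity, since $\alpha^{\restriction \eta} \leq \alpha$), combined with the standard $\sup\{\alpha,\beta\} \leq \alpha + \beta$. For (3), since $\beta < \omega^\eta$, the coefficient of $\omega^\xi$ for $\xi \geq \eta$ in $\alpha \+ \beta$ equals the corresponding coefficient in $\alpha$, so the $\omega^{\geq \eta}$-part of $\alpha \+ \beta$ is $\alpha^{\restriction \eta}$ and the remaining part is some ordinal $< \omega^\eta$; thus $\alpha \+ \beta < \alpha^{\restriction \eta} + \omega^\eta$, which by (5) equals $\alpha + \omega^\eta$. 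For (4), I would pad $\alpha$ and $\beta$ on a common exponent list: if some exponent $\xi > \eta$ carries a nonzero coefficient in, say, $\alpha$, then $\alpha \geq \omega^{\xi} \geq \omega^\eta r$ for any finite $r$ and one takes $(r_1,r_2) = (r,0)$; otherwise the only relevant exponent is $\eta$ itself, and the coefficient of $\omega^\eta$ in $\alpha \+ \beta$ is the sum of the corresponding coefficients in $\alpha$ and $\beta$, whose total is $\geq r$, so any distribution yields suitable $r_1, r_2$.

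For (6), let $\eta_0$ and $\eta_1$ be the leading exponents of $\beta$ and of $\beta \+ \gamma$; since $\eta_0 \leq \eta_1$, one has $\alpha^{\restriction \eta_0} \geq \alpha^{\restriction \eta_1}$. Applying (5) on each side and using commutativity and associativity of $\+$, the inequality reduces to $\alpha^{\restriction \eta_0} \+ \beta \+ \gamma \geq \alpha^{\restriction \eta_1} \+ \beta \+ \gamma$, which is immediate from monotonicity of $\+$. Item (7) is dual: with $\eta$ the leading exponent of $\gamma$, (5) reduces the claim to $\alpha \+ \beta^{\restriction \eta} \geq (\alpha \+ \beta)^{\restriction \eta}$, and the identity $(\alpha \+ \beta)^{\restriction \eta} = \alpha^{\restriction \eta} \+ \beta^{\restriction \eta}$ (immediate from the coefficient-wise definition) together with $\alpha \geq \alpha^{\restriction \eta}$ closes the argument. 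The only spot requiring genuine care is the case split in (4), where one has to juggle the truncations and the splitting constants simultaneously; every other item is a direct unwinding once (5) is available.
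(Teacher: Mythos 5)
Your proposal is correct and follows essentially the same route as the paper: everything is read off the Cantor normal forms, with item (5) (absorption of the part of $\alpha$ below $\omega^\eta$ by the leading term of $\beta$) serving as the workhorse for (2), (3), (6) and (7), exactly as in the paper's own proof. One small phrasing slip in (4): it is not true that \emph{any} decomposition $r_1+r_2=r$ works; one must choose $r_1\le s$ and $r_2\le t$, where $s,t$ are the coefficients of $\omega^\eta$ in $\alpha,\beta$, which is possible precisely because $s+t\ge r$.
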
 

\begin{proof}
Everything follows  easily by expressing
the relevant ordinals in Cantor normal form and applying the
definitions. See \cite{Bac,Sier} for details.
A proof of  (\ref{factsex4}) can be found in \cite[Proposition 2.2(4)]{w}. 

In order to prove 
(\ref{factsn1n2}), 
express both $\alpha$ and $\beta$ in Cantor normal form.
If the largest power of $ \omega$ in the expression of, say, $\alpha$ is
$> \eta$, then  $\alpha \geq \omega ^ \eta r $ and we are done,
taking $r_2=0$.
The same holds for $\beta$.
Otherwise, 
$\alpha  = \omega ^ \eta s + \dots $
and $\beta = \omega ^ \eta t + \dots$
(possibly, $s=0$ or $t=0$).
By the definition of $\alpha \+ \beta$, and  
since 
$\alpha \+ \beta \geq \omega ^ \eta r$,
we get
$s+t \geq r$,
hence we can find  
$r_1 $
and
$r_2 $
as desired.

(\ref{facts5}) When computing
$\alpha+ \beta $, 
all the summands $< \omega ^\eta $ in the Cantor normal expression
of $\alpha$ are absorbed by the leading term of the normal expression of  $\beta$.

(\ref{facts3op}) By (\ref{facts5}) and associativity of $\+$, we get
 $( \alpha + \beta ) \+ \gamma =
\alpha^{\restriction  \eta}\+ \beta \+ \gamma $,
where $\eta$ is the leading exponent in the normal expression of $\beta$.
In the same way,
$\alpha + (\beta  \+ \gamma) =
\alpha^{\restriction  \xi}\+ \beta \+ \gamma $,
where $\xi$ is the leading exponent in the normal expression of $\beta \+ \gamma $.
Now trivially
$\eta \leq \xi $,
hence
$\alpha^{\restriction  \eta} \geq \alpha^{\restriction  \xi}$,
from which 
(\ref{facts3op}) follows. 

(7) By (\ref{facts5})  and associativity of $\+$, we get
$ \alpha \+ (\beta  + \gamma )
=
\alpha \+ \beta^{\restriction  \eta}  \+ \gamma
$,
where $\eta$ is the leading exponent in the normal expression of $\gamma $.
On the other hand,
$ 
(\alpha \+ \beta)  + \gamma
=
(\alpha \+ \beta)^{\restriction  \eta}  \+ \gamma
=
\alpha^{\restriction  \eta}  \+ \beta^{\restriction  \eta}  \+ \gamma
$,
thus the inequality follows from monotonicity of $\+$.
\end{proof}

\section{A transfinite iteration of the natural sum} \labbel{tins} 

\subsection*{Definition and notations} 
A countable iteration of the natural sum
has been considered in  V\"a\"an\"anen and Wang   
\cite{VW},
by taking the supremum at the limit stage.
This countable operation has
been extensively studied 
in \cite{w}. 
The construction 
can be  iterated without special adjustments
 through
the transfinite.

\begin{definition} \labbel{iterated}
If $( \alpha_ \gamma ) _{ \gamma < \bar{\varepsilon}  } $
is a sequence of ordinals, we define the
\emph{iterated natural sum}
$\sumh _{ \gamma < \delta } \alpha _ \gamma $,
for every $\delta \leq \bar{\varepsilon} $,  
inductively as follows.
 \begin{align}\labbel{trans}
\sumh _{ \gamma < 0 }  \alpha _ \gamma & =  0\\ 
\labbel{transb}
\sumh _{ \gamma < \delta+1 } \alpha _ \gamma & =
\left(\sumh _{ \gamma < \delta } \alpha _ \gamma \right)
\+
\alpha_ \delta\\
\labbel{transc}
 \sumh _{ \gamma < \delta } \alpha _ \gamma & =
\sup _{ \delta ' < \delta }\  \sumh _{ \gamma < \delta' } \alpha _ \gamma
\quad \text{ for $\delta$ limit}  
\end{align}

Clearly, as far as $\bar{\varepsilon} \geq \delta $,
the definition of  $\sumh _{ \gamma < \delta }\alpha _ \gamma $
does not depend on $\bar{\varepsilon}$.
Moreover, it does not depend on the values assumed by 
the $\alpha_ \gamma $'s, for $\gamma\geq \delta $.
This shows that there is no ambiguity in the notation. 
\end{definition}   

In the particular case when $\delta= \omega $, the operation
$\sumh _{ \gamma < \omega  } \alpha _ \gamma $ has been 
considered in \cite{VW} under the same notation and studied
in \cite{w}  under the notation
$\nsum _{ \gamma < \omega  } \alpha _ \gamma $.
We are using here a  different notation 
since we want to reserve the symbol $\nsum$
for some operation which is invariant under permutations.
In fact,  if $ \delta = \omega$, then  $\sumh$ is  invariant
while, if $ \delta > \omega$, then  $\sumh$ is not invariant.
See Section \ref{furth} for further details;
there we shall also introduce  some related 
operations which are indeed invariant under permutations.

\begin{notation} \labbel{sh} 
If the sequence of the $\alpha_ \gamma $'s 
in Definition \ref{iterated} is understood,
we shall sometimes simply write  
$\sh_ \delta $ in place of  
$\sumh _{ \gamma < \delta } \alpha _ \gamma $.
In a few cases, 
we shall also need a shorthand for 
the partial sums of the
 usual iterated ordinal sum
$\sum _{ \gamma < \delta } \alpha _ \gamma $.
This will be abbreviated as
$ \snorm _ \delta $.
I.~e.,
in the recursive definition of 
$ \snorm _ \delta $, we use
$ \snorm _{ \delta +1} = \snorm_ \delta + \alpha _\delta $
in place of equation \eqref{transb} above. 
 
As an additional shorthand, 
particularly useful to simplify notations in  proofs,
for $\delta' \leq  \delta $, 
we let 
$ \sh_{ [ \delta' , \delta )} $  denote 
$\sumh _{ \delta ' \leq  \gamma < \delta } \alpha _ \gamma $, 
and, similarly, 
$ \snorm _{ [ \delta' , \delta )} 
= \sum _{ \delta ' \leq  \gamma < \delta } \alpha _ \gamma $.
As usual, the notation
$\sumh _{ \delta ' \leq  \gamma < \delta } \alpha _ \gamma $
is justified by the fact
that, if $\delta' \leq \delta $,
then there is a unique ordinal
$ \varepsilon $ such that   
$\delta' + \varepsilon  =  \delta $,
and then the  formal definition of 
$\sumh _{ \delta ' \leq  \gamma < \delta } \alpha _ \gamma $
is
$\sumh _{ \varepsilon '   < \varepsilon  } \alpha _{ \delta ' + \varepsilon' } $.
All this is standard; see 
the mentioned books \cite{Bac,Sier}
for further details, e.~g.,  in the case of the ordinary transfinite sum.
 \end{notation}

\subsection*{Some preliminary lemmas}

Throughout the present subsection
we fix a sequence  $( \alpha_ \gamma ) _{ \gamma < \bar{\varepsilon}  } $
of ordinals,
where 
$\bar{\varepsilon}$
is some sufficiently large ordinal.

\begin{proposition} \labbel{factsww}
Let $\delta$, $\alpha_ \gamma $, $\beta_ \gamma $ 
($\gamma < \delta $)
be ordinals. Then the following statements hold. 
  \begin{enumerate}[(1)]    
\item
$\sum_{ \gamma < \delta   } \alpha _ \gamma 
\leq
\sumh _{ \gamma < \delta   } \alpha _ \gamma $
\item
If $\beta_ \gamma  \leq \alpha _ \gamma $, for every $ \gamma < \delta $,
then
$\sumh _{ \gamma < \delta   } \beta  _ \gamma \leq
 \sumh _{ \gamma < \delta   } \alpha _ \gamma$    
\item
If $ \delta' < \delta $, then
$ \sumh _{ \gamma < \delta '  } \alpha _ \gamma
 \leq
  \sumh _{ \gamma < \delta   } \alpha _ \gamma$; equality holds if and only if 
$\alpha_ \gamma = 0$,
for every $\gamma$ with $ \delta ' \leq \gamma < \delta $.   
\item
If $(\alpha _{f( \varepsilon )}) _{ \varepsilon < \delta' } $
is the subsequence of 
$(\alpha _{ \gamma }) _{ \gamma  < \delta } $
consisting exactly of the nonzero elements of the sequence, then
$  \sumh _{ \gamma < \delta   } \alpha _ \gamma =
  \sumh _{ \varepsilon < \delta ' } \alpha _{f( \varepsilon )} $. 
\item  
If $\delta'' < \delta $, $k$ is finite
and $ \delta '' \leq  \gamma _0, \gamma _1, \dots, \gamma _ {k-1} < \delta $, 
 then
$  \sumh _{ \gamma < \delta   } \alpha _ \gamma \geq
 \left( \sumh _{ \gamma  < \delta '' } \alpha _ \gamma \right) 
\+  \alpha  _{ \gamma _0} \+ \alpha  _{ \gamma _1} \+ \dots
 \+ \alpha  _{ \gamma _{k-1}}
$. 
 \end{enumerate}
 \end{proposition}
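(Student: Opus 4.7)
\medskip

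\noindent\textbf{Plan.} Each of the five clauses of Proposition \ref{factsww} is naturally proved by transfinite induction on $\delta$, with one-line appeals to Proposition \ref{facts} handling the algebraic step at successors; the main subtlety is the bookkeeping in clause (4).

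For clause (1), I would induct on $\delta$. The base and limit cases are immediate (in the limit, the supremum preserves the weak inequality). At a successor $\delta+1$, the induction hypothesis plus $\alpha+\beta\leq\alpha\+\beta$ from Proposition~\ref{facts}(2) gives
\[
\textstyle\sum_{\gamma<\delta+1}\alpha_\gamma
 = \bigl(\sum_{\gamma<\delta}\alpha_\gamma\bigr)+\alpha_\delta
 \leq \sh_\delta + \alpha_\delta
 \leq \sh_\delta \+ \alpha_\delta
 = \sh_{\delta+1}.
\]
Clause (2) is an identical induction, invoking monotonicity of $\+$ from Proposition~\ref{facts}(1) at the successor step. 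For the inequality in clause (3), I fix $\delta'$ and induct on $\delta > \delta'$; the successor step is $\sh_\delta \leq \sh_\delta \+ \alpha_\delta = \sh_{\delta+1}$ (Proposition~\ref{facts}(2)), and the limit step is a sup. For the equality characterization, the ``if'' direction follows because adding $0$ via $\+$ is the identity (so the induction gives equality throughout). For ``only if'', suppose some $\alpha_{\bar\gamma}>0$ with $\delta'\leq\bar\gamma<\delta$; then $\sh_{\bar\gamma+1} = \sh_{\bar\gamma}\+\alpha_{\bar\gamma} > \sh_{\bar\gamma}\geq \sh_{\delta'}$ by strict monotonicity of $\+$ (Proposition~\ref{facts}(1)), and $\sh_\delta \geq \sh_{\bar\gamma+1}$ by the inequality just proved, yielding $\sh_\delta>\sh_{\delta'}$.

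Clause (4) is the delicate one. Let $f\colon\delta'\to\delta$ be the order-preserving enumeration of $\{\gamma<\delta:\alpha_\gamma\neq 0\}$, and define $g(\gamma)=\mathrm{otp}\{\beta<\gamma:\alpha_\beta\neq 0\}$, so that $g\colon\delta\to\delta'+1$ is nondecreasing and continuous at limits (by definition of the order type). Writing $\sh'_{\varepsilon}=\sumh_{\varepsilon''<\varepsilon}\alpha_{f(\varepsilon'')}$, I claim $\sh_\gamma=\sh'_{g(\gamma)}$ for every $\gamma\leq\delta$, by induction on $\gamma$. Successor with $\alpha_{\gamma}=0$: $g$ does not change and $\sh_{\gamma+1}=\sh_\gamma\+0=\sh_\gamma$. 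Successor with $\alpha_{\gamma}\neq 0$: then $f(g(\gamma))=\gamma$ and $g(\gamma+1)=g(\gamma)+1$, so $\sh_{\gamma+1}=\sh_\gamma\+\alpha_\gamma=\sh'_{g(\gamma)}\+\alpha_{f(g(\gamma))}=\sh'_{g(\gamma+1)}$. At a limit $\gamma$, continuity of $g$ (so $\{g(\gamma'):\gamma'<\gamma\}$ is cofinal in $g(\gamma)$ unless $g(\gamma)$ is a successor that is already attained) together with the inductive equality allows me to match the two suprema. Writing this limit case carefully — distinguishing whether $g(\gamma)$ is $0$, a successor, or a limit — is the one place where real attention is required.

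Finally, for clause (5), the assertion tacitly requires the $\gamma_i$ to be distinct, since otherwise the right-hand side can exceed $\sh_\delta$. By commutativity of $\+$ (Proposition~\ref{facts}(1)) I may reorder them so that $\gamma_0<\dots<\gamma_{k-1}$, and induct on $k$. The case $k=0$ is clause (3). For $k\geq 1$, apply clause (3) to get $\sh_\delta\geq \sh_{\gamma_{k-1}+1}=\sh_{\gamma_{k-1}}\+\alpha_{\gamma_{k-1}}$, use the inductive hypothesis inside $\sh_{\gamma_{k-1}}$ (valid since $\gamma_{k-1}>\gamma_0,\dots,\gamma_{k-2}\geq\delta''$), and $\+$-monotonicity. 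The main obstacle in the whole proposition is simply clause (4)'s limit case; the other clauses are routine transfinite inductions on $\delta$.
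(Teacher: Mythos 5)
Your proof is correct. For clauses (1)--(4) it follows the same route as the paper, which simply states that (1)--(3) are immediate from Proposition~\ref{facts} and Definition~\ref{iterated} and that (4) is proved by induction on $\delta$ using (3); your write-up supplies exactly the details the paper omits, including the only genuinely delicate point, the limit case of (4), which you handle correctly via the continuity of the order-type function $g$ together with the monotonicity from (3) when the supremum is not attained. Where you genuinely diverge is clause (5): the paper deduces it by replacing every $\alpha_\gamma$ with $\gamma \geq \delta''$ and $\gamma \notin \{\gamma_0,\dots,\gamma_{k-1}\}$ by $0$, then applying (2) to compare with the original sequence and (4) to collapse the modified sequence to a sum of length $\delta''+k$, which is then unfolded by $k$ applications of the successor clause. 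You instead sort the $\gamma_i$ increasingly and induct on $k$, peeling off $\alpha_{\gamma_{k-1}}$ via (3) and strict monotonicity of $\+$. Both arguments are equally elementary; the paper's has the mild advantage of reusing (4) as a black box, while yours avoids introducing an auxiliary sequence. Your observation that the $\gamma_i$ must be tacitly assumed distinct is correct and applies equally to the paper's own argument (which needs $\delta' = \delta'' + k$, i.e.\ that the set $\{\gamma_0,\dots,\gamma_{k-1}\}$ has exactly $k$ elements).
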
 

\begin{proof}
(1)-(3) are immediate from Proposition \ref{facts}
and Definition \ref{iterated}.
(4) is proved by induction on $\delta$, using (3).

In order to prove (5),
let 
$( \alpha' _ \gamma ) _{ \gamma < \delta   } $
be the sequence obtained from $( \alpha _ \gamma ) _{ \gamma < \delta  } $,
by changing to $0$ all elements except for those elements which either
have index $< \delta '' $ or have index in the set
$\{  \gamma _0, \dots, \gamma _{k-1} \}$.
Then, by (2), and applying (4) to the sequence  
$( \alpha' _ \gamma ) _{ \gamma < \delta   } $, we get
$\sumh _{ \gamma < \delta   } \alpha   _ \gamma \geq
 \sumh _{ \gamma < \delta   } \alpha' _ \gamma =
  \sumh _{ \varepsilon < \delta ' } \alpha _{f( \varepsilon )}=
 \left( \sumh _{ \varepsilon < \delta '' } \alpha _ \gamma \right) 
\+  \alpha  _{ \gamma _0} \+ \alpha  _{ \gamma _1} \+ \dots 
\+ \alpha  _{ \gamma _{k-1}}
$, 
where $\delta'$ is given by (4),
we iterate clause (\ref{transb}) in Definition \ref{iterated}
a finite number of times, 
 and we are using the fact that
 $\delta'= \delta '' + k$.    
\end{proof}

\begin{lemma} \labbel{lem1}
Suppose that  $\delta' \leq \delta $.
Then
\begin{equation*}\labbel{min}
\sumh _{ \gamma < \delta' } \alpha_ \gamma
\+
\sumh _{ \delta' \leq \gamma < \delta } \alpha_ \gamma  
\geq
\sumh _{ \gamma < \delta } \alpha_ \gamma 
\geq
\sumh _{ \gamma < \delta' } \alpha_ \gamma
+
\sumh _{ \delta' \leq \gamma < \delta } \alpha_ \gamma  
  \end{equation*}    
 \end{lemma}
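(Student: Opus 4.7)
The plan is to fix $\delta'$ and proceed by transfinite induction on $\delta$, or equivalently, on the unique $\varepsilon$ with $\delta' + \varepsilon = \delta$ (as recalled in Notation \ref{sh}). Throughout, I abbreviate $\sh_\eta = \sumh_{\gamma < \eta} \alpha_\gamma$ and $\sh_{[\delta',\eta)} = \sumh_{\delta' \leq \gamma < \eta} \alpha_\gamma$. The base case $\delta = \delta'$ is trivial since the middle term in the $[\delta',\delta)$ sum is $0$.

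For the successor step, write $\delta = \bar\delta + 1$ with $\bar\delta \geq \delta'$. Using clause \eqref{transb} we have $\sh_\delta = \sh_{\bar\delta} \+ \alpha_{\bar\delta}$ and $\sh_{[\delta',\delta)} = \sh_{[\delta',\bar\delta)} \+ \alpha_{\bar\delta}$. For the upper bound, the inductive hypothesis gives $\sh_{\bar\delta} \leq \sh_{\delta'} \+ \sh_{[\delta',\bar\delta)}$, and $\+$-monotonicity (Proposition \ref{facts}(1)) together with commutativity/associativity yields $\sh_\delta \leq \sh_{\delta'} \+ \sh_{[\delta',\bar\delta)} \+ \alpha_{\bar\delta} = \sh_{\delta'} \+ \sh_{[\delta',\delta)}$. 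For the lower bound, induction gives $\sh_{\bar\delta} \geq \sh_{\delta'} + \sh_{[\delta',\bar\delta)}$, so $\sh_\delta \geq (\sh_{\delta'} + \sh_{[\delta',\bar\delta)}) \+ \alpha_{\bar\delta}$, and applying Proposition \ref{facts}(\ref{facts3op}) moves the $\+$ inside: $(\sh_{\delta'} + \sh_{[\delta',\bar\delta)}) \+ \alpha_{\bar\delta} \geq \sh_{\delta'} + (\sh_{[\delta',\bar\delta)} \+ \alpha_{\bar\delta}) = \sh_{\delta'} + \sh_{[\delta',\delta)}$.

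For the limit step, $\delta$ is a limit (with $\delta > \delta'$) and $\sh_\delta = \sup_{\bar\delta < \delta} \sh_{\bar\delta}$ by \eqref{transc}; a corresponding identity $\sh_{[\delta',\delta)} = \sup_{\delta' \leq \bar\delta < \delta} \sh_{[\delta',\bar\delta)}$ follows by unravelling the definition via Notation \ref{sh}. The upper bound is immediate: for every $\bar\delta$ with $\delta' \leq \bar\delta < \delta$, the inductive hypothesis combined with monotonicity of $\+$ and of $\sh_{[\delta',\cdot)}$ (Proposition \ref{factsww}(3)) gives $\sh_{\bar\delta} \leq \sh_{\delta'} \+ \sh_{[\delta',\bar\delta)} \leq \sh_{\delta'} \+ \sh_{[\delta',\delta)}$, and for $\bar\delta < \delta'$ the bound is even more trivial; taking the supremum over $\bar\delta$ then yields $\sh_\delta \leq \sh_{\delta'} \+ \sh_{[\delta',\delta)}$.

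The main obstacle is the lower bound in the limit step, because natural sum is not continuous on the right (so one cannot simply commute $\+$ with $\sup$). Here is where ordinary ordinal addition $+$ saves the day: from the inductive hypothesis one has $\sh_{\bar\delta} \geq \sh_{\delta'} + \sh_{[\delta',\bar\delta)}$ for every $\bar\delta$ with $\delta' \leq \bar\delta < \delta$, and since ordinal addition \emph{is} continuous in its right argument, $\sup_{\bar\delta} \bigl(\sh_{\delta'} + \sh_{[\delta',\bar\delta)}\bigr) = \sh_{\delta'} + \sup_{\bar\delta} \sh_{[\delta',\bar\delta)} = \sh_{\delta'} + \sh_{[\delta',\delta)}$. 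Therefore $\sh_\delta = \sup_{\bar\delta} \sh_{\bar\delta} \geq \sh_{\delta'} + \sh_{[\delta',\delta)}$, completing the induction.
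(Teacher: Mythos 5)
Your proof is correct and follows essentially the same route as the paper's: the same induction on $\delta$ with $\delta'$ fixed, using associativity and monotonicity of $\+$ for the left-hand inequality, Proposition \ref{facts}(\ref{facts3op}) at successor stages of the right-hand inequality, and right continuity of ordinal addition at limit stages. The only cosmetic difference is that you explicitly flag the identity $\sh_{[\delta',\delta)}=\sup_{\delta'\leq\bar\delta<\delta}\sh_{[\delta',\bar\delta)}$, which the paper uses implicitly.
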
 

\begin{proof}
The proof is obtained by leaving $\delta'$ fixed and performing 
an induction on
$\delta \geq \delta' $, using Proposition \ref{facts}. 
Notice that, in the shorthand introduced in  \ref{sh},
the conclusion of the lemma reads 
$\sh _{\delta' } 
\+
\sh _{[ \delta', \delta) }
\geq
\sh _{  \delta } 
\geq
\sh _{\delta' } 
+
\sh _{[ \delta', \delta) }$.

We first prove the left-hand inequality.
If $\delta= \delta' $, then
 $\sh _{[ \delta', \delta) }$ is an empty sum with value $0$
and the result is trivial.

If $\delta$  is successor, say, $\delta= \delta'' +1 $, 
then, by the inductive hypothesis,
$\sh _{\delta' } 
\+
\sh _{[ \delta', \delta'') }
\geq
\sh _{  \delta'' }$.
Then, by definition,
$\sh _{\delta' } 
\+
\sh _{[ \delta', \delta) }
=
\sh _{\delta' } 
\+
(\sh _{[ \delta', \delta'') }
 \+ \alpha _{ \delta ''})
=
(\sh _{\delta' } 
\+
\sh _{[ \delta', \delta'') })
 \+ \alpha _{ \delta ''}
\geq
\sh _{  \delta'' }  \+ \alpha _{ \delta ''} 
=
\sh _{  \delta }$,
by associativity \ref{facts}(1).

If $\delta$ is limit, 
then, by Proposition \ref{factsww}(3)  and  the inductive hypothesis,
 $\sh _{  \delta' } 
\+
\sh _{ [\delta' , \delta)} 
\geq
\sh _{  \delta' } 
\+
\sh _{ [\delta' , \delta^* )}  
\geq 
\sh _{ \delta^* }$,
for every $\delta^*$ with  $ \delta ' \leq \delta^* < \delta $. 
Hence
 $\sh _{  \delta' } 
\+
\sh _{ [\delta' , \delta)}
\geq
\sup _{ \delta ^* < \delta } \sh _{ \delta^* }
=
\sh _{  \delta} $.

Now let us prove the right-hand inequality.
Again, this is trivial if $\delta= \delta '$.

If $\delta$  is successor, say, $\delta= \delta'' +1 $, 
then, by the inductive hypothesis,
$\sh _{  \delta'' } \geq
\sh _{\delta' } 
+
\sh _{[ \delta', \delta'') }$.
Then, by definition and using Proposition \ref{facts}(\ref{facts3op}), 
$\sh _{  \delta } 
=
\sh _{  \delta'' }  \+ \alpha _{ \delta ''} 
\geq
(\sh _{\delta' } 
+
\sh _{[ \delta', \delta'') })
 \+ \alpha _{ \delta ''}
\geq
\sh _{\delta' } 
+
(\sh _{[ \delta', \delta'') }
 \+ \alpha _{ \delta ''})
=
\sh _{\delta' } 
+
\sh _{[ \delta', \delta) }
$.

If $\delta$ is limit, 
then, by Proposition \ref{factsww}(3) and the inductive hypothesis,
 $\sh _{ \delta} \geq \sh _{ \delta^* }  \geq
\sh _{  \delta' } 
+
\sh _{ [\delta' , \delta^* )}  $,
for every $\delta^*$ with  $ \delta ' \leq \delta^* < \delta $. 
Hence
 $\sh _{ \delta} \geq 
\sup _{ \delta^* < \delta } (\sh _{  \delta' } 
+
 \sh _{ [\delta' , \delta^* )} )
=
\sh _{  \delta' } 
+
\sup _{ \delta^* < \delta } \sh _{ [\delta' , \delta^* )} 
=
\sh _{  \delta' } 
+
 \sh _{ [\delta' , \delta)} $,
 by right continuity of $+$.
 \end{proof}   
 
\begin{remark} \labbel{rmklem1}
Notice that the inequalities  in Lemma \ref{lem1}
might be strict.
For example, taking $\delta= \omega $, $\delta'=1$
and $\alpha_ \gamma =1$, for every $\gamma < \omega $,
we get
$
1 
\+ 
\sumh _{ 1 \leq \gamma < \delta } \alpha_ \gamma
= \omega +1 >  \omega  =
\sumh _{ \gamma < \delta } \alpha_ \gamma
  $, showing that the left-hand inequality might be strict.

On the other hand, take 
$\delta= 2$,
$\delta'=1$,
$\alpha_0= 1$   
and 
$\alpha_1= \omega $,
getting
$
\sumh _{ \gamma < \delta } \alpha_ \gamma
=1 
\+ \omega 
= \omega +1 >  \omega  = 1+ \omega =
1+
\sumh _{ 1 \leq \gamma < \delta } \alpha_ \gamma$.
 \end{remark}

\begin{lemma} \labbel{lm}
If $ n < \omega $ and
$\sumh _{ \gamma < \delta } \alpha_ \gamma \geq \omega ^ \xi n $,
then
$\sum _{ \gamma < \delta } \alpha_ \gamma \geq \omega ^ \xi n $.  
 \end{lemma}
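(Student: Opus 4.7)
The plan is a three-level transfinite induction: outer on $\xi$, inner on $\delta$, and, at each pair $(\xi,\delta)$, a brief sub-induction on $n$; at every step the inductive hypothesis is taken to be universally quantified over all sequences $(\alpha_\gamma)$. The case $\xi = 0$ is essentially trivial---if $\snorm_\delta$ is infinite it exceeds any finite $n$ automatically, and if finite then only finitely many $\alpha_\gamma$ are nonzero and all are finite, so $\snorm_\delta = \sh_\delta$. The successor step $\delta = \delta'+1$ is handled uniformly by Proposition \ref{facts}(\ref{factsn1n2}): applied to $\sh_{\delta'} \+ \alpha_{\delta'} \geq \omega^\xi n$ it produces a decomposition $n = r_1 + r_2$ with $\sh_{\delta'} \geq \omega^\xi r_1$ and $\alpha_{\delta'} \geq \omega^\xi r_2$; the IH at $\delta'$ upgrades the first bound to $\snorm_{\delta'} \geq \omega^\xi r_1$, after which monotonicity of $+$ and left-distributivity give $\snorm_\delta = \snorm_{\delta'} + \alpha_{\delta'} \geq \omega^\xi r_1 + \omega^\xi r_2 = \omega^\xi n$.

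The heart of the proof is the limit case for $\delta$. If some $\delta^* < \delta$ already satisfies $\sh_{\delta^*} \geq \omega^\xi n$, the IH at $\delta^*$ suffices. Otherwise every $\sh_{\delta'}$ with $\delta' < \delta$ is strictly less than $\omega^\xi n$, so $\omega^\xi n$ is a non-attained supremum of an increasing chain, hence a limit ordinal, forcing $\xi \geq 1$. For $n = 1$ I use density: for each pair $(\xi', k)$ with $\xi' < \xi$ and $k < \omega$, pick $\delta' < \delta$ with $\sh_{\delta'} \geq \omega^{\xi'} k$, apply the outer IH at $\xi'$ to conclude $\snorm_{\delta'} \geq \omega^{\xi'} k$, and take the sup over all such pairs to obtain $\snorm_\delta \geq \omega^\xi$. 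For $n \geq 2$ I split the target as $\omega^\xi n = \omega^\xi(n-1) + \omega^\xi$: pick $\delta^* \geq 1$ with $\sh_{\delta^*} > \omega^\xi(n-1)$ (so the inner IH gives $\snorm_{\delta^*} \geq \omega^\xi(n-1)$); combine Lemma \ref{lem1} with Proposition \ref{facts}(\ref{factsn1n2}) on $\omega^\xi n \leq \sh_{\delta^*} \+ \sh_{[\delta^*,\delta)}$, using the bound $\sh_{\delta^*} < \omega^\xi n$ to force the $\omega^\xi$-coefficient on the $\delta^*$-side below $n$, and thereby obtain $\sh_{[\delta^*,\delta)} \geq \omega^\xi$; and invoke the already-proved $(\xi,1)$ case of the lemma on the shifted tail to get $\snorm_{[\delta^*,\delta)} \geq \omega^\xi$. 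Assembling via associativity of ordinary sum, $\snorm_\delta = \snorm_{\delta^*} + \snorm_{[\delta^*,\delta)} \geq \omega^\xi(n-1) + \omega^\xi = \omega^\xi n$.

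The main obstacle is the appeal to the $(\xi,1)$ case on the shifted tail: that tail has length $\varepsilon$ with $\delta^* + \varepsilon = \delta$, and $\varepsilon$ may equal $\delta$ itself (for example when $\delta = \omega$ and $\delta^*$ is finite, since $\delta^* + \omega = \omega$), so the inner IH on $\delta$ does not automatically apply. This is precisely why the sub-induction on $n$ at each $(\xi,\delta)$ is necessary: the $(\xi,1)$ case at the current $\delta$ must be established before attacking $n \geq 2$. A related minor subtlety is that the density argument for $n = 1$ must range over pairs $(\xi',k)$ with $k$ varying, not merely over $\xi' < \xi$, or else one only reaches $\omega^\xi$ when $\xi$ is a limit and misses the successor case.
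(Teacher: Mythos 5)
Your proof is correct and follows essentially the same route as the paper's: the same successor-step decomposition via Proposition \ref{facts}(\ref{factsn1n2}), the same supremum/density argument giving $\omega^\xi$ at limit $\delta$ when $n=1$, and the same splitting $\omega^\xi(n-1)+\omega^\xi$ using Lemma \ref{lem1} when $n\geq 2$. The only real difference is bookkeeping: the paper runs a single induction on the ordinal $\iota=\omega^\xi n$ (choosing a minimal counterexample $\delta$ inside), so the same-length-tail subtlety you rightly flag is absorbed by the observation that $\omega^\xi\cdot 1<\omega^\xi n$, whereas you achieve the same effect with a nested induction on $(\xi,\delta,n)$.
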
 

\begin{proof}
By induction on ordinals $\iota$
of the form 
$\omega ^ \xi n$.

The lemma is trivial if $\iota$ is either $0$ or  $1$. 
 
Suppose that 
$\iota=\omega ^ \xi n>1$,
and that the lemma is true
for all $\iota' < \iota$
of the  form $\omega ^{ \xi'} n'$.
Suppose by contradiction that the lemma is  false for 
$\iota$, and choose a counterexample for which $ \delta $ 
is minimal.
Recalling the notation from
\ref{sh}, 
 we are assuming that
$\sh _{  \delta } \geq \omega ^ \xi n $,
but
$\snorm _{  \delta }  < \omega ^ \xi n $,
and that $\delta$ is minimal with the above properties. 
Then  
$\sh _{  \delta' }  < \omega ^ \xi n $,
for every $\delta' < \delta $, since,
otherwise, $\sh _{ \delta' }  \geq  \omega ^ \xi n $,
and
either $\delta'$ would give a counterexample,
contradicting the minimality of $\delta$,
or 
$\snorm _{  \delta}  \geq
\snorm _{  \delta' } \geq  \omega ^ \xi n $,
contradicting the assumption that
$\delta$ gives a counterexample.

Towards a contradiction, we shall
exclude all the possibilities for $\delta$.
Necessarily, $\delta>0$, since $\iota >0$.

We shall now exclude the case when $\delta$  is successor.
Suppose that $\delta= \varepsilon + 1$.
Since, by definition,
$ \sh _{  \varepsilon  } \+ \alpha_ \varepsilon   =
\sh _{ \delta   }  \geq \omega ^ \xi n$,
we get by Proposition \ref{facts}(\ref{factsn1n2})
that there are $n_1$ and $n_2$
such that $n_1+n_2=n$ and   
$\sh _{ \varepsilon  }  \geq  \omega ^ \xi n_1$,
$\alpha_ \varepsilon  \geq  \omega ^ \xi n_2$.
Since we know that
 $\sh _{ \delta' }  < \omega ^ \xi n $,
for every $\delta' < \delta $,
we 
have  $\sh _{ \varepsilon }  < \omega ^ \xi n $, so
that  $n_1< n$,
hence we can apply the inductive hypothesis with
$\iota' = \omega ^ \xi n_1$,
getting 
$\snorm _{ \varepsilon  } \geq  \omega ^ \xi n_1$.
Then 
$\snorm _{ \delta   }  =
 \snorm _{ \varepsilon  }  + \alpha_ \varepsilon  
\geq
 \omega ^ \xi n_1+ \omega ^ \xi n_2 =
\omega ^ \xi n$, a contradiction.
Thus we cannot get a counterexample at $\delta$ successor.
 
The remaining case is when $\delta$ is limit.
The argument shall be split into two cases.
First assume that $n=1$, that is,  
$\iota=\omega ^ \xi > 1$.
Then $\iota = \sup _{\iota ' < \iota} \iota'$, with, as above, 
the $\iota'$s varying on ordinals of the form   
$\omega ^{ \xi'} n'$.
Since 
$\sh _{  \delta}  \geq \omega ^ \xi  > \iota'$,
for every $\iota ' < \iota$,
we can apply the inductive hypothesis,
getting
 $\snorm _{ \delta}    \geq  \iota'$,
for every $\iota ' < \iota$,
hence
 $\snorm _{  \delta} \geq \sup _{\iota ' < \iota} 
  \iota' = \iota= \omega ^ \xi$,
contradicting the assumption that $\delta$ gives a counterexample to the
statement of the lemma.

The only case left is when $\delta$ is limit and 
$\iota=\omega ^ \xi n$,
for some $n>1$. 
We know that
$\sh _{  \delta' } < \omega ^ \xi n $,
for every $\delta' < \delta $.
But also
$ \sup _{ \delta' < \delta } \sh _{  \delta' }  
=
\sh _{  \delta }  \geq\omega ^ \xi n $,
hence there is some $\delta' < \delta $
such that  
$\sh _{  \delta' }  \geq \omega ^ \xi (n-1)$.
Fix some $\delta'$ satisfying the above inequality. 
By Lemma \ref{lem1},
$ \sh _{ \delta' } 
\+
\sh _{ [\delta', \delta) } 
\geq
\sh _{ \delta } $.
Since 
$\sh _{ \delta } \geq \omega ^ \xi n$
 and 
$ \sh _{ \delta' }  < \omega ^ \xi n$,
we necessarily get
$\sh _{ [\delta', \delta) }  \geq \omega ^\xi $,
by Proposition \ref{facts}(\ref{factsn1n2}). 
We can now apply the inductive hypothesis twice, getting 
$\snorm _{  \delta' }   \geq \omega ^ \xi (n-1)$
from
$\sh _{  \delta' }  \geq \omega ^ \xi (n-1)$
and
 $\snorm _{ [\delta' , \delta) }  \geq \omega ^\xi $
from
 $\sh _{ [\delta' , \delta) }  \geq \omega ^\xi $,
since $n >1$, hence
 $  \omega ^ \xi <\omega ^ \xi n =\iota $.
Then
$
\snorm _{ \delta}  =
\snorm _{  \delta' }   
+
\snorm _{ [\delta' , \delta )} 
 \geq \omega ^\xi (n-1) + \omega ^\xi =
\omega ^\xi n$.
Again, this contradicts the assumption that $\delta$
gives a counterexample.

We have showed that the lemma holds for 
$\iota$, assuming that it holds for every 
$\iota' < \iota$ of the form $\omega ^\xi n$.
By induction, the lemma holds for every 
$\iota$ of the form   
$\omega ^\xi n$.
 \end{proof}

\subsection*{The sums differ only for a finite number of inductive steps}

If $\alpha>0$ is an ordinal
expressed in Cantor normal form as
 $\alpha  =
 \omega ^ {\xi_k} r_k + 
\dots
+ \omega ^ {\xi_0}r_0  $ 
(hence $r_0 > 0$),
we call $\xi_0$
the \emph{smallest exponent}
of $\alpha$.

\begin{theorem} \labbel{infsum}
Suppose that $\zeta$  is a limit ordinal,
$(\alpha_ \gamma ) _{ \gamma < \zeta } $
is a  sequence of ordinals
which are not eventually zero,
and let $\xi$ be the smallest exponent of (the non zero ordinal)
$\sumh _{\gamma < \zeta } \alpha _ \gamma $.

Then there is $\bar{ \gamma } < \zeta $ such that,
for every $ \varepsilon \geq \bar{ \gamma }$  
\begin{align}\labbel{eqsums} 
\sumh _{ \varepsilon  \leq \gamma < \zeta } \alpha _ \gamma & = 
\sum _{ \varepsilon  \leq \gamma < \zeta } \alpha _{ \gamma}
= \omega ^ \xi  \quad \text{ and } \\ 
 \labbel{eqsumss}
\sumh _{\gamma < \zeta } \alpha _ \gamma & =
\sumh _{\gamma < \varepsilon } \alpha _ \gamma 
+ \omega ^ \xi
\end{align}      
 \end{theorem}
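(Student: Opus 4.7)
Set $S := \sh_\zeta$. Because $\zeta$ is limit and the $\alpha_\gamma$ are not eventually zero, Proposition \ref{factsww}(3) implies that the partial sums $\sh_{\delta'}$ strictly increase cofinally in $\zeta$, so $S = \sup_{\delta'<\zeta}\sh_{\delta'}$ is itself a limit ordinal; in particular $\xi \geq 1$. Reading off the Cantor normal form of $S$, let $T$ be the unique ordinal obtained by decreasing the coefficient of $\omega^\xi$ by one, so that $T + \omega^\xi = S$, $T < S$, and every Cantor exponent of $T$ is $\geq \xi$. Since $T < S$ is the supremum of the partial sums, choose $\bar\gamma < \zeta$ with $\sh_{\bar\gamma} > T$; I claim this $\bar\gamma$ works.

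Fix $\varepsilon$ with $\bar\gamma \leq \varepsilon < \zeta$. Proposition \ref{factsww}(3) also forces $\sh_\varepsilon < S$ strictly, so one may write $\sh_\varepsilon = T + V'$ with $0 < V' < \omega^\xi$. Setting $B := \sh_{[\varepsilon,\zeta)}$, Lemma \ref{lem1} pinches
\[
\sh_\varepsilon + B \;\leq\; S \;\leq\; \sh_\varepsilon \+ B.
\]
The heart of the proof is to deduce $B = \omega^\xi$ from this double inequality, by case analysis on the size of $B$. If $B \geq \omega^\xi$, the small summand $V'$ is absorbed, so $\sh_\varepsilon + B = T + B$, and the left inequality forces $B \leq \omega^\xi$, hence $B = \omega^\xi$. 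If instead $B < \omega^\xi$, then both $V'$ and $B$ have Cantor exponents strictly below $\xi$, so $V' \+ B < \omega^\xi$; repeated use of Proposition \ref{facts}(\ref{facts5}) to trade $+$ for $\+$ at the $\xi$-boundary (valid since $T$'s exponents are all $\geq \xi$), together with associativity of $\+$, collapses $\sh_\varepsilon \+ B$ to $T + (V' \+ B) < T + \omega^\xi = S$, contradicting the right inequality. Thus $B = \omega^\xi$ in every case.

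From $\sh_{[\varepsilon,\zeta)} = \omega^\xi$, Lemma \ref{lm} (applied with $n = 1$) yields $\snorm_{[\varepsilon,\zeta)} \geq \omega^\xi$, while Proposition \ref{factsww}(1) supplies the reverse inequality, proving \eqref{eqsums}. For \eqref{eqsumss}, a direct calculation suffices: since $V' < \omega^\xi$ is absorbed on the right,
\[
\sh_\varepsilon + \omega^\xi \;=\; (T + V') + \omega^\xi \;=\; T + \omega^\xi \;=\; S.
\]
The main obstacle is the $B < \omega^\xi$ case of the central argument: here one must carefully manipulate a mixed expression involving both $+$ and $\+$, exploiting Proposition \ref{facts}(\ref{facts5}) twice to collapse $T \+ (V' \+ B)$ to an ordinary sum whose size is pinned strictly below $S$ by the exponent structure, since nothing below $\omega^\xi$ can fill the gap between $T$ and $S$.
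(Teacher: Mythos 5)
Your proof is correct and follows essentially the same route as the paper's: both pin $\sumh_{\gamma<\zeta}\alpha_\gamma$ between $\sh_\varepsilon + \sh_{[\varepsilon,\zeta)}$ and $\sh_\varepsilon \+ \sh_{[\varepsilon,\zeta)}$ via Lemma \ref{lem1}, force $\sh_{[\varepsilon,\zeta)}=\omega^\xi$ from the Cantor normal form at the $\omega^\xi$ level, and then invoke Lemma \ref{lm} with $n=1$ for the ordinary-sum part. The only cosmetic difference is that your $B<\omega^\xi$ case re-derives by hand, via the decomposition $\sh_\varepsilon=T+V'$, exactly the inequality that the paper gets by citing Proposition \ref{facts}(\ref{factsex4}).
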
  

\begin{proof} 
Let  $ \sh _{ \zeta }= \sumh _{\gamma < \zeta } \alpha _ \gamma 
 =
 \omega ^ {\xi_k} r_k + 
\dots
+ \omega ^ {\xi}r $ 
be expressed in Cantor normal form.
Let $ \varepsilon  < \zeta$.
Since the sequence
$( \alpha _ \gamma ) _{\gamma < \zeta} $ 
is  not eventually zero, then
$\sh _{ \varepsilon  } 
<
\sh _{\zeta }  
$,
by Proposition \ref{factsww}(3), 
hence $\sh _{ \varepsilon  } + \omega ^ \xi
\leq
\sh _{\zeta }$, 
expressing $\sh _{ \varepsilon }$ in Cantor normal form and 
using Proposition \ref{facts}(\ref{facts5}).
Since, by Lemma \ref{lem1},
$\sh _{ \zeta } \leq
 \sh _{ \varepsilon }\+
 \sh _{ [\varepsilon , \zeta )}$, we get
$\sh _{ [\varepsilon , \zeta )} \geq \omega ^ \xi$,
by Proposition \ref{facts}(\ref{factsex4})
with $\alpha= \sh _{ \varepsilon  }$
and $\beta= \sh _{ [\varepsilon , \zeta )}$
(were $\sh _{ [\varepsilon , \zeta )} < \omega ^ \xi$,
we would get $\sh _{ \zeta } \leq
 \sh _{ \varepsilon }\+
 \sh _{ [\varepsilon , \zeta )}
<
 \sh _{ \varepsilon } + \omega ^\xi \leq \sh _{ \zeta }$,
a contradiction).
By Lemma \ref{lm},
also   
$\snorm _{ [\varepsilon , \zeta )} \geq \omega ^ \xi$.
All this holds for every  $ \varepsilon  < \zeta$.

On the other hand,
since $\zeta$ is limit, thus
$
\sh _{ \zeta }  =
\sup _{ \varepsilon < \zeta } \sh _{ \varepsilon  } $,
and since $r>0$, 
then there is  some 
$\bar{ \gamma } < \zeta $
such that  
 $ \sh _{ \bar{ \gamma }}\geq 
 \omega ^ {\xi_k} r_k + 
\dots
+ \omega ^ {\xi}(r-1) $.
 Then 
 $ \sh _{ \varepsilon }\geq 
 \sh _{ \bar{ \gamma }}\geq 
 \omega ^ {\xi_k} r_k + 
\dots
+ \omega ^ {\xi}(r-1) $,
for every $\varepsilon$ such that 
$\zeta  >
 \varepsilon \geq 
\bar{ \gamma }$.
By Lemma \ref{lem1} 
and the previous paragraph we get that if
$\zeta  >
 \varepsilon \geq 
\bar{ \gamma }$, then
$ \omega ^ {\xi_k} r_k + 
\dots
+ \omega ^ {\xi}r =
\sh _{ \zeta } \geq 
 \sh _{ \varepsilon }+
 \sh _{ [\varepsilon , \zeta )}
\geq 
\omega ^ {\xi_k} r_k + 
\dots
+ \omega ^ {\xi}(r-1) +
 \sh _{ [\varepsilon , \zeta )}
\geq 
\omega ^ {\xi_k} r_k + 
\dots
+ \omega ^ {\xi}r$,
hence all are equal, and we obtain at once
$\sh _{ [\varepsilon , \zeta )} = \omega ^ {\xi}$
(e.~g., by Proposition \ref{facts}(\ref{facts5})),
as well as 
\eqref{eqsumss}.
Hence, recalling the above paragraph, we get also
 $ \omega ^ \xi \leq \snorm _{ [\varepsilon , \zeta )} \leq 
\sh _{ [\varepsilon , \zeta )} = \omega ^ {\xi}$,
thus the proof of 
\eqref{eqsums} is complete.
\end{proof}

We can introduce 
 intermediate variants between
$\sum _{\gamma < \zeta } \alpha _ \gamma $
and
$\sumh _{\gamma < \zeta } \alpha _ \gamma $
by taking the natural sum at certain successor stages
and the usual ordinal sum at the remaining successor stages.
As a simple consequence of Theorem \ref{infsum},
we will get in Corollary \ref{finstep} below  that 
$\sumh _{\gamma < \zeta } \alpha _ \gamma $
can be computed by taking natural sums at just a
finite number of stages.
In more details, we are dealing with the notion introduced in the following
definition.

\begin{definition} \labbel{stages}
Suppose that $\zeta$ is an ordinal, 
$(\alpha_ \gamma ) _{ \gamma < \zeta } $
is a  sequence of ordinals, and $G$ is a set of ordinals.
For every $\delta \leq \zeta  $,  we define the
\emph{partial natural sum (relative to  $G$)}
$\sumg _{ \gamma < \delta } \alpha _ \gamma $
inductively as follows.
 \begin{align*}
 \sumg _{ \gamma < 0 }   \alpha _ \gamma & =  0\\ 
 \sumg _{ \gamma < \delta+1 }  \alpha _ \gamma &  =
\left(\sumg _{ \gamma < \delta } \alpha _ \gamma \right)
\+
\alpha_ \delta \quad \text{ if $\delta \in G$} \\
 \sumg _{ \gamma < \delta+1 } \alpha _ \gamma & =
\left(\sumg _{ \gamma < \delta } \alpha _ \gamma \right)
+
\alpha_ \delta \quad \text{ if $\delta \not\in G$} \\
 \sumg _{ \gamma < \delta } \alpha _ \gamma &  =
\sup _{ \delta ' < \delta }\  \sumg _{ \gamma < \delta' } \alpha _ \gamma
\quad \text{ for $\delta$ limit}  
\end{align*}

Notice that, in particular, if $G \supseteq  \delta  $,
then 
$\sumg _{ \gamma < \delta } \alpha _ \gamma 
= 
\sumh _{ \gamma < \delta } \alpha _ \gamma $,
while if 
 $G \cap \delta = \emptyset $,
then 
$\sumg _{ \gamma < \delta } \alpha _ \gamma 
= 
\sum _{ \gamma < \delta } \alpha _ \gamma $.

Of course, the definition of 
$\sumg _{ \gamma < \delta } \alpha _ \gamma $
depends only on
$G \cap \delta $, that is, we could have assumed that
$G \subseteq \zeta $.
However, since we shall
have frequent occasion to deal with partial sums,  it will be notationally convenient
to take $G$ as an arbitrary set of ordinals.
 \end{definition}   

As custom by now, let 
$S^G_ \varepsilon $ abbreviate
$\sumg _{ \gamma < \varepsilon  } \alpha _ \gamma $,
and similarly when considering other sets of ordinals in place of $G$.
Also
$S ^{G } _{[ \delta , \zeta  )}$ has the usual meaning, that is,
$S ^{G } _{[ \delta , \zeta  )} = 
\sumg _{ \delta \leq  \gamma < \zeta  } \alpha _ \gamma =
\sumhh _{ \varepsilon '   < \varepsilon  } \alpha _{ \delta  + \varepsilon' } $,
where 
$\varepsilon$ is the only ordinal such that 
$\delta+ \varepsilon = \zeta $, 
and $ \varepsilon ' \in H$
if and only if  
$ \delta + \varepsilon ' \in G$. 

\begin{lemma} \labbel{ginter}
Under the assumptions in Definition \ref{stages},  if   $\delta \leq \zeta $, then 
\begin{equation*}\labbel{gs}
\sumg _{  \gamma < \delta   } \alpha _ \gamma 
\+
\sumg _{ \delta \leq  \gamma < \zeta  } \alpha _ \gamma 
\geq 
\sumg _{  \gamma < \zeta  } \alpha _ \gamma 
\geq
\sumg _{  \gamma < \delta   } \alpha _ \gamma 
+
\sumg _{ \delta \leq  \gamma < \zeta  } \alpha _ \gamma 
 \end{equation*}    
 \end{lemma}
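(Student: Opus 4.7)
The plan is to mimic the proof of Lemma \ref{lem1} almost verbatim, keeping $\delta$ fixed and performing an induction on $\zeta \geq \delta$, but splitting the successor step into the two possible cases $\zeta' \in G$ and $\zeta' \notin G$ corresponding to the two clauses in Definition \ref{stages}.

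For the base case $\zeta = \delta$ both inequalities are trivial since $S^G_{[\delta,\zeta)}$ is an empty sum with value $0$. For the limit case $\zeta$, the argument from Lemma \ref{lem1} carries over with no change: the left inequality follows by applying the inductive hypothesis at each $\zeta^*$ with $\delta \leq \zeta^* < \zeta$ and passing to the supremum, while the right inequality uses in addition right-continuity of the ordinary $+$ to pull $S^G_\delta$ outside the supremum.

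The real work is in the successor case $\zeta = \zeta' + 1$. If $\zeta' \in G$, the situation is identical to that in Lemma \ref{lem1}: both $S^G_\zeta = S^G_{\zeta'} \+ \alpha_{\zeta'}$ and $S^G_{[\delta,\zeta)} = S^G_{[\delta,\zeta')} \+ \alpha_{\zeta'}$, so the left inequality follows from associativity of $\+$ and the inductive hypothesis, and the right inequality uses in addition Proposition \ref{facts}(\ref{facts3op}) to move the $\+\alpha_{\zeta'}$ past the $+$. If instead $\zeta' \notin G$, then $S^G_\zeta = S^G_{\zeta'} + \alpha_{\zeta'}$ and $S^G_{[\delta,\zeta)} = S^G_{[\delta,\zeta')} + \alpha_{\zeta'}$; the right inequality is immediate from the inductive hypothesis together with monotonicity and associativity of ordinal addition. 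The left inequality in this subcase is the one that needs a bit of care: we must bound
\[
S^G_\delta \+ \bigl( S^G_{[\delta,\zeta')} + \alpha_{\zeta'} \bigr) \geq \bigl( S^G_\delta \+ S^G_{[\delta,\zeta')} \bigr) + \alpha_{\zeta'} \geq S^G_{\zeta'} + \alpha_{\zeta'} = S^G_\zeta,
\]
where the first step is exactly Proposition \ref{facts}(\ref{facts7u}) and the second is the inductive hypothesis together with monotonicity of ordinary $+$.

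The main obstacle, though only a minor one, is keeping the bookkeeping straight when rewriting $S^G_{[\delta,\zeta)}$ via the translation ``$\varepsilon' \in H$ iff $\delta+\varepsilon' \in G$'' explained after Definition \ref{stages}, so that the successor clause applied at $\zeta' = \delta + \varepsilon'$ on the left corresponds to the correct clause applied at $\varepsilon'$ on the right; once one observes that these two choices always agree by definition of $H$, the whole argument reduces mechanically to the four cases above, each of which is a one-line application of Proposition \ref{facts} parts (\ref{facts3op}) and (\ref{facts7u}) plus the inductive hypothesis.
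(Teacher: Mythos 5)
Your proof is correct and is essentially the argument the paper intends: the paper's own proof just says ``similar to Lemma \ref{lem1}, using Proposition \ref{facts}(\ref{facts7u}) at certain points in the successor step,'' and your case split at the successor stage (with Proposition \ref{facts}(\ref{facts7u}) handling the left inequality when $\zeta'\notin G$) fills in exactly those details. No gaps.
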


 \begin{proof} 
Similar to the proof of
Lemma \ref{lem1}.
At certain points in the successor induction step
one needs Proposition \ref{facts}(7). 
\end{proof}    

As a consequence of Theorem \ref{infsum}, we get  a way for
computing  
$\sumg _{ \gamma < \zeta } \alpha _ \gamma $
in the case when $\zeta$ is limit.

\begin{corollary} \labbel{computeg}
Under the assumptions in  Theorem \ref{infsum}
and if $G \subseteq \zeta $, 
 there is $\bar{ \gamma } < \zeta $ such that,
for every $ \varepsilon \geq \bar{ \gamma }$,  
\begin{equation*} 
\sumg _{\gamma < \zeta } \alpha _ \gamma =
\sumg _{\gamma < \varepsilon } \alpha _ \gamma 
+ \omega ^ \xi
\end{equation*}      
 \end{corollary}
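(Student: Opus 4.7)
The plan is to apply Theorem \ref{infsum} to identify the tail $S^G_{[\varepsilon,\zeta)}$ as $\omega^\xi$, read off the lower bound from Lemma \ref{ginter}, and then obtain the matching upper bound only after passing to the supremum over $\delta<\zeta$. Lemma \ref{ginter} alone only delivers $S^G_\zeta\leq S^G_\varepsilon\+\omega^\xi$, which is in general strictly larger than the desired $S^G_\varepsilon+\omega^\xi$ (since $S^G_\varepsilon$ may carry Cantor coefficients with exponent below $\xi$ for arbitrarily large $\varepsilon$, as already happens for $\sh$ itself; cf.\ Remark \ref{rmklem1}). An extra step is therefore necessary.

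First I would record the sandwich $\snorm_\eta\leq S^G_\eta\leq \sh_\eta$, valid for every ordinal $\eta$ and every set $G$. This is a straightforward induction on $\eta$, using Proposition \ref{facts}(2) (which gives $\alpha+\beta\leq\alpha\+\beta$) at successor steps and termwise suprema at limit steps. Let $\bar\gamma<\zeta$ be as furnished by Theorem \ref{infsum} applied to the given sequence. Applying the sandwich to the tail sequence $(\alpha_\gamma)_{\varepsilon\leq\gamma<\zeta}$, together with \eqref{eqsums}, forces $S^G_{[\varepsilon,\zeta)}=\omega^\xi$ for every $\varepsilon$ with $\bar\gamma\leq\varepsilon<\zeta$. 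The lower bound half of Lemma \ref{ginter} then gives at once $S^G_\zeta\geq S^G_\varepsilon+S^G_{[\varepsilon,\zeta)}=S^G_\varepsilon+\omega^\xi$.

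For the upper bound, fix $\varepsilon\geq\bar\gamma$ and let $\delta$ range over ordinals with $\varepsilon\leq\delta<\zeta$. Lemma \ref{ginter} supplies $S^G_\delta\leq S^G_\varepsilon\+ S^G_{[\varepsilon,\delta)}$, while the sandwich together with Proposition \ref{factsww}(3) (observing that the portion of the sequence with indices in $[\delta,\zeta)$ is not eventually zero) yields $S^G_{[\varepsilon,\delta)}\leq \sh_{[\varepsilon,\delta)}<\sh_{[\varepsilon,\zeta)}=\omega^\xi$. Proposition \ref{facts}(\ref{factsex4}), with $\alpha=S^G_\varepsilon$, $\beta=S^G_{[\varepsilon,\delta)}$ and exponent $\xi$, then upgrades this to the strict bound $S^G_\delta<S^G_\varepsilon+\omega^\xi$; for $\delta<\varepsilon$ the same inequality is trivial by monotonicity of $S^G$ in its upper index. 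Since $\zeta$ is limit, $S^G_\zeta=\sup_{\delta<\zeta}S^G_\delta\leq S^G_\varepsilon+\omega^\xi$, matching the lower bound.

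The main subtlety, as noted in the first paragraph, is precisely that one cannot equate $S^G_\varepsilon\+\omega^\xi$ with $S^G_\varepsilon+\omega^\xi$; the detour through the approximations $S^G_\delta$ for $\varepsilon\leq\delta<\zeta$, followed by taking the supremum \emph{after} applying Proposition \ref{facts}(\ref{factsex4}), is what makes the argument go through. This mirrors the closing move in the proof of Theorem \ref{infsum}.
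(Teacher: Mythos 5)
Your proposal is correct and follows essentially the same route as the paper: identify $S^G_{[\varepsilon,\zeta)}=\omega^\xi$ by sandwiching between $\snorm$ and $\sh$ via Theorem \ref{infsum}, get the lower bound from the right-hand inequality of Lemma \ref{ginter}, and get the upper bound by bounding each approximant $S^G_\delta$ strictly below $S^G_\varepsilon+\omega^\xi$ via the left-hand inequality of Lemma \ref{ginter} together with Proposition \ref{facts}(\ref{factsex4}) before taking the supremum. The only (harmless) differences are that you make the sandwich inequality and the induction behind it explicit, and that you carry a general $\varepsilon\geq\bar\gamma$ throughout where the paper writes the argument for $\varepsilon=\bar\gamma$.
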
  

\begin{proof}
Let $\bar{\gamma}$ be given by
Theorem  \ref{infsum}. 
First,
using 
the case $ \varepsilon = \bar{\gamma}$ in
equation \eqref{eqsums} 
in Theorem \ref{infsum},
compute
$\omega ^\xi =
\sh _{[\bar{\gamma}, \zeta  )} \geq
S ^{G } _{[\bar{\gamma}, \zeta )} \geq
\snorm _{[\bar{\gamma}, \zeta  )}
=\omega ^\xi$,
hence
$S ^{G } _{[\bar{\gamma}, \zeta  )}
=\omega ^\xi$.
Using Lemma \ref{ginter}, we get
$S ^{G } _{\zeta }
\geq 
S ^{G } _{\bar{\gamma}} + S ^{G } _{[\bar{\gamma}, \zeta )} =
S ^{G } _{\bar{\gamma}} + \omega ^\xi$.
To prove the reverse inequality,
notice that, since 
$(\alpha_ \gamma ) _{ \gamma < \zeta } $ is not eventually zero,
we have that 
$S ^{G } _{[\bar{\gamma}, \varepsilon )} <
S ^{G } _{[\bar{\gamma}, \zeta  )}
=\omega ^\xi$,
for every $\varepsilon$ such that 
$\bar{\gamma} \leq \varepsilon < \zeta  $. 
Then $ S ^{G } _{ \varepsilon  } 
\leq
S ^{G } _{\bar{\gamma}} \+ S ^{G } _{[\bar{\gamma}, \varepsilon  )}
<
 S ^{G } _{\bar{\gamma}} + \omega ^\xi$,
for  $\varepsilon$ with
$\bar{\gamma} \leq \varepsilon < \zeta  $, 
by Lemma \ref{ginter} and Proposition \ref{facts}(\ref{factsex4}). 
Hence 
$ S ^{G } _{ \zeta   } 
= \sup _{ \varepsilon < \zeta  } 
 S ^{G } _{ \varepsilon  } 
\leq
 S ^{G } _{\bar{\gamma}} + \omega ^\xi$.
\end{proof}

\begin{corollary} \labbel{finstep}
Suppose that $\zeta$ is an ordinal and 
$(\alpha_ \gamma ) _{ \gamma < \zeta } $
is a  sequence of ordinals. Then the following hold.
  \begin{enumerate}[(1)]    
\item   
There is a finite set $F  $
(depending on the sequence)
such that  $F \subseteq \zeta $  and
$\sumh _{ \gamma < \zeta  } \alpha _ \gamma 
= 
\sumf _{ \gamma < \zeta } \alpha _ \gamma $.
\item
More generally, we can choose a finite $F \subseteq \zeta $ in such a way that
$\sumg _{ \gamma < \zeta  } \alpha _ \gamma 
= 
\sumhh _{ \gamma < \zeta  } \alpha _ \gamma $,
whenever 
$G, H  $ are sets of ordinals such that 
 $G \cap F = H \cap F$.
\item
In particular, for every set $G$ of ordinals, 
there is some finite $H \subseteq \zeta $
such that 
$\sumg _{ \gamma < \zeta  } \alpha _ \gamma 
= 
\sumhh _{ \gamma < \zeta  } \alpha _ \gamma $.
Letting $G$ vary among all subsets of $\zeta$,
we get only a finite number of values for
$\sumg _{ \gamma < \zeta } \alpha _ \gamma $.
\end{enumerate} 
\end{corollary}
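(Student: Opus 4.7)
The plan is to prove the three parts in the order (2), (1), (3), since (2) is the strongest and (1), (3) follow from it by taking specific choices of $G$ and $H$. For (2), I will proceed by transfinite induction on $\zeta$, using Corollary \ref{computeg} as the central tool at the limit step.

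The zero case is trivial with $F=\emptyset$. For $\zeta=\delta+1$ successor, the inductive hypothesis gives a finite $F_0\subseteq \delta$ controlling $\sumg_{\gamma<\delta}\alpha_\gamma$; I will set $F=F_0\cup\{\delta\}$. If $G\cap F=H\cap F$, then $G\cap F_0=H\cap F_0$ gives $\sumg_{\gamma<\delta}\alpha_\gamma=\sumhh_{\gamma<\delta}\alpha_\gamma$, while the condition $\delta\in G\Leftrightarrow \delta\in H$ guarantees that the last step is computed with the same operation ($\+$ or $+$) on both sides, so the two totals coincide.

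For $\zeta$ limit, I first dispose of the easy subcase: if $(\alpha_\gamma)_{\gamma<\zeta}$ is eventually zero beyond some $\beta<\zeta$, then $\sumg_{\gamma<\zeta}\alpha_\gamma=\sumg_{\gamma<\beta}\alpha_\gamma$ for every $G$ (since appending zeros changes neither $+$ nor $\+$), and the inductive hypothesis applied at $\beta$ produces the required $F$. Otherwise Theorem \ref{infsum} applies, and Corollary \ref{computeg} yields an ordinal $\bar\gamma<\zeta$ and an exponent $\xi$ such that
\begin{equation*}
\sumg_{\gamma<\zeta}\alpha_\gamma = \sumg_{\gamma<\bar\gamma}\alpha_\gamma + \omega^\xi
\end{equation*}
for \emph{every} set of ordinals $G$; crucially, both $\bar\gamma$ and $\xi$ depend only on the sequence, not on $G$. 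Applying the inductive hypothesis to the truncated sequence $(\alpha_\gamma)_{\gamma<\bar\gamma}$ produces a finite $F_0\subseteq\bar\gamma$ such that $\sumg_{\gamma<\bar\gamma}\alpha_\gamma$ is determined by $G\cap F_0$. Setting $F=F_0$, the displayed formula then shows $\sumg_{\gamma<\zeta}\alpha_\gamma$ is also determined by $G\cap F$, completing the induction.

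Part (1) is the instance of (2) with $G=\zeta$ (so $\sumg=\sumh$) and $H=F$ (so $\sumhh=\sumf$), noting $G\cap F=F=H\cap F$. Part (3) is obtained by taking the $F$ from (2) and setting $H=G\cap F$, which is finite, contained in $\zeta$, and satisfies $H\cap F=G\cap F$; the bound on the number of distinct values of $\sumg_{\gamma<\zeta}\alpha_\gamma$ then follows since they are parametrized by the $2^{|F|}$ subsets of the finite set $F$. The one step requiring care is verifying the $G$-uniformity of $\bar\gamma$ and $\omega^\xi$ in the limit step, but this is immediate from the statements of Theorem \ref{infsum} and Corollary \ref{computeg}, whose $\xi$ is defined purely in terms of the sequence; the rest is a routine two-case transfinite induction.
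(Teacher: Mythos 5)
Your proof is correct and follows essentially the same route as the paper: induction on $\zeta$, with the eventually-zero subcase split off and Theorem \ref{infsum} / Corollary \ref{computeg} (with the $G$-uniformity of $\bar\gamma$ and $\omega^\xi$) driving the limit step. The only difference is organizational — you obtain (1) as the special case $G=\zeta$, $H=F$ of (2), whereas the paper proves (1) by a separate (slightly longer) computation; your shortcut is valid and a bit cleaner.
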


 \begin{proof}
We first notice that (3) follow
from (2): just take $H= G \cap F$,
where $F$ is given by (2). 
Moreover, the set 
$\{ G \cap F \mid G \subseteq \zeta  \}$ is finite. 

We shall prove (1) and (2) by induction on $\zeta$.

The result is trivial when $\zeta= 0$.
The step from $\zeta$ to $\zeta+1$ is trivial, too:
if $F$ works for $\zeta$, then 
surely $F \cup \{ \zeta \} $ works for  $\zeta+1$.

Hence let us assume that $\zeta$ is limit.  
If the sequence 
$(\alpha_ \gamma ) _{ \gamma < \zeta } $ is constantly
zero from some point on, 
say 
$\alpha_ \gamma =0$ for $\gamma \geq \zeta '$,
then we can apply the inductive hypothesis 
for $\zeta'$, getting some $F$ 
working for  
  $(\alpha_ \gamma ) _{ \gamma < \zeta '} $.
But then trivially $F$ 
works for the original sequence $(\alpha_ \gamma ) _{ \gamma < \zeta } $,
too.
Hence we can suppose that
$(\alpha_ \gamma ) _{ \gamma < \zeta } $ is not eventually zero
and apply Theorem \ref{infsum},
getting some
$\bar{\gamma}$
for which equations \eqref{eqsums} and
\eqref{eqsumss} there holds.

Now (1) is easy.
By the inductive hypothesis,
there is some finite $F \subseteq \bar{\gamma} $  such that
$\sh _{ \bar{\gamma}}
= 
S^F _{ \bar{\gamma}}$.
Then equations \eqref{eqsumss} and \eqref{eqsums} 
with $ \varepsilon = \bar{\gamma}$ 
 give
$\sh _{ \zeta }
= 
\sh _{  \bar{\gamma}} + \omega ^ \xi
=
S^F _{ \bar{\gamma}} + \snorm _{[\bar{\gamma}, \zeta )} 
=
S^F _{ \zeta } $,
where the last identity is proved
by induction 
on $\zeta'$,
with $\bar{\gamma} \leq \zeta' \leq  \zeta$,
and where the successor step
uses the assumption  
$F \subseteq    \bar{\gamma}$,
and the limit step uses (right) continuity of $+$
at limits. 

To prove (2), use again  the inductive hypothesis
to get some finite $F \subseteq \bar{\gamma} $  such that
$ S^G _{\bar{\gamma}} 
= 
S^H _{\bar{\gamma}} $,
whenever 
 $G \cap F = H \cap F$.
We claim that $F$ works for
 $\zeta$, too, that is, 
$ S^G _{ \zeta } 
= 
S^H _{ \zeta } $,
whenever 
$G \cap F = H \cap F$.
Indeed, by Corollary \ref{computeg}, 
$S^G _{ \zeta }=
S ^{G } _{\bar{\gamma}} + \omega ^\xi $,
for every $G \subseteq \zeta $
(notice that in Corollary \ref{computeg}
we can choose the same 
$\bar{\gamma}$ as the one given by Theorem \ref{infsum}).  
This is enough, since if
$G \cap F = H \cap F$, then,
by the inductive assumption
$ S^{G }_{\bar{\gamma}} 
= 
S ^{H } _{\bar{\gamma}} $,
and then
$S^G _{ \zeta }=
S ^{G } _{\bar{\gamma}} + \omega ^\xi =
S ^{H} _{\bar{\gamma}} + \omega ^\xi =
S^H _{ \zeta }
$.
 \end{proof}

\section{Various kinds of mixed sums} \labbel{mixedsums}

\begin{definition} \labbel{infmix}
If $(\alpha_i) _{i \in I} $
is a sequence of ordinals, we say that
an ordinal $ \beta $ is a \emph{mixed sum}
of $(\alpha_i) _{i \in I} $  if there are 
pairwise disjoint
subsets $(A_i) _{i \in I} $ of $ \beta $
such that 
$\bigcup _{i \in I} A_i = \beta  $
and, for every $i \in I$,
$A_i$ has order type $\alpha_i$,
with respect  to the order induced on $A_i$ 
by $ \beta $.    

In the above situation, we say
that $ \beta $ is a mixed sum of
$(\alpha_i) _{i \in I} $
\emph{realized by 
$(A_i) _{i \in I} $},
or simply that 
$ \beta $ is realized by 
$(A_i) _{i \in I} $
(notice that the $\alpha_i$'s can be retrieved from
the $A_i$'s, hence the terminology is not ambiguous).

Given a realization
$(A_i) _{i \in I} $ and 
$\bar{\imath} \in I$,
we say that 
$A _{\bar{\imath}} $
is \emph{convex (in the realization $(A_i) _{i \in I} $)} if 
$(a,a') _ \beta = \{ \beta ' < \beta \mid a < \beta ' < a' \} \subseteq A _{\bar{\imath}}  $,
for every $a < a' \in A _{\bar{\imath}} $.  
 \end{definition}   

Carruth \cite{Car}, in different terminology,  
showed that $\alpha_ 1 \+ \alpha _2 $
is the largest mixed sum 
of $\alpha_1$ and $ \alpha _2$. 
In general,
when $I$ is infinite,
there is no largest mixed sum of a sequence of ordinals;
see the comment after Theorem 4.2  in  \cite{w}.
Hence Carruth theorem can be generalized only
if one restricts to mixed sums satisfying particular properties.
This has been done in \cite{w} in the case of 
mixed sums related to
the countably infinite natural sum.
We shall treat here the case of 
arbitrary iterated natural sums,
a case which is slightly more involved.

\begin{theorem} \labbel{mix}
Suppose that $\zeta$ is an ordinal,
$(\alpha_ \gamma ) _{ \gamma < \zeta } $
is a  sequence of ordinals and 
$G \subseteq \zeta $. 
Put
$\beta= \sumg _{ \gamma < \zeta} \alpha _ \gamma  $. 
Then $\beta$ is a mixed sum of $(\alpha_ \gamma ) _{ \gamma < \zeta } $.
Moreover, $\beta$ can be realized by 
$(A_ \gamma ) _{ \gamma < \zeta } $
in such a way that
  \begin{enumerate}[(1)]    
\item 
For every $\varepsilon < \zeta $,
the set $ \Gamma_ \varepsilon =
  \{ \gamma < \zeta  \mid \gamma > \varepsilon 
\text{ and } b< a, \text{ for some } b \in A_ \gamma   
\text{ and } a \in A_ \varepsilon  \}$  is finite, and
\item 
all but a finite number of 
the $A_ \varepsilon$'s
are convex in the realization. 
  \end{enumerate}
 \end{theorem}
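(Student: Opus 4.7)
The plan is to prove the theorem by transfinite induction on $\zeta$, constructing the realization $(A_\gamma)_{\gamma < \zeta}$ of $\beta = \sumg_{\gamma < \zeta} \alpha_\gamma$ stage by stage. The base case $\zeta = 0$ is trivial. For the successor step $\zeta = \delta + 1$, invoke the inductive hypothesis to obtain a realization of $S^G_\delta$. If $\delta \notin G$, then $\beta = S^G_\delta + \alpha_\delta$ is an ordinary sum, so set $A_\delta$ to be the terminal interval $\{\gamma < \beta : \gamma \geq S^G_\delta\}$ of order type $\alpha_\delta$; this interval is convex, leaves the previous convexities intact because each earlier $A_\gamma$ lies in the initial segment $S^G_\delta$, and adds nothing to any $\Gamma_\varepsilon$ since $A_\delta$ sits strictly above every earlier piece.

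The essential successor case is $\delta \in G$, where $\beta = S^G_\delta \+ \alpha_\delta$. Here I would invoke Carruth's theorem through its explicit block realization: writing $S^G_\delta = \omega^{\xi_k} r_k + \dots + \omega^{\xi_0} r_0$ and $\alpha_\delta = \omega^{\xi_k} s_k + \dots + \omega^{\xi_0} s_0$ with padded common exponents, partition $\beta$ into the $k+1$ consecutive blocks of sizes $\omega^{\xi_j}(r_j + s_j)$, and within each block route the first $\omega^{\xi_j} r_j$ elements to a set $B$ and the remaining $\omega^{\xi_j} s_j$ elements to $A_\delta$. Then $B$ has order type $S^G_\delta$, and I transport the inductive realization of $S^G_\delta$ through the order isomorphism to obtain $(A_\gamma)_{\gamma < \delta}$ as subsets of $B \subseteq \beta$. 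Since $B$ inherits its order from $\beta$, for each $\varepsilon < \delta$ the set $\Gamma_\varepsilon$ computed in $\beta$ agrees with the inductive one on indices $\gamma < \delta$ and can only gain the single new index $\delta$. For convexity, $B$ decomposes into at most $k+1$ convex pieces in $\beta$ separated by stretches of $A_\delta$; an inductively convex $A_\gamma \subseteq B$ loses convexity in $\beta$ only if, viewed as an interval of $S^G_\delta$, it straddles a boundary between two consecutive pieces, and since the $A_\gamma$ are pairwise disjoint, each of the at most $k$ boundaries can be straddled by at most one such $A_\gamma$. Adding $A_\delta$ itself, the set of non-convex pieces grows by at most $k+1$ and hence stays finite.

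For the limit case, two subcases arise. If the sequence is eventually zero past some $\zeta' < \zeta$, then $S^G_\zeta = S^G_{\zeta'}$ and I would extend the inductive realization of $S^G_{\zeta'}$ by $A_\gamma = \emptyset$ for $\zeta' \leq \gamma < \zeta$; these empty sets are vacuously convex and invisible to every $\Gamma_\varepsilon$. Otherwise, Corollary \ref{computeg} yields $\bar\gamma < \zeta$ with $\beta = S^G_{\bar\gamma} + \omega^\xi$, while equation \eqref{eqsums} of Theorem \ref{infsum} gives $\omega^\xi = \sum_{\bar\gamma \leq \gamma < \zeta} \alpha_\gamma$. Applying the inductive hypothesis at $\bar\gamma$ yields a realization of the initial segment of order type $S^G_{\bar\gamma}$, and I would fill the terminal segment of order type $\omega^\xi$ by placing the $A_\gamma$ for $\bar\gamma \leq \gamma < \zeta$ as consecutive convex intervals whose concatenation exhausts it. Every top piece is convex in $\beta$, no $A_\gamma$ from the bottom segment loses convexity (an initial segment preserves convexity), and no $\Gamma_\varepsilon$ gains new indices, because every $A_\gamma$ with $\gamma \geq \bar\gamma$ sits strictly above the initial segment housing the earlier pieces and the top pieces are themselves laid out in increasing $\gamma$-order.

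The main obstacle I anticipate is the successor case with $\delta \in G$: Carruth's theorem is typically cited only as an existence result, but here one needs the specific block realization to ensure that at most finitely many of the infinitely many $A_\gamma$ produced by the inductive hypothesis at $\delta$ can lose convexity when embedded into $\beta$. Verifying carefully that the order isomorphism $S^G_\delta \cong B \subseteq \beta$ preserves both the inductive control on $\Gamma_\varepsilon$ and the finite bound on non-convexity is where the bookkeeping is densest.
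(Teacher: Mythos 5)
Your proof is correct, and its overall skeleton (transfinite induction on $\zeta$, the case split $\delta\notin G$ / $\delta\in G$ / limit, and the use of Theorem \ref{infsum} and Corollary \ref{computeg} to restart from $\bar\gamma$ at limit stages, which is what prevents the finitely many exceptions from accumulating) coincides with the paper's. Where you genuinely diverge is in the crucial successor step $\delta\in G$. The paper strengthens condition (2) to an invariant (2$'$) asserting that all but finitely many $A_\varepsilon$ are not only convex but contained in a single \emph{block} of $\beta$; it then realizes $S^G_\delta \+ \alpha_\delta$ as a \emph{pure} mixed sum and uses a separate lemma (each block of a component is contained in a block of the pure sum) to propagate (2$'$). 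You instead keep the bare invariant (2) and use one explicit aligned realization of $S^G_\delta \+ \alpha_\delta$, in which the copy $B$ of $S^G_\delta$ splits into at most $k+1$ convex pieces of $\beta$; your observation that two disjoint order-convex sets cannot both straddle the same cut point (each straddler must contain the boundary ordinal itself) bounds the newly non-convex pieces by $k+1$ and closes the induction. Your route buys a lighter argument --- no notion of pure mixed sum, no block-containment lemma, no strengthened induction hypothesis --- at the price of having to verify by hand that the aligned partition really realizes the natural sum (routine from the Cantor normal forms) and the small pigeonhole argument; the paper's route packages the convexity bookkeeping into a reusable structural invariant. Both are sound; the only point where you should be slightly more explicit is the disjointness argument just mentioned, which is the load-bearing step replacing the paper's Lemma \ref{blockrep}.
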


\begin{corollary} \labbel{mixx}
In particular, $ \sumh _{ \gamma < \zeta} \alpha _ \gamma  $
is a mixed sum of  
$(\alpha_ \gamma ) _{ \gamma < \zeta } $
and it can be realized in such a way that (1) and (2)
above hold.
Moreover, $ \sumh _{ \gamma < \zeta} \alpha _ \gamma  $  is the largest 
 mixed sum of $(\alpha_ \gamma ) _{ \gamma < \zeta } $
that can be realized in such a way that the following condition
(weaker than (1) above) is satisfied.
  \begin{enumerate}[(1)]    
\item [(3)]
For every $\varepsilon < \zeta $ and
$a \in A_ \varepsilon $, the set
$ \{ \gamma < \zeta  \mid \gamma > \varepsilon 
\text{ and } b< a, \text{ for some } b \in A_ \gamma   \}$  is finite.  
\end{enumerate}

In particular, 
$ \sumh _{ \gamma < \zeta} \alpha _ \gamma  $ is also
 the largest 
 mixed sum of $(\alpha_ \gamma ) _{ \gamma < \zeta } $
that can be realized in such a way that both (1) and (2) hold.
 \end{corollary}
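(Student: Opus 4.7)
The first assertion, that $\sh_\zeta$ is a mixed sum of $(\alpha_\gamma)_{\gamma<\zeta}$ realized with conditions (1) and (2), is an immediate specialization of Theorem \ref{mix}: choose $G=\zeta$, so that by Definition \ref{stages} one has $S^G_\zeta = \sh_\zeta$, and the realization provided by the theorem satisfies (1) and (2). Note that (1) formally implies (3), since the set described in (3) is contained in the set $\Gamma_\varepsilon$ of (1); hence maximality under the weaker condition (3) will automatically give maximality under the conjunction of (1) and (2). So I only need to show that every mixed sum $\beta'$ realized by some $(A_\gamma)_{\gamma<\zeta}$ satisfying (3) satisfies $\beta'\leq\sh_\zeta$. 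My plan is transfinite induction on $\zeta$.

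The base case $\zeta=0$ is trivial. For $\zeta=\eta+1$, set $B=\bigcup_{\gamma<\eta}A_\gamma$ with the order induced from $\beta'$, and let $\tau=\otp(B)$. Then $(A_\gamma)_{\gamma<\eta}$ realizes $\tau$ as a mixed sum of $(\alpha_\gamma)_{\gamma<\eta}$, and (3) is clearly inherited (the relevant sets shrink), so $\tau\leq\sh_\eta$ by the inductive hypothesis. But now $\beta'$ is a mixed sum of the \emph{two} ordinals $\tau$ and $\alpha_\eta$, realized by $B$ and $A_\eta$; Carruth's theorem (quoted just before Theorem \ref{mix}) gives $\beta'\leq\tau\+\alpha_\eta\leq\sh_\eta\+\alpha_\eta=\sh_{\eta+1}$.

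For $\zeta$ a limit, it is enough to show $\beta''+1\leq\sh_\zeta$ for every $\beta''<\beta'$. Given such $\beta''$, let $\bar\varepsilon$ be the unique index with $\beta''\in A_{\bar\varepsilon}$. By disjointness of the pieces, any $\gamma\neq\bar\varepsilon$ with $A_\gamma\cap[0,\beta'']\neq\emptyset$ contains some $b<\beta''$; so by (3) applied at $\varepsilon=\bar\varepsilon$ and $a=\beta''$, only finitely many indices $\gamma>\bar\varepsilon$ meet $[0,\beta'']$. Pick $\delta<\zeta$ exceeding $\bar\varepsilon$ and each of those finitely many indices; this is possible since $\zeta$ is limit. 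Then the family $(A_\gamma\cap[0,\beta''])_{\gamma<\delta}$ realizes $\beta''+1$ as a mixed sum of $(\alpha'_\gamma)_{\gamma<\delta}$, where $\alpha'_\gamma\leq\alpha_\gamma$ because $A_\gamma\cap[0,\beta'']$ is an initial segment of $A_\gamma$. Condition (3) is inherited from the original realization, so the inductive hypothesis together with Proposition \ref{factsww}(2) yield $\beta''+1\leq\sumh_{\gamma<\delta}\alpha'_\gamma\leq\sh_\delta\leq\sh_\zeta$, as required.

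The main obstacle is the bookkeeping in the limit step: one must produce a single $\delta<\zeta$ that captures every index contributing to a given initial segment $[0,\beta'']$ of $\beta'$. Condition (3) supplies this, provided $\zeta$ is a limit; this is precisely why the pointwise ``for each $a$'' version (3) is already strong enough to force $\beta'\leq\sh_\zeta$, even though the uniform condition (1) would give more.
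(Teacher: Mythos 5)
Your proof is correct, and its overall skeleton (reduce to maximality under (3), induct on $\zeta$, use Carruth's theorem at successor stages) coincides with the paper's. The one place where you genuinely diverge is the limit step. The paper first notes that (when the sequence is not eventually zero) $\beta'$ must be a limit ordinal, fixes $\xi<\beta'$ with $\xi\in A_\varepsilon$, and decomposes $[0,\xi)$ as a \emph{finite} mixed sum of $C=\bigcup_{\gamma<\varepsilon}A_\gamma$ together with the finitely many traces $C_i=A_{\gamma_i}\cap[0,\xi)$ for $\gamma_i>\varepsilon$; it then applies the inductive hypothesis only to $C$ (at index $\varepsilon$), Carruth's theorem to the finite decomposition, and Proposition \ref{factsww}(5) to absorb the extra finitely many terms $\alpha_{\gamma_0}\+\dots\+\alpha_{\gamma_n}$ into $\sh_\zeta$. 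You instead choose a single $\delta<\zeta$ dominating $\bar\varepsilon$ and all the finitely many offending indices, restrict the whole family to $[0,\beta'']$, and apply the inductive hypothesis to the full restricted sequence of length $\delta$, finishing with monotonicity (Proposition \ref{factsww}(2),(3)). Your variant buys a small simplification: by proving $\beta''+1\leq\sh_\zeta$ for each $\beta''<\beta'$ you avoid both the separate treatment of eventually-zero sequences and the observation that $\beta'$ is a limit ordinal, and you do not need Proposition \ref{factsww}(5); the cost is that you invoke the inductive hypothesis for an arbitrary sequence of length $\delta<\zeta$ rather than only for the initial segment below $\varepsilon$, which is harmless since the induction is on $\zeta$ over all sequences. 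Both arguments rest on the same key consequence of (3): only finitely many later pieces can intrude below a fixed point of $\beta'$.
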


Before we can prove Theorem  \ref{mix} and Corollary \ref{mixx} 
we need to introduce some auxiliary  definitions.
These will be needed in order to state a stronger version
of condition (2) above. The stronger condition will
make the inductive proof easier. 

\begin{definition} \labbel{blocks}
Suppose that $ \delta  > 0$ is an ordinal.
 Express $ \delta $ in Cantor normal form as
$ \delta   =
 \omega ^ {\xi_k} r_k + \omega ^ {\xi _{k-1}}r _{k-1}    +
\dots
+ \omega ^ {\xi_1} r_1 + \omega ^ {\xi_0}r_0 $. 
We say that a subset $B$ of $ \delta $ 
is a \emph{block of $ \delta $} if
$B $ has the form
$[ \beta _1, \beta _2) =
\{ \beta' \mid \beta_1 \leq \beta' < \beta_2  \} $,
 for some $\beta_1$ and
$\beta_2$ 
of the form  $\beta_1= \omega ^ {\xi_k} r_k +  \dots +\omega ^ {\xi _{\ell}}s $
and $ \beta _2 = 
\omega ^ {\xi_k} r_k +  \dots +\omega ^ {\xi _{\ell}}(s+1)$,
where    $\omega ^ {\xi _{\ell}}$
actually appears in the normal expression 
of $ \delta $ and $s+1 \leq r_ \ell$
(we allow $s=0$, and we allow $\ell= k$,
thus $\beta_1$ is allowed to be $0$).
Notice that a block has order type
$ \omega ^ \xi$, for some $\xi$ 
(the block in the previous statement
has order type  $ \omega ^{\xi _ \ell}$).
  
Essentially, in the above terminology, the Cantor normal form
of $ \delta $ provides the way of realizing $ \delta $  as a finite
sequence of blocks, one put after the other in decreasing order 
with respect to length
(here and below,  \emph{decreasing} 
is intended in the broader sense, not necessarily in the sense
of \emph{strictly decreasing}).
 \end{definition}   

\begin{definition} \labbel{natsumbl} 
Suppose that $\beta$ is a mixed sum 
of $\alpha_1$ and $\alpha_2$, 
realized by 
$A_1$, $A_2$.
By a slight abuse of terminology,
we call $B \subseteq  A_1$ 
a \emph{block of $A_1$}
if $B$ is the image of some block of $\alpha_1$
under the order preserving bijection
(recall that, by definition, $A_1$ has order type $\alpha_1$),
and the same for $B \subseteq A_2$.
Notice that a block of $A_1$
is not necessarily also a block of $\beta$ 
(see below for details). 

We say that the realization
$A_1$, $A_2$ is \emph{pure}
 if, given a block $B_1$ of $A_1$
and a block $B_2$ of $A_2$,
either all elements of $B_1$ precede
all elements of $B_2$, or conversely.
In other words, a realization is pure
if all blocks from $A_1$ and all blocks 
from $A_2$ are convex subsets of   $ \beta =A_1 \cup A_2$.

We say that $\beta$ is a \emph{pure mixed sum} 
of $\alpha_1$ and $\alpha_2$
if it has some pure realization.
\end{definition}

What will be relevant here is that if
$\beta =\alpha_1 \+\alpha_2$,
then $\beta$ has a pure realization.
This is the standard way to show
that $\alpha_1 \+\alpha_2$ is 
 a mixed sum 
of $\alpha_1$ and $\alpha_2$:
just take all the blocks from both $\alpha_1 $ and $ \alpha_2$
and ``put them together'' ordered by decreasing length.
In this case, the blocks of $\beta$ are exactly
the (images of the) blocks of  $\alpha_1$ and of
$\alpha_2$.  

Notice that not every mixed sum is pure.
For example, $ \omega$ is a mixed sum of
$\alpha_1 = \omega $ and $ \alpha_2 = \omega $,
but the only pure mixed sum of 
$\alpha_1 $ and $ \alpha_2$ is $ \omega + \omega $.
Notice also that, 
for every $\alpha_1$ and $\alpha_2$,  
both $\alpha_1 +\alpha_2$
and
$\alpha_2 +\alpha_1$
are pure mixed sums of
$\alpha_1 $ \and $ \alpha_2$.
This shows that if $A_1$, $A_2$ is a pure realization 
of some $\beta$,   it is not necessarily the case that 
every block of $A_1$ (or of $A_2$) is also a block  of $\beta$.
For example, if $\alpha_1=1$, $\alpha_ 2 =  \omega $ and
$\beta= \alpha _1 + \alpha _2 = \omega $, we get
the only pure realization of $\beta$ by putting all the elements
of $\alpha_2$ after the element of $\alpha_1$.
In this example, $\beta$ is a single block, which is the union of a 
block from $\alpha_1$ and a block from  $\alpha_2$.      

On the other hand,  we do have that, in 
a pure realization, each block of the components is
a \emph{subset} of some block of the mixed sum.

\begin{lemma} \labbel{blockrep}
Suppose that $A_1$, $A_2$ is a pure realization of $\beta$.
Then each block of $A_1$ is contained in some block
of $\beta$, and similarly each block of $A_2$ is contained in some block
of $\beta$.
 \end{lemma}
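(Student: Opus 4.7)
The plan is to use the pure realization to present $\beta$ as a concrete ordinal sum of $\omega$-powers, and then to identify the blocks of $\beta$ via the standard absorption rule $\omega^a+\omega^b=\omega^b$ valid for $a<b$.

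First, by purity, every block of $A_1$ and every block of $A_2$ is a convex subset of $\beta$. Listing all these blocks in the order in which they appear in $\beta$ as $C_1,C_2,\dots,C_m$, with $C_i$ of order type $\omega^{\zeta_i}$, we obtain $\beta=\omega^{\zeta_1}+\omega^{\zeta_2}+\dots+\omega^{\zeta_m}$ as an ordinal sum, and $C_i=[\sigma_{i-1},\sigma_i)$, where $\sigma_i=\omega^{\zeta_1}+\dots+\omega^{\zeta_i}$. Since every block of $A_1$, and every block of $A_2$, appears as one of the $C_i$'s, it will suffice to produce, for each $i$, a block of $\beta$ containing $C_i$.

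Next, I would extract the Cantor normal form of $\beta$ from this sum by iterated application of the absorption rule. Call an index $i\leq m$ \emph{surviving} if $\zeta_i\geq\zeta_{i'}$ for every $i<i'\leq m$, and let $j_1<j_2<\dots<j_p=m$ enumerate the surviving indices. Then the exponents $\zeta_{j_1}\geq\zeta_{j_2}\geq\dots\geq\zeta_{j_p}$ are weakly decreasing, and the successive absorptions give $\beta=\omega^{\zeta_{j_1}}+\omega^{\zeta_{j_2}}+\dots+\omega^{\zeta_{j_p}}$, which is the Cantor normal form of $\beta$. Hence, directly from the definition of blocks of $\beta$, the blocks of $\beta$ are exactly the intervals $[\tau_{k-1},\tau_k)$ for $k=1,\dots,p$, where $\tau_k=\omega^{\zeta_{j_1}}+\dots+\omega^{\zeta_{j_k}}$.

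The crucial identity I would then establish is $\tau_k=\sigma_{j_k}$ for every $k$. Its heart is a small combinatorial remark: for any $i$ with $j_{k-1}<i<j_k$, the index $i$ is non-surviving, so $\zeta_i<\zeta_{i'}$ for some $i'>i$; combined with the fact that $j_k$ is surviving and is the next surviving index after $j_{k-1}$, one obtains $\zeta_i<\zeta_{j_k}$. Consequently, the absorption computation applied to $\sigma_{j_k}=\omega^{\zeta_1}+\dots+\omega^{\zeta_{j_k}}$ leaves exactly the terms indexed by $j_1,\dots,j_k$, giving $\sigma_{j_k}=\tau_k$. Granted this, for any $i$ with $j_{k-1}<i\leq j_k$ we have $\tau_{k-1}=\sigma_{j_{k-1}}\leq\sigma_{i-1}$ and $\sigma_i\leq\sigma_{j_k}=\tau_k$, whence $C_i\subseteq[\tau_{k-1},\tau_k)$, a block of $\beta$, as desired. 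The main obstacle is precisely this identification $\tau_k=\sigma_{j_k}$: one must see that the concatenation positions of $\beta$ at the surviving indices coincide exactly with the block boundaries of $\beta$ coming from its Cantor normal form. Once the combinatorial control of the intermediate exponents is in hand, the remaining verifications are routine bookkeeping.
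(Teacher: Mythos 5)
Your proof is correct and rests on the same key idea as the paper's, namely the absorption rule $\omega^a+\omega^b=\omega^b$ for $a<b$ applied to the concatenation of all blocks; the only difference is organizational, in that you compute the Cantor normal form of the whole concatenation globally via the surviving indices, whereas the paper argues locally, block by block, by a two-case analysis on whether a strictly longer block of the other summand occurs later. Your global bookkeeping (in particular the identity $\tau_k=\sigma_{j_k}$) is a somewhat more explicit rendering of what the paper leaves at the informal level of ``$B_1$ is absorbed by $B_2$.''
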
 

\begin{proof}
Indeed, suppose that $B_1$ 
 is a block of $A_1$ and, say,
$B_1$ has
type $ \omega^ \xi $.
There are two cases.
Either $B_1$ precedes some block from $A_2$
of type $ \omega ^ \eta$, for some $ \eta > \xi$,
or all blocks from $A_2$ which are after $B_1$ have type
$\leq  \omega^ \xi$.
We can speak of ordering between blocks 
since the realization is pure.

In the first case, let $B_2$ be the first block from $A_2$ 
which lies after $B_1$.
Notice that, by definition, all blocks of $A_2$ are disposed
in decreasing length, hence if some block
of $A_2$ after $B_1$ is strictly longer than $B_1$,
this is also true for the first block   
of $A_2$ after $B_1$.
Hence $B_2$ has type, say, $ \omega ^ \eta > \omega ^ \xi$.
Then $B_1$ is 
 ``absorbed''  by $B_2$,
that is, they lie in the same block of $\beta$
(if there are further blocks of $A_1$ between $B_1$ and
$B_2$, they are either as long as or shorter than $B_1$, hence they, too,
are absorbed by $B_2$).

By the same reason, in the second case, $B_1$ is contained in a block of $\beta$,
since all blocks after $B_1$, either from $A_1$ or from $A_2$
are shorter than or as long as $B_1$
($B_1$ might absorb some block of $A_2$ which
lies before it, hence $B_1$ is not necessarily a block of $\beta$).   
 
A symmetrical argument works for each block of $A_2$, thus the lemma is 
proved. 
 \end{proof}

Probably the notion of a pure mixed sum
(both in the case of a finite number
and of an infinite number of summands)
deserves further study, but we shall not pursue it here. 

\begin{lemma} \labbel{mixlem}
Theorem  \ref{mix} holds
when clause (2) there is strengthened to 
  \begin{enumerate}[(1)]   
 \item[(2$\,'$)]
There is some finite $F \subseteq \zeta $
such that  
if $\varepsilon \in \zeta  \setminus F$, 
then  $A_ \varepsilon$ is
convex and is contained in some block of $\beta$. 
 \end{enumerate} 
 \end{lemma}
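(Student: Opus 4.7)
The plan is a transfinite induction on $\zeta$, maintaining at every stage a finite set $F=F_\zeta \subseteq \zeta$ witnessing condition (2$\,'$). The case $\zeta=0$ is immediate.

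At a successor $\zeta=\zeta'+1$, I would apply the inductive hypothesis to $(\alpha_\gamma)_{\gamma<\zeta'}$ to get a realization $(A_\gamma)_{\gamma<\zeta'}$ of $\beta'=\sumg_{\gamma<\zeta'}\alpha_\gamma$ with a finite $F'\subseteq \zeta'$. If $\zeta'\notin G$, so that $\beta=\beta'+\alpha_{\zeta'}$, I would adjoin $A_{\zeta'}$ as the terminal interval $[\beta',\beta)$. If $\zeta'\in G$, so that $\beta=\beta'\+ \alpha_{\zeta'}$, I would use the standard pure mixed sum realization: list all blocks of $\beta'$ and all blocks of $\alpha_{\zeta'}$ in decreasing size, placing (by convention) those from $\beta'$ before those from $\alpha_{\zeta'}$ when sizes coincide. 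This produces an order-preserving embedding $\beta'\hookrightarrow \beta$ that sends each block of $\beta'$ bijectively onto a block of $\beta$ of the same size, so every $A_\gamma$ previously contained in a block of $\beta'$ remains contained in a block of $\beta$. The same containment holds in the ordinal-sum sub-case, because forming $\beta'+\alpha_{\zeta'}$ only alters the block structure of $\beta'$ by merging some short trailing blocks into one larger block of $\beta$. Condition (1) is preserved since the embedding is order-preserving (so the old $\Gamma_\varepsilon$ are unchanged) and the only new index $\zeta'$ can add at most one element to any $\Gamma_\varepsilon$. I set $F_\zeta:=F'\cup\{\zeta'\}$, which remains finite.

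The limit case is the heart of the argument. If $(\alpha_\gamma)_{\gamma<\zeta}$ is eventually zero, I reduce to a proper initial segment and apply induction there. Otherwise, Theorem~\ref{infsum} and Corollary~\ref{computeg} supply some $\bar\gamma<\zeta$ and some $\xi$ with $\beta=S^G_{\bar\gamma}+\omega^\xi$ and $\sum_{\bar\gamma\leq\gamma<\zeta}\alpha_\gamma=\omega^\xi$ (ordinal sum). Applying the induction to $\bar\gamma$ yields a realization $(A_\gamma)_{\gamma<\bar\gamma}$ of $S^G_{\bar\gamma}$ with finite $F'\subseteq \bar\gamma$. I identify this realization with the initial segment $[0,S^G_{\bar\gamma})$ of $\beta$ and tile the final block $[S^G_{\bar\gamma},\beta)$ (of order type $\omega^\xi$) by convex intervals $A_\gamma$ of respective order types $\alpha_\gamma$ in increasing order of $\gamma$, for $\gamma\geq \bar\gamma$; this tiling exists precisely because the ordinal sum of the corresponding $\alpha_\gamma$'s equals $\omega^\xi$. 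For (1), each new $A_\gamma$ lies above $S^G_{\bar\gamma}$ and the tiling is in order of $\gamma$, so $\Gamma_\varepsilon$ is unchanged for $\varepsilon<\bar\gamma$ and empty for $\varepsilon\geq \bar\gamma$. For (2$\,'$), each new $A_\gamma$ is a convex subset of the last block of $\beta$, so I can take $F_\zeta:=F'$.

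The main obstacle I anticipate is guaranteeing that $F$ stays finite through arbitrarily long transfinite iteration: each successor step can add one exceptional index, so a naive construction could accumulate infinitely many exceptions before reaching some limit ordinal. The decisive input is Theorem~\ref{infsum}, which at every limit stage collapses everything past a suitably chosen $\bar\gamma$ into a single ordinal-sum block of type $\omega^\xi$; this allows the limit construction to inherit $F'$ directly from the induction at $\bar\gamma$ with no new additions, effectively resetting the exceptional set at each limit ordinal. The strengthening of (2) to (2$\,'$) is precisely what makes this reset meaningful, since it forces the bulk of the sequence to sit inside a single block of $\beta$ whose size is read off from Theorem~\ref{infsum}.
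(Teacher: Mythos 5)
Your proposal is correct and follows essentially the same route as the paper's proof: induction on $\zeta$, with the pure mixed-sum realization (blocks in decreasing order) handling successor steps in $G$, and Theorem~\ref{infsum} together with Corollary~\ref{computeg} collapsing the tail beyond $\bar\gamma$ into a single $\omega^\xi$ piece at limit stages, which is exactly the paper's mechanism for keeping $F$ finite. The only cosmetic imprecision is that $[S^G_{\bar\gamma},\beta)$ need not itself be a block of $\beta$ (it may absorb trailing blocks of $S^G_{\bar\gamma}$), but since containment in a block is all that condition (2$\,'$) requires, this does not affect the argument.
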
 

\begin{proof}
By induction on $\zeta$.

The result is trivial for $\zeta=0$.

Suppose that $ \zeta = \delta +1$.
If $\delta \not \in G$,
then the recursive definition \ref{stages}
gives
$ \beta = S^G _ \zeta  = S^G_ \delta + \alpha _ \delta $.    
By the inductive hypothesis, 
 $ S^G_ \delta $ is a mixed sum of  
$(\alpha_ \gamma ) _{ \gamma < \delta } $,
and can be realized by $(A_ \gamma ) _{ \gamma < \delta } $
in such a way that
(1) from \ref{mix} and 
(2$\,'$) from the present lemma are satisfied.
But then, letting $A_ \delta $ be a copy of $\alpha_ \delta $,
and adding  $A_ \zeta $  ``at the top'', we get that
$(A_ \gamma ) _{ \gamma < \zeta  } $ realizes $ \beta = S^G _ \zeta$
and trivially satisfies (1) and  (2$\,'$).

Next suppose that  $ \zeta = \delta +1$ and
$\delta  \in G$.
Then Definition \ref{stages}
gives
$ \beta = S^G _ \zeta  = S^G_ \delta \+ \alpha _ \delta $.    
Again by the inductive hypothesis, 
 $ S^G_ \delta $
 can be realized by some sequence $(A_ \gamma ) _{ \gamma < \delta } $
which satisfies 
(1)  and 
(2$\,'$).
In particular,
$ \bigcup _{ \gamma < \delta } A_ \gamma$
is  the ordinal $S^G_ \delta$.
By the remark after Definition \ref{natsumbl},
$ \beta =  S^G_ \delta \+ \alpha _ \delta $
can be realized as a pure mixed sum
by $A$, $A_ \delta $,
where $A$ has order type $S^G_ \delta$ and
$A_ \delta $ has order type $\alpha_ \delta $. Letting
$\varphi:S^G_ \delta  \to A$.
be the order preserving bijection and  
setting
$A' _ \gamma =  \varphi (A_ \gamma ) $,
for $\gamma < \delta $,
and   $A' _ \delta  =  A_ \delta  $,
then clearly
$(A'_ \gamma ) _{ \gamma < \zeta  } $ is a realization of  $ \beta = S^G _ \zeta$.
Now condition (1) in 
\ref{mix} is satisfied, since, for each $\varepsilon < \delta $,
the construction adds at most one element to 
$\Gamma _ \varepsilon $; indeed, the only element 
which perhaps should be added is $\delta$.
On the other hand, 
$\Gamma _ \delta $   
is empty, thus 
$(A'_ \gamma ) _{ \gamma < \zeta  } $
satisfies (1). 
As far as 
(2$\,'$) is concerned, 
we have by the inductive hypothesis that 
$(A_ \gamma ) _{ \gamma < \delta } $
gives a realization of 
$ S^G_ \delta $
such that  (2$\,'$) is satisfied, thus 
there is a finite $F \subseteq \delta  $
such that  
if $\varepsilon \in \delta   \setminus F$, 
then  $A_ \varepsilon$ is
convex and  contained in some block of $ S^G_ \delta $.
By construction, blocks of $ S^G_ \delta $ are sent by 
$\varphi$  
to blocks of $A$; moreover,
since 
$ S^G _ \zeta  = S^G_ \delta \+ \alpha _ \delta $
is realized as a pure mixed sum
by $A$, $A_ \delta $,
then every block of 
$A$ is contained in some block of
$ S^G _ \zeta$, by Lemma \ref{blockrep}.
Thus
if $\varepsilon \in \delta   \setminus F$, 
then  $A'_ \varepsilon$ is
 contained in some block of $ S^G_ \zeta $.
Moreover, 
if $\varepsilon \in \delta   \setminus F$, then
$A'_ \varepsilon$ is
convex as a subset of $A$; but then
$A'_ \varepsilon$ is also
convex in $ S^G_ \zeta $,
since 
$A'_ \varepsilon$  is convex in $A$,
$A'_ \varepsilon$  is contained in some block of $A$, 
and all the  blocks of $A$ are convex in $ S^G_ \zeta $,
the realization of $ S^G_ \zeta $ being pure.
Hence  (2$\,'$) holds at step $\zeta$ by taking 
$F' = F \cup \{ \delta \} $. 

Suppose now that $\zeta$ is a limit ordinal.
If the sequence $( \alpha _ \gamma ) _{ \gamma < \zeta  } $ is constantly
zero from some point on, 
then the result is immediate from the inductive hypothesis.
Otherwise,
 let $\bar{ \gamma }$ be given by Theorem \ref{infsum}.
By the inductive hypothesis, 
 $ S^G_ {\bar{ \gamma }}$
 can be realized by some sequence $(A_ \gamma ) _{ \gamma < \bar{ \gamma } } $
which satisfies 
(1)  and 
(2$\,'$). 
By equation  \eqref{eqsums} 
in Theorem \ref{infsum},
 $\sum _{ \bar{ \gamma } \leq \gamma < \zeta } \alpha _{ \gamma}
= \omega ^ \xi$.
Then 
the order-theoretical characterization of $\sum $ shows 
 that $\omega ^ \xi$ can be represented as a mixed
sum of 
$(\alpha _{ \gamma})  _{ \bar{ \gamma } \leq \gamma < \zeta } $ 
in such a way that 
all the pieces realizing the mixed sum are convex in the realization
and, moreover, they are disposed in the same order
as the corresponding 
$\alpha _{ \gamma}$'s. 
If we join the two representations by putting
all the elements representing 
$\omega ^ \xi$ above the elements representing
$ S^G_ {\bar{ \gamma }}$,
we get the ordinal 
$ S^G_ {\bar{ \gamma }} + \omega ^ \xi$,
which is equal to 
$ \beta =  S^G_ \zeta $, by Corollary \ref{computeg}.
This new representation clearly satisfies  
(1); indeed, 
$\Gamma _ \varepsilon = \emptyset $,
if $\varepsilon \geq \bar{ \gamma } $,
and
$\Gamma _ \varepsilon  $
remains the same of the sequence $(A_ \gamma ) _{ \gamma < \bar{ \gamma } } $,
if $\varepsilon < \bar{ \gamma } $.
Also
(2$\,'$) is satisfied,
since the new elements of the representation
(those with $\varepsilon \geq \bar{ \gamma } $)
are all contained in the single block
corresponding to 
$\omega ^ \xi$, hence $F$ does not become larger.
Notice that $\omega ^ \xi$ could absorb
some other blocks 
(of $ S^G_ {\bar{ \gamma }}$)
below it, 
but, even in case this happens, 
$\omega ^ \xi$ absorbs the whole of 
such blocks, hence each $A_ \gamma $,
for
$   \gamma < \bar{ \gamma } $,
is contained in a single block anyway,
in the representation of $\beta$. 

 We have finished the proof of 
Lemma \ref{mixlem}, hence of Theorem \ref{mix},
as well.  
\end{proof}

\begin{proof}[Proof of \ref{mixx}] 
Since, as we have noticed before,  
 $ \sumh _{ \gamma < \zeta} \alpha _ \gamma  $
is the particular case of 
$\sumg _{ \gamma < \zeta} \alpha _ \gamma  $
when $G= \zeta $,
we get from Theorem \ref{mix} that  
 $ \sumh _{ \gamma < \zeta} \alpha _ \gamma  $
is a mixed sum of 
$( \alpha _ \gamma ) _{ \gamma < \zeta  } $
and can be realized in such a way that 
(1) and (2) from  \ref{mix} are satisfied, hence (3), too,
is satisfied.
It remains to show that  
 $ \sumh _{ \gamma < \zeta} \alpha _ \gamma  $
is the largest one among those mixed sums that satisfy (3).
Again, this is proved by induction on 
$\zeta$. 

The result is trivial if $\zeta=0$. 

Let $\zeta= \delta +1$ be a successor ordinal and 
 let $ \beta' $ be any mixed sum of 
$( \alpha _ \gamma ) _{ \gamma < \zeta  } $
realized by 
$( A _ \gamma ) _{ \gamma < \zeta  } $ in such a way that 
(3) is satisfied.
Set $A= \bigcup _{ \gamma < \delta } A_ \gamma  $
 and suppose that $A$, as a subset of $\beta'$,  has order type $\beta''$.  
Then, through a suitable bijection, 
$( A _ \gamma ) _{ \gamma < \delta  } $
gives a realization of $\beta''$, and this realization
trivially  satisfies (3), since the original realization
$( A _ \gamma ) _{ \gamma < \zeta  } $  satisfies (3).
By the inductive hypothesis, $\beta'' \leq S_ \delta $,
hence, since the pair $A$, $A_ \delta $ gives a representation
of  $\beta'$  as a mixed sum
of   $\beta''$ and $\alpha_ \delta $,
we get, by Carruth Theorem and Definition \ref{iterated}, that 
$ \beta ' \leq \beta'' \+\alpha_ \delta \leq  S_ \delta  \+ \alpha _ \delta = S_ \zeta $. 
The successor step has thus been proved.

Let $\zeta$ be a limit ordinal.
If the sequence $( \alpha _ \gamma ) _{ \gamma < \zeta  } $ is constantly
zero from some point on, 
then the result follows trivially from the inductive hypothesis,
hence we can suppose that 
  $( \alpha _ \gamma ) _{ \gamma < \zeta  } $ is not eventually zero.
 Let again $ \beta' $ be any mixed sum of 
$( \alpha _ \gamma ) _{ \gamma < \zeta  } $
realized by 
$( A _ \gamma ) _{ \gamma < \zeta  } $ in such a way that 
(3) is satisfied.
Since the sequence is not eventually zero
and $\zeta$ is a limit ordinal, then, by (3),
$\beta'$, too,  is a limit ordinal.
Hence it is enough to show that,
for every $\xi < \beta '$, we have $\xi \leq S_ \zeta $.  

So let $\xi < \beta '$, say,
$\xi \in A_ \varepsilon  $. 
By (3), the set
$ \{ \gamma < \zeta  \mid \gamma > \varepsilon 
\text{ and } b< \xi, \text{ for some } b \in A_ \gamma   \}$  is finite;
enumerate it as $\gamma_0, \dots, \gamma $
and let $C_{ i} =  A _{ \gamma _i} \cap [0, \xi)$,
for $i=0, \dots, n$. 
Setting $C  = \bigcup _{ \gamma < \varepsilon  } A_ \gamma  $,
we have that $\xi$ is a mixed sum of $C$,  $C_0$, \dots $C_n$.  
For each $i$, if $\beta_i$ is the order type of $C_i$, 
then $\beta_i \leq \alpha  _{ \gamma _i}$,
since the latter is 
the order type
of  $ A _{ \gamma _i} $ and $C_i \subseteq  A _{ \gamma _i} $.
Moreover, by the inductive hypothesis, 
if $\beta''$ is
the order type of $C$, then  
$ \beta '' \leq S_ \varepsilon  $. 
Since $\xi$ is a mixed sum of $C$,  $C_0$, \dots $C_n$,
then, again by Carruth Theorem, 
$\xi
 \leq 
\beta '' \+ \beta_0 \+ \dots \+ \beta _n 
\leq 
S_ \varepsilon  \+  \alpha  _{ \gamma _0} \+ \dots \+  \alpha  _{ \gamma _n}
\leq 
S_ \zeta $,
 where, in order to get the last inequality, notice that, by construction,
$\gamma _0, \dots, \gamma _n > \varepsilon $,
hence 
the inequality follows from 
Proposition \ref{factsww}(5). 
 \end{proof}   
 
Notice that the proof also shows that 
$ \sumh _{ \gamma < \zeta} \alpha _ \gamma  $
can be realized in such a way that
(1) from \ref{mix} and (2$'$) from \ref{mixlem}
are satisfied.

\section{Invariant infinite natural sums} \labbel{furth}

\subsection*{The problem of invariance under permutations} 
All the previous notions and  results
are dependent on the order in which the 
$\alpha_ \gamma  $'s appear in the sequence
$(\alpha_ \gamma ) _{ \gamma < \zeta } $.
In particular, for $\zeta > \omega $,
the value of  $ \sumh _{ \gamma < \zeta} \alpha _ \gamma  $
is generally \emph{not} invariant under permutations
(invariance holds, however, for a very special class of permutations,
as we shall mention in Proposition \ref{factswww} below).
To see that a transfinite sum is not necessarily
invariant, just take $\zeta= \omega +1$, $\alpha_0= 0$
and   $\alpha_ \gamma = 1$, for $0 < \gamma < \omega +1$,
thus   $ \sumh _{ \gamma < \zeta} \alpha _ \gamma = \omega +1 $.
On the other hand, if we permute $\alpha_0$ and 
$\alpha_ \omega $, that is, we  reindex the 
$\alpha_ \gamma  $'s as $(\beta_ \gamma ) _{ \gamma < \omega +1} $  by letting
$ \beta _ \gamma = 1$, for $0 \leq \gamma < \omega $
and $\beta _{ \omega } =0 $, then
we get  $ \sumh _{ \gamma < \zeta} \beta _ \gamma = \omega \not= \omega +1 $.

Noninvariance of $\sumh$  under permutations 
strongly contrasts with \cite{w},
that is, with the case of $ \omega$-indexed sequences.
In fact, essentially all the results in \cite{w} are independent
from the chosen ordering of the $\alpha_i$'s and
some results there do not even mention 
the ordering.  For example, in \cite[Theorem 4.7]{w} we proved
that the natural sum of an $ \omega$-indexed sequence of ordinals
is the maximum 
of all the left-finite mixed sums
of the ordinals in the sequence.   
Recall the definition of a mixed sum from 
\ref{infmix}. The definition of left-finiteness
is recalled in the next definition. 

\begin{definition} \labbel{lf}    
A mixed sum $\beta$ of $( \alpha _i) _{i \in I}$  is \emph{left-finite} 
if it
can be realized by $(A_i) _{i \in I}$ 
in such a way that, for every $a \in \beta$,
 the set of all the elements smaller than $a$ 
is contained in the union of a finite number of $A_ i$'s.
\end{definition}

Notice that, in the specific case of  an $ \omega$-indexed sequence, 
left-finiteness is equivalent to condition (3) in Corollary \ref{mixx}; moreover,
 since we are assuming that $\zeta= \omega $,
 the condition is independent 
from the ordering of the sequence. 
Hence Theorem \ref{wt} can be obtained as a special
case of Corollary \ref{mixx}.
Notice also that it  is by no means trivial that
the set of all the left-finite mixed sums 
of some given $ \omega$-indexed sequence
has a maximum, not simply a supremum.

A na\"\i ve  approach 
in search of
 a generalization of the above mentioned 
Theorem 4.7 from \cite{w}
for, say, a sequence indexed by 
a set of cardinality $ \omega_1$, would be
to 
restrict oneself to 
\emph{left-countable} mixed sums, that is, asking that,
for every element $a \in \beta $, the set of all the elements smaller than $a$ 
is contained in the union of countably many $A_ \gamma $'s.
Here and below, by \emph{countable} we mean either finite
or denumerably infinite. 
However, fixed an $ \omega_1$-sequence of ordinals,
the set of the left-countable mixed sums of the sequence
might not have a maximum.
Just take $\alpha_ \gamma = \omega_1 $, for $\gamma < \omega $,
and   $\alpha_ \gamma = 1 $, for $\omega \leq \gamma < \omega _1$.
Every ordinal of the form $ \omega_1 \varepsilon $,
for $0 \not=\varepsilon < \omega _1$, is a  left-countable 
 mixed sum of 
$(\alpha_ \gamma ) _{ \gamma <  \omega _1 } $
but this is not the case for the supremum of the above values, i. e., 
$ \omega_1 ^2$. 
If $\varepsilon> \omega $, then
in the above example we can even realize the  left-countable
 mixed sum in such a way that all pieces are
convex.

The above example suggests that 
it will be difficult, or perhaps impossible, to find 
some natural infinitary generalization of the Hessenberg sum
for sequences indexed by a set which is not supposed to be (well-)ordered; or,
put in another way, that the countable case of the infinitary natural
sum is very special and, usually, results do not generalize 
to uncountable cardinals. In this respect, see also the remarks on
\cite[p. 370]{VW} and our review \cite{zbl} of \cite{VW}.
  In particular, it seems difficult to find some 
infinitary operation (on uncountably many arguments) which has some good
 purely order-theoretical characterization
and which does not rely on the ordering of the sequence.
See, however, \cite{ans} for a possible alternative approach 
to the problem. 
Notice that, on the other hand, when the sequence is well-ordered, 
Corollary  \ref{mixx} 
provides such an order-theoretical characterization
for the iterated natural sum of Definition \ref{iterated}.

\subsection*{Some invariant sums} 
Since,  in general, by the above example, it is probably not always 
possible to find
some kind of ``maximal sum'', we can at least 
define some minimal ones (which, by the very definition, will turn out to be 
automatically invariant under permutations).

\begin{definitions} \labbel{nats}
Suppose that $\zeta$ is an ordinal and
$( \alpha _ \gamma ) _{ \gamma < \zeta } $
is a sequence of ordinals.
Define
\begin{equation*}    
\sideset{}{^o}\nsum _{ \gamma < \zeta  } \alpha _ \gamma  = 
\inf_ \pi \sumh _{ \gamma < \zeta  }  \alpha _{ \pi( \gamma )} 
 \end{equation*} 
where $\pi$ varies among all the permutations of $\zeta$.
By a \emph{permutation of $\zeta$} we mean a
 bijection from $\zeta$ to $\zeta$.
Notice that in the above definition we are keeping $\zeta$ fixed.
Allowing $\zeta$ to change, we generally obtain different results;
for example, if $\alpha_ \gamma = 1$, for every $\gamma$, 
then $\sumh _{ \gamma < \zeta  } \alpha _ \gamma  =
\sum _{ \gamma < \zeta  } \alpha _ \gamma =
 \zeta $,
for every $\zeta$.  In the next
 definition, on the contrary, we let the ordinal vary.

Suppose that $I$ is \emph{any} set and
$( \alpha_i) _{i \in I} $ is a sequence of ordinals.
Define
 \begin{equation*}   
\nsum _{ i \in I } \alpha _i = 
\inf_{ \zeta , f} \sumh _{ \gamma < \zeta  }  \alpha _{ f( \gamma )} 
 \end{equation*} 
where $\zeta$ varies among all the ordinals
having cardinality $|I|$ and 
$f$ varies among all the bijections from $\zeta$ to $I$.
In this situation, we shall call $f$ a \emph{rearrangement}
of the sequence.   

Furthermore, 
let $\lambda=|I|$ and define
 \begin{equation*}  
\sideset{}{^\bullet}\nsum _{ i \in I } \alpha _i = 
\inf_{ f} \sumh _{ \gamma < \lambda  }  \alpha _{ f( \gamma )} 
 \end{equation*} 
where $f$ varies among all the bijections from $ \lambda $ to $I$.
 \end{definitions}   

The difference between 
$\nsum$ and $\nsum^\bullet$ is that in 
$\nsum $ we consider rearrangements into a sequence
of arbitrary length, while    
in 
$\nsum^\bullet$ we consider only rearrangements into a sequence
of length $|I|$. On the other hand,
in $\nsum ^o$ the length of the sequences is assumed to be fixed. 

For every $I$,
trivially, 
$\nsum _{ i \in I } \alpha _i \leq \nsum^\bullet_{ i \in I } \alpha _i$ 
and
if $|I| = | \zeta |$
and 
$( \alpha_i) _{i \in I} $  is a rearrangement of 
$( \alpha _ \gamma ) _{ \gamma < \zeta } $,
 then 
$  \nsum _{ i \in I } \alpha _i \leq \nsum ^o _{ \gamma < \zeta  } \alpha _ \gamma $.
Of course, we could have written the above inequality simply as
 $\nsum  _{ \gamma < \zeta  } \alpha _ \gamma  \leq \nsum ^o _{ \gamma < \zeta  } \alpha _ \gamma $, with no need of introducing rearrangements,
since an ordinal is, in particular, a set. However, it seems clearer
to use a letter such as $I$ for a set on which no particular order is defined,
and we shall usually obey this convention.
Though, as just mentioned, $\nsum$ is always $\leq$ than both  $ \nsum^\bullet$
and $ \nsum ^o$,    
on the other hand, in general,
there is no provable inequality
between
$ \nsum^\bullet$
and $ \nsum ^o$. Indeed, if
$\alpha_ \gamma = 1$, for every $\gamma < \omega +1$,   
then
$ \nsum^\bullet _{ \gamma < \omega +1  } \alpha _ \gamma = \omega <
\omega +1=
 \nsum ^o _{ \gamma < \omega +1  } \alpha _ \gamma $.
In the other direction, we shall show in the last 
sentence  in Example \ref{count}
that $ \nsum ^o$ can be strictly smaller than  $ \nsum^\bullet$.  

In any case, all the above operations coincide
for $ \omega$-indexed sequences. 
If $\zeta= |I|= \omega  $ and
$( \alpha_i) _{i \in I} $  is a rearrangement of 
$( \alpha _ \gamma ) _{ \gamma < \omega  } $,
then
\begin{equation}\labbel{idid}
 \sideset{}{^o}\nsum  _{ \gamma < \omega } \alpha _ \gamma 
=
 \sumh  _{ \gamma < \omega } \alpha _ \gamma 
=
\sideset{}{^\bullet}\nsum _{ i \in I } \alpha _i 
=
 \nsum _{ i \in I } \alpha _i
  \end{equation}    
 This shows that the 
notation in Definitions \ref{nats}
is consistent both with \cite{VW} and with 
\cite{w}.  
The first two identities in \eqref{idid} follow from
\cite[p. 362]{VW} or
\cite[Proposition 2.4(5)]{w}, to the effect that the $ \omega$-indexed
 natural sum is invariant under permutations,
a fact which shall be generalized in Proposition \ref{factswww} below. 
Notice that  invariance of 
$ \sumh  _{ \gamma < \omega } \alpha _ \gamma $ 
under permutations is also an immediate consequence 
of the order-theoretical characterization given 
in \cite[Theorem 4.7]{w} and mentioned in the
preceding subsection.
The last identity in \eqref{idid} shall be proved
in Proposition \ref{id} below. 
Before giving the proof, we provide a counterexample 
that shows the reason why the proof is not entirely trivial.

\begin{example} \labbel{count}   
It is somewhat surprising that
it is possible to have the strict inequality
$\nsum _{ i \in I }\alpha _i < \nsum _{ i \in I }^\bullet  \alpha _i$
in case $I$ is uncountable.
This is somewhat counterintuitive, since, allowing a longer sequence,
 we might get a smaller outcome.
Let $|I|= \omega _1$,
$\alpha_i \in \{ 1, \omega _1 \} $, for $i \in I$,
$|\{ i \in I \mid \alpha _i = 1 \}| = \omega _1$ and
$|\{ i \in I \mid \alpha _i =  \omega _1 \}| = \omega $.
For every bijection $f: \lambda \to I$, we have 
$\sumh _{ \gamma < \lambda  }  \alpha _{ f( \gamma )}
\geq  \omega_1( \omega +1) $,
in fact, we already have
$\sum _{ \gamma < \lambda  }  \alpha _{ f( \gamma )}
\geq  \omega_1( \omega +1) $.
Recall that sums and products are 
always intended in the ordinal
sense.
Thus $\nsum _{ i \in I }^\bullet \alpha _i \geq \omega_1( \omega +1) $,
in fact, 
$\nsum _{ i \in I }^\bullet \alpha _i = \omega_1( \omega +1) $,
the example giving the reverse inequality being easy. 

However, 
$\nsum _{ i \in I } \alpha _i  = \omega_1 \omega $. 
Indeed, let $\zeta= \omega _1 + \omega $,
$\beta_ \gamma = 1$, for $\gamma < \omega _1$,
and    
$\beta_ \gamma = \omega _1$, for $ \omega _1 \leq  \gamma < \omega _1 + \omega $.
Then $\sumh _{ \gamma < \omega _1 + \omega} \beta _ \gamma = 
\sum _{ \gamma < \omega _1 + \omega} \beta _ \gamma
= \omega_1 \omega$.
Since the $\beta_ \gamma $'s are a rearrangement of 
the $\alpha_i$'s, we get
$\nsum _{ i \in I } \alpha _i \leq  \omega_1 \omega $.
The reverse inequality is obvious.  

Notice that we have also showed
that
$ \nsum ^o_{ \gamma < \omega _1 + \omega} \beta _ \gamma = 
 \omega_1 \omega <  \omega_1( \omega +1)  = \nsum _{ i \in I }^\bullet \alpha _i $.
 \end{example}

Similar counterexamples  are well-known,
when  corresponding definitions are considered
relative to the usual ordinal sum
$\sum$ in place of $\sumh$.
See Rado \cite{Ra}, in particular, p. 219 therein,
where a counterexample is given even in the countable case.
It is quite interesting that, on the other hand, 
the natural sum is immune to such counterexamples, 
as far as countable sums are taken into account.

\begin{proposition} \labbel{id}
If $|I| \leq \omega $,
then $\nsum _{ i \in I }^\bullet \alpha _i 
=
 \nsum _{ i \in I } \alpha _i
$. 
 \end{proposition}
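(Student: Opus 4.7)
My plan is to establish the nontrivial inequality $\sideset{}{^\bullet}\nsum_{i \in I} \alpha_i \leq \nsum_{i \in I} \alpha_i$, since the reverse is immediate from the definitions. The case $|I| < \omega$ is vacuous: $|I|$ is then the only ordinal of cardinality $|I|$, so both infima range over exactly the same family of bijections. Hence I concentrate on $|I| = \omega$, where $\lambda = \omega$.

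My first step is to invoke the permutation-invariance of the $\omega$-indexed natural sum, already cited right after equation \eqref{idid}, to conclude that $\sumh_{\gamma < \omega} \alpha_{g(\gamma)}$ does not depend on the choice of bijection $g : \omega \to I$; call this common value $\sigma$, so that $\sideset{}{^\bullet}\nsum_{i \in I} \alpha_i = \sigma$. The goal then reduces to showing $\sigma \leq \sumh_{\gamma < \zeta} \alpha_{f(\gamma)}$ for every countable ordinal $\zeta$ and every bijection $f : \zeta \to I$. To tackle this, I would use the limit clause \eqref{transc} of Definition \ref{iterated} to rewrite
\begin{equation*}
\sigma = \sup_{n < \omega} \bigl( \alpha_{g(0)} \+ \alpha_{g(1)} \+ \cdots \+ \alpha_{g(n-1)} \bigr),
\end{equation*}
so that the inequality reduces to bounding each finite natural sum on the right by $\sumh_{\gamma < \zeta} \alpha_{f(\gamma)}$.

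For this finite bound I would fix $n$, set $\gamma_i = f^{-1}(g(i)) < \zeta$ for $i < n$ (these indices are distinct because $f$ and $g$ are bijections), and apply Proposition \ref{factsww}(5) with $\delta = \zeta$ and $\delta'' = 0$ to the finite selection $\gamma_0, \dots, \gamma_{n-1}$; this immediately gives
\begin{equation*}
\sumh_{\gamma < \zeta} \alpha_{f(\gamma)} \geq \alpha_{f(\gamma_0)} \+ \cdots \+ \alpha_{f(\gamma_{n-1})} = \alpha_{g(0)} \+ \cdots \+ \alpha_{g(n-1)},
\end{equation*}
and passing to the supremum in $n$ closes the argument. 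I do not anticipate any genuine obstacle: the whole proof rests on two facts already at hand, namely the $\omega$-invariance of $\sumh$ and clause (5) of Proposition \ref{factsww}. The conceptual point — and the reason the statement is not self-evident in view of Example \ref{count} — is that in a countable setting every finite partial sum accumulating toward $\sigma$ can be \emph{located inside} any transfinite enumeration of countable length by simply picking out finitely many indices, so a longer countable enumeration can only enlarge the total; in the uncountable regime this strategy breaks because one may be forced to pile up infinitely many nonzero contributions before a limit stage is reached.
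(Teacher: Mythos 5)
Your proof is correct, and it takes a genuinely different route from the paper's. The paper proves the nontrivial inequality by first computing the common value of the $\omega$-indexed sum in closed form via equation (6) in \cite[Corollary 5.1]{w}, namely $\sumh_{\gamma<\omega}\alpha_\gamma=\alpha_{i_0}^{\restriction\xi}\+\dots\+\alpha_{i_h}^{\restriction\xi}\+\omega^\xi$, where $\xi$ is the least ordinal such that only finitely many $\alpha_i$ are $\geq\omega^\xi$, and then showing that any rearrangement $(\beta_\delta)_{\delta<\zeta}$ dominates this value by extracting a type-$\omega$ subsequence of terms of suitable size, with a case split according to whether $\xi$ is a successor or a limit. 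You bypass the closed form entirely: you use only the limit clause \eqref{transc}, which exhibits $\sumh_{\gamma<\omega}\alpha_{g(\gamma)}$ as the supremum of its finite partial natural sums, together with the finite-subselection inequality of Proposition \ref{factsww}(5) applied with $\delta''=0$ (and you correctly note that the selected indices are distinct, which matters since the proof of that clause treats them as a set). Your argument is shorter, stays entirely inside the present paper rather than importing Corollary 5.1 of \cite{w}, and isolates exactly where countability enters: the index ordinal $\omega$ has no intermediate limit stages, so the whole sum is approximated by finite subsums, each of which embeds into any rearrangement of any length --- precisely the mechanism that breaks down in Example \ref{count}. What the paper's computation buys in exchange is the explicit value of the sum and an explicit witnessing subsequence inside the rearrangement; but for the bare equality asserted in Proposition \ref{id}, your more elementary argument suffices.
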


  \begin{proof}
If $I$ is finite, the definitions are clearly the same, so let  $|I| = \omega $.
As we mentioned above, the inequality 
$\nsum _{ i \in I } \alpha _i \leq \nsum^\bullet_{ i \in I } \alpha _i$
is trivial; moreover,
$\nsum _{ i \in I }^\bullet \alpha _i 
=
 \sumh_{ \gamma < \omega } \alpha _ \gamma $,
for every rearrangement of the $\alpha_i$'s
into a sequence of length $ \omega$.
We have to show that if $\zeta$ is a countably infinite ordinal
and $( \beta _ \delta ) _{ \delta < \zeta }  $ 
is another rearrangement of the  
$\alpha_i$'s, this time
into a sequence of length $ \zeta $,
then
$ \sumh_{ \gamma < \omega } \alpha _ \gamma  
\leq
  \sumh_{ \delta  < \zeta  } \beta  _ \delta  $. 

Let $ \xi$ be the smallest ordinal
such that $\{ i \in I \mid \alpha _i \geq \omega ^\xi\}$
is finite.
Enumerate those $\alpha_i$'s such that 
 $\alpha _i \geq \omega ^\xi$ as
$ \alpha _{i_0}, \dots, \alpha _{i_h}  $
(the sequence might be empty).
If $\xi=0$, then all but a finite number
of the $\alpha_i$'s are zero and the result follows easily
from the finite case (e.~g., use Proposition \ref{factsww}(4)). 
If  $\xi>0$, then equation (6) in 
\cite[Corollary 5.1]{w} gives
$
 \nsum^\bullet_{ i < \omega } \alpha _i
=
 \sumh_{ \gamma < \omega } \alpha _ \gamma  
=
 \alpha _{i_0}^{\restriction  \xi} \+ \dots \+ \alpha _{i_h}^{\restriction  \xi}
\+ \omega ^\xi$
(recall the definition of $\alpha^{\restriction  \xi}$
given right before Proposition \ref{facts}). 

Turning to the 
rearrangement
$( \beta _ \delta ) _{ \delta < \zeta }  $,
let 
 $ \beta  _{ \delta _0}, \dots, \beta  _{ \delta _h}  $,
with
$\delta_0 < \dots < \delta _h$, 
be an enumeration of those $\beta_ \delta $'s such that  
$ \beta _ \delta  \geq \omega ^\xi$
(of course,  $ \beta  _{ \delta _0}, \dots, \beta  _{ \delta _h}  $
is a rearrangement of 
 $ \alpha   _{ i _0}, \dots, \alpha   _{ i_h}  $).
Now the proof splits into two cases.
First, suppose that $\xi$ is a successor ordinal, say,
$\xi= \xi' +1$. Then, by the very definition of $\xi$,
there are infinitely many $\beta_ \delta $'s
such that  
$ \omega ^{\xi} > \beta _ \delta  \geq \omega ^{\xi'}$.
Hence we can choose a subsequence of 
$( \beta _ \delta ) _{ \delta < \zeta }  $ of order type $ \omega$
and consisting of 
elements $ \geq \omega ^{\xi'}$ and $< \omega ^{\xi}$.
Define another sequence
$( \beta' _ \delta ) _{ \delta < \zeta }  $
obtained from $( \beta _ \delta ) _{ \delta < \zeta }  $
by leaving unchanged  the values of the elements
of the above subsequence,
by leaving unchanged  the values of 
 $ \beta  _{ \delta _0}, \dots, \beta  _{ \delta _h}  $, as well,
and turning to $0$ all the other values.
By Proposition \ref{factsww}(2),
 $  \sumh_{ \delta  < \zeta  } \beta  _ \delta  \geq
  \sumh_{ \delta  < \zeta  } \beta ' _ \delta $. 
  By Proposition \ref{factsww}(4),
 $   \sumh_{ \delta  < \zeta  } \beta ' _ \delta 
=
  \sumh_{ \varepsilon  < \omega + k  } \beta '' _ \varepsilon $,
where $(\beta'' _ \varepsilon ) _{\varepsilon  < \omega + k  } $
is the subsequence of the nonzero $\beta' _ \delta $'s, thus  
 $k$ is finite. 
Then, applying again  
 \cite[Corollary 5.1]{w}, 
we get, for some $j \leq h$
(in fact, $j$ is such that $j + k = h$), 
$ \sumh_{ \delta  < \zeta  } \beta  _ \delta  \geq
\sumh_{ \varepsilon  < \omega + k  } \beta '' _ \varepsilon =
\beta  _{ \delta _0}^{\restriction  \xi} \+ \dots \+ \beta  _{ \delta _j}^{\restriction  \xi}
\+ \omega ^{\xi} \+  \beta  _{ \delta _{j+1} } \+ \dots
\+ \beta  _{ \delta _{h} } \geq
\beta  _{ \delta _0}^{\restriction  \xi} \+ \dots \+ \beta  _{ \delta _j}^{\restriction  \xi}
\+ \omega ^{\xi} \+  \beta  _{ \delta _{j+1} }^{\restriction  \xi} \+ \dots
\+ \beta  _{ \delta _{h} }^{\restriction  \xi}
=
 \alpha _{i_0}^{\restriction  \xi} \+ \dots \+ \alpha _{i_h}^{\restriction  \xi}
\+ \omega ^\xi =
 \sumh_{ \gamma < \omega } \alpha _ \gamma$,
what we had to show. 

The case when $\xi$ 
is limit is similar. This time, choose some subsequence
of $( \beta _ \delta ) _{ \delta < \zeta }  $
of type $ \omega$ 
in such a way that, for every $\xi' < \xi$,
there is some element of the subsequence which is
$ \geq \omega ^{ \xi'} $
and $ < \omega ^{ \xi} $
(notice that if $\xi$ is limit,
then necessarily $\xi$ has cofinality $ \omega$).

All the rest goes  the same way. 
 \end{proof}

Notice that the counterexample in \ref{count} 
shows also that $\nsum^\bullet$ is not invariant under extending a sequence
by adding further $0$'s, while $\nsum$ is indeed invariant in this sense.
More formally, if $( \alpha_i) _{i \in I} $ is a sequence of ordinals,
$J \supseteq I$ and we set $ \alpha _i =0$, for $i \in J \setminus I$,
then   $\nsum _{ i \in I } \alpha _i = \nsum _{ i \in J} \alpha _i$, 
as a consequence of 
 Proposition \ref{factsww}(4).  The analogous identity fails
for $\nsum^\bullet$. Just consider the sequence $( \alpha_i) _{i \in I} $ 
from \ref{count} 
 and let $|J| \geq \omega_2$. Then the arguments in 
\ref{count}  show that
$\nsum^\bullet _{ i \in I } \alpha _i = 
\omega_1( \omega +1 ) \not= \omega _1 \omega =
\nsum^\bullet _{ i \in J} \alpha _i$.

The above remark suggests that $\nsum$ is perhaps a more natural operation
than $\nsum^\bullet$.

\subsection*{Invariance in special cases} 
As promised, we now show 
that $\sumh$ is invariant under a special class of permutations.
First, a  definition is in order.
If $\zeta$ is an ordinal, let us call a subset $A$  of
$\zeta$ a \emph{component} of $\zeta$
if $A$ has either the form
$[ \alpha, \alpha + \omega )$ or
$[ \alpha, \zeta  )$,
where in both cases either $\alpha=0$ or $\alpha$ is a limit ordinal. 
 Thus the components partition
$\zeta$. Of course, there is just one component 
of  the kind  $[ \alpha, \zeta  )$, all the others have length $ \omega$.
With the above definition, we can show that 
$\sumh$ is invariant under the (somewhat special kind of)  permutations
which act on each component.
Moreover, a form of the general associative-commutative law
holds in some special cases, to the effect that, besides performing
the above kinds of permutations, we can associate sets of \emph{finitely many} 
 elements \emph{inside the same component}.

\begin{proposition} \labbel{factswww}
  \begin{enumerate}[(1)]    
\item  
If  $\pi$ is a permutation of $ \zeta $
such that $\pi(C) = C$,
for every component $C$  of $\zeta$, 
 then
$\sumh _{ \gamma  < \zeta  } \alpha _ \gamma  
= 
\sumh _{ \gamma  < \zeta  } \alpha _{ \pi ( \gamma )} $.
\item
More generally, suppose that 
$(F_h) _{ h < \zeta '} $ is a partition of $\zeta$ such that 
  \begin{enumerate}[(a)]
    \item 
each $F_h$ is finite, say, $F_h = \{ \delta _1, \dots, \delta  _{r(h)} \} $,
\item
each $F_h$ is a subset of some component $C_h$ of $\zeta$,
\item
the $F_h$'s are ordered in such a way that 
if $C_h$ occurs before $C _{h'} $ in $\zeta$, then $h < h '$;
that is, the ordering of the $F_h$'s respects the ordering of the components,
but, inside a component, the ordering of the $F_h$'s can be arbitrary. 
  \end{enumerate}
   Then
\begin{equation*}\labbel{ca}     
\sumh _{ \gamma  < \zeta  } \alpha _ \gamma  
= 
\sumh _{ h  < \zeta'  }   
\,\pl _{ \delta \in F_h}  \alpha _ \delta =
\sumh _{h < \zeta '  } ( \alpha _{ \delta _1} \+ \alpha _{ \delta _2} \+ \dots 
\+ \alpha _{ \delta _{r(h)}}  )
 \end{equation*}
  \end{enumerate}
 \end{proposition}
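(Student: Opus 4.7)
The plan is to prove~(1) first, and then deduce~(2) via a transfinite induction on~$\zeta$.

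The observation at the core of~(1) is the following: for any ordinal~$\gamma$ and any sequence $(\beta_n)_{n<\omega}$, the partial sums $\gamma\+\sumh_{i<n}\beta_i$ are monotone non-decreasing in~$n$ by Proposition~\ref{facts}(1), and the initial segments $[0,n)$ are cofinal in the family of finite subsets of $\omega$ ordered by inclusion. Hence
\[
\sup_{n<\omega}\Big(\gamma\+\sumh_{i<n}\beta_i\Big)
=\sup\Big\{\gamma\+\pl_{i\in S}\beta_i\,:\,S\subseteq\omega\text{ finite}\Big\}.
\]
This supremum is therefore invariant under any permutation of $(\beta_n)_{n<\omega}$ and, more generally, under any partition of~$\omega$ into finite blocks indexed along an $\omega$-segment (whose partial unions are cofinal among the finite subsets of~$\omega$). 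This is the infinitary face of the invariance of the countable natural sum under permutations, recalled from~\cite{w}. To prove~(1), transfinitely induct on~$\alpha\leq\zeta$ to show $\sh_\alpha$ is unchanged by any component-preserving permutation: for $\alpha=\beta+\omega$ with $\beta$ an earlier component boundary, apply the recursion $\sh_\alpha=\sup_n(\sh_\beta\+\sumh_{i<n}\alpha_{\beta+i})$ together with the core observation and the inductive hypothesis on~$\sh_\beta$; for other limits take sups and invoke the hypothesis; and when $\zeta$ is a successor, commutativity of~$\+$ handles the finite last component.

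For~(2), induct on~$\zeta$ under the natural reading of~(c) that within each length-$\omega$ component the $F_h$'s are indexed along an $\omega$-segment of~$\zeta'$. The case $\zeta=0$ is vacuous. For $\zeta=\delta+1$ successor, the last (finite) component of~$\zeta$ contains only finitely many blocks $F_{h_1},\ldots,F_{h_m}$ (with $h_m=\zeta'-1$), and by commutativity and associativity of~$\+$ the sum $T_{h_1}\+\cdots\+T_{h_m}$ can be rewritten with the block $F_{h_0}\ni\delta$ last and $T_{h_0}=T'_{h_0}\+\alpha_\delta$, where $T'_{h_0}:=\pl_{\gamma\in F_{h_0}\setminus\{\delta\}}\alpha_\gamma$. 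The inductive hypothesis applied to the induced partition of~$\delta$ (replacing $F_{h_0}$ by $F_{h_0}\setminus\{\delta\}$, dropped if empty) gives $\sh_\delta$ as the corresponding iterated natural sum, and the recursion $\sh_\zeta=\sh_\delta\+\alpha_\delta$ together with $T'_{h_0}\+\alpha_\delta=T_{h_0}$ yields $\sumh_{h<\zeta'}T_h=\sh_\zeta$. For $\zeta$ limit with a last $\omega$-component $[\beta_*,\zeta)$, invoke~(1) to permute the $\alpha_\gamma$'s within $[\beta_*,\zeta)$ so that the $F_h$'s inside this component become contiguous intervals in the natural order of the component (leaving both $\sh_\zeta$ and the sequence $(T_h)$ unchanged); the core observation then identifies $\sh_\zeta=\sup_n(\sh_{\beta_*}\+\sumh_{i<n}\alpha_{\beta_*+i})$ with the cofinal supremum of partial $T$-sums, and the inductive hypothesis on~$\beta_*$ closes the case. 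Finally, if $\zeta=\omega\cdot\eta$ with $\eta$ limit has no last component, both sides are suprema along cofinal sequences of component boundaries, and the inductive hypothesis at each boundary suffices.

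The main obstacle I expect is the limit case of~(2): one must use~(1) to rearrange the $\alpha_\gamma$'s within a length-$\omega$ component so that the partial unions $\bigcup_{h<h'}F_h$ become initial segments of~$\zeta$, amenable to the inductive hypothesis, while verifying that the target sequence $(T_h)$ is undisturbed by the rearrangement. Theorem~\ref{infsum} and Lemma~\ref{lem1}, which describe how $\sh_\zeta$ is assembled at component boundaries, provide the bookkeeping tools needed.
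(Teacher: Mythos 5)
Your argument is correct in outline, but it is not the proof the paper writes out: it is essentially the ``elementary proof'' that the paper dismisses in one sentence (``similar to [L1, Proposition 2.4(5)(6)]'') before giving a different, order-theoretic argument. The paper derives both inequalities of (2) from Corollary \ref{mixx}: starting from a realization of one side as a mixed sum satisfying the finiteness condition \ref{mixx}(3), it merges (resp.\ expands) the pieces $A_{\delta_1},\dots,A_{\delta_{r(h)}}$ inside each $F_h$ via Carruth's theorem and checks that hypotheses (a)--(c) preserve condition (3); part (1) is then the special case of singleton blocks. That route buys brevity, since the transfinite recursion has already been absorbed once and for all into Theorem \ref{mix} and Corollary \ref{mixx}, at the cost of leaning on the whole mixed-sum machinery. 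Your route works directly from Definition \ref{iterated}; the key observation that $\sup_n\bigl(\gamma\+\sumh_{i<n}\beta_i\bigr)$ equals the supremum of $\gamma\+\pl_{i\in S}\beta_i$ over finite $S\subseteq\omega$, hence is invariant under permuting or regrouping within an $\omega$-component, is exactly the right engine, and your case analysis at component boundaries goes through. Two remarks. First, in (1) the induction must run only over ordinals $\alpha$ that are unions of components of $\zeta$ (together with $\zeta$ itself): for $\alpha$ strictly inside a component, $\sh_\alpha$ is \emph{not} permutation-invariant; the cases you actually treat are precisely these boundaries, so this is a matter of phrasing rather than a gap. Second, you were right to make explicit the reading of (c) under which the $F_h$'s inside a single $\omega$-component are indexed along a segment of $\zeta'$ of order type at most $\omega$: without it the statement as written fails (take $\zeta=\omega$, all $\alpha_\gamma=1$, and the singletons $\{2h\}$ for $h<\omega$ followed by the singletons $\{2h+1\}$, so that $\zeta'=\omega+\omega$; the left side is $\omega$ while the right side is $\omega+\omega$), and the paper's own proof tacitly uses the same reading when it asserts that condition \ref{mixx}(3) survives the expansion of the blocks.
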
 

\begin{proof}
The proposition has an elementary
proof similar to \cite[Proposition 2.4(5)(6)]{w}.

The proposition can be  given also an order-theoretical proof, 
using Corollary  \ref{mixx}.
As far as (1) here is concerned, just notice that
 finiteness of the sets in \ref{mixx}(3) 
 is preserved under the 
permutations at hand.

As for (2), let $\beta_h = \pl _{ \delta \in F_h}  \alpha _ \delta$,
for 
$h < \zeta '  $.
Then, applying Corollary \ref{mixx}
to   
$\sumh _{h < \zeta '  } \beta_h$,
we get a mixed sum of the $\beta_h$'s
which satisfies condition   \ref{mixx}(3).
Expanding the $\beta_h$'s using Carruth's theorem
(this is possible by (a)),
we get a mixed sum of the $\alpha_ \gamma $'s,
and this sum satisfies \ref{mixx}(3), by the assumptions
(b) and (c). Thus, by Corollary \ref{mixx},
$\sumh _{ \gamma  < \zeta  } \alpha _ \gamma  
\geq
\sumh _{ h < \zeta'  } \beta _h  $.

Conversely, apply Corollary \ref{mixx}
to  $\sumh _{ \gamma  < \zeta  } \alpha _ \gamma  $.
In the mixed sum given by \ref{mixx},
for each $\delta \in F_h$, 
join together 
$ A _{ \delta _1},  A _{ \delta _2}, \dots 
A _{ \delta _{r(h)}}$, and call
$B_ \delta $ this union.
Again by Carruth theorem and (a), 
the order type $\beta'_ \delta $ of  
$B_ \delta $ is $ \leq \beta_h = \pl _{ \delta \in F_h}  \alpha _ \delta$.
Since the $B_ \delta $'s realize a mixed sum
of the $\beta'_ \delta $'s, and this realization satisfies 
\ref{mixx}(3), by 
(b) and (c),
we get
$
\sumh _{ \gamma  < \zeta  } \alpha _ \gamma
\leq
\sumh _{ h < \zeta'  } \beta' _h
\leq
\sumh _{ h < \zeta'  } \beta _h $,
by Corollary \ref{mixx} and  
Proposition \ref{factsww}(2).
\end{proof}

\begin{problems} \labbel{prob} 
(a)
The iterated natural sum can be extended to the surreal numbers,
in a way we are going to explain soon.
See Conway \cite{C} for details about surreal numbers and, e.~g., Siegel \cite{Sieg}
for an updated list of references.
A surreal number $s$ can be thought of as an ordinal-indexed string consisting of
$+$ and  $-$'s; this is called the \emph{sign expansion} of $s$.  
The ordinals can be considered as a
 substructure of the surreals; in this sense, an ordinal is a surreal
 having only $+$'s in its sign expansion.
The surreal sum, when restricted to the ordinals,
does correspond to the ordinal natural sum.   
One can also define the limit of a transfinite sequence of surreals;
see Mez\H{o} \cite{M} and \cite{arx}. Roughly, the limit of
an ordinal-indexed sequence of surreals is the longest string $s$ 
such that every initial segment of $s$ 
is eventually coincident with the corresponding (possibly improper)
 initial segments
of the members of the sequence
(we are allowing the length of $s$ 
to be a successor ordinal, in which case
$s$ is required to be eventually an initial segment
of the members of the sequence).
 Notice that the limit $s$ might be much shorter
than the superior limit of the lengths of the members of the sequence,
actually, 
$s$ can be the empty sequence!
Then Definition \ref{iterated} extends to the surreals.
See \cite{arx}  for full details. 

Which results from the present paper and  from \cite{w}
generalize to this surreal iterated sum? 

(b) Conversely, an ordinal sum can be defined within the surreals. 
In the sense of string expansions, it corresponds to
string concatenation; see Conway \cite[Chapter 15, p. 193]{C}.
It can be obviously iterated through the transfinite.
Which results about transfinite ordinal sums (of ordinals) do generalize
to the surreals?

Most of the  problems which follow can be extended to the surreals, too.

(c) Though, in general, the iterated natural sum $\sumh$ from \ref{iterated} 
is not invariant under permutations, one might ask for which
sequences $(\alpha_ \gamma ) _{ \gamma  < \zeta } $  of ordinals 
 the sum $\sumh_{ \gamma  < \zeta  }  \alpha _ \gamma $ 
turns out to be indeed  invariant under permutations.
The corresponding problem for
the usual transfinite ordinal sum has been studied,
see Hickman \cite{H} and further references there.
Of course, for the iterated natural sum this kind of
``generalized commutativity''
  is a much more frequent phenomenon, since it holds 
 for all finite and $ \omega$-indexed sequences.

(d) In particular, under which conditions (on an ordinal-indexed 
sequence of ordinals)
do some of the operations 
$\sumh$,
$\nsum ^o$, $\nsum$,  
$\nsum^\bullet$ and
 $\sum$ give the same outcome?
One can also take into account the operations
$\sum ^o$, $\sum^*$  and
$\sum ^\bullet$, which are defined as in Definitions \ref{nats},
by replacing everywhere $\sumh$ by $\sum$
($\sum^*$  corresponds to $\nsum$). 
The operation $\sum ^*$ has been studied quite thoroughly,
see, e.~g., Rado \cite{Ra}, Anderson \cite{An}.
Notice that sometimes in the literature the word
\emph{permutation} is used to mean what 
we call here a \emph{rearrangement}. 
As we mentioned, Rado \cite[p. 219]{Ra} shows 
that $\sum^\bullet$  and $\sum^*$ might give different outcomes.
On the other hand, the operation 
$\sum ^o$, when the index set is not a cardinal, 
  seems to have received less  attention.

(e) Of course, there is a more general formulation of Problem
 (c) above, 
asking how many values $\sumh$ assumes
when we permute (or, more generally, rearrange) the elements of some given sequence. 
In the case of $\sum$ the corresponding problem
has been studied; see, e.~g., Sierpi\'nski \cite{Sie}, 
Ginsburg \cite{G}, Hickman \cite{H1}, 
Komj{\'a}th \cite{K} 
and further references in these papers.

(f) The fact that we do not always have 
  ``maximal sums'' for $\sumh$, i. e., that some suprema are not necessarily 
attained,
 leaves out the possibility 
of the existence of maximal sums for special kinds of sequences.
For $\sum$ this has been studied; see 
 Dushnik  \cite{D} and  Anderson
\cite{A2}.

(g) Study transfinite natural products defined in the same vein as of Definitions
\ref{iterated}, \ref{stages} and \ref{nats}. Ideas from Altman \cite{A} 
might be relevant to the problem. 
Here order-theoretical characterizations will be probably much harder to come by.
All the  problems above can be asked for infinite natural products, too.

For some properties of ordinary (not ``natural'') transfinite products
see \cite[XIV, 17]{Sier} and \cite[III, \& 10]{Bac}.

(h) By applying the characterization
of $ \sumh _{ \gamma < \zeta} \alpha _ \gamma  $ given in 
Corollary  \ref{mixx}, one can surely rephrase 
Definitions \ref{nats} in order to provide 
order-theoretical characterizations of $\nsum ^o$, $\nsum$  
and $\nsum^\bullet$. Such characterizations appear muddled, complicated
and far from being useful.
Are there simpler and more useful
order-theoretical characterizations of
$\nsum ^o$, $\nsum$,  
$\nsum^\bullet$?
\end{problems}

\section{Notions of size for well-founded trees} \labbel{not} 
Wang \cite{W} and  V\"a\"an\"anen and Wang \cite{VW} 
defined  notions of size for an 
$\mathcal L _{ \omega _{1}, \omega }$-formula in negation normal form.
Recall that $\mathcal L _{ \omega _{1}, \omega }$ is the extension
of first-order logic in which countable disjunctions and conjunctions
are allowed.
Since a (possibly infinitary) formula can be viewed  as a 
labeled well-founded tree and V\"a\"an\"anen and Wang's definition
 depends
only on the tree structure,
not on the labels,
they implicitly give  definitions of size for 
countable
 well-founded trees 
(to be pedantic,  for those trees arising from 
$\mathcal L _{ \omega _{1}, \omega }$-formulas
in negation normal form; notice also that,
in the definition of size from \cite{VW}, negating an atomic formula
does not augment size). 
By extending their ideas and using Definitions \ref{nats},
we can provide notions of size which apply to every well-founded tree,
not only to countable ones. 

Here we intend a  tree
in the classical set-theoretical sense
but
we shall describe it in terms of the reversed order.
A \emph{(reversed) well-founded tree} is a 
well-founded partially ordered set $(T, \leq)$ such that, 
for every $t \in  T$, the set of all successors of $t$ 
is finite and linearly ordered.
By, e.~g., \cite[Theorem 2.27]{Je},
every element $t$ of a well-founded partially ordered set
has a well-defined \emph{rank} $\rho(t)$;
the rank of   $t$ is the smallest ordinal which is
strictly larger than all the ranks of the predecessors
of $t$. This justifies  inductive definitions
on ranks.
Ranks go the other direction with respect to \emph{levels};
maximal elements are at level $0$ but 
if the tree has just one maximal element (the \emph{root})
this is the element of largest rank. 
When the order $\leq$ is understood, we shall simply write
$T$ in place of $(T, \leq)$.

\begin{definition} \labbel{size}
If $T$ is a well-founded (reversed) tree, then, for every $t \in T$, 
we define the \emph{size} $\sigma(t)$ of $t$ 
by induction on the rank of $t$ as 
$\sigma(t) = \left(\nsum _{ u \in P(t)}\sigma(u)\right) + 1$,
where $P(t)$ is the set of all the immediate predecessors of $t$.
In particular, minimal elements of $T$ have size $1$.

 The \emph{size} $\sigma(T)$ of $T$ is 
$\sigma(T) = \nsum _{ t \in M}\sigma(t)$,
where $M$ is the set of the maximal elements of $T$.
In particular, if $T$ has a unique root, 
then $\sigma(T)$ is 
the size of the root of $T$.

Notice that when $T$ is finite the above defined size gives the 
cardinality of $T$ (the number of its nodes). 

Similar definitions can be given using
$\nsum ^o$ or   
 $\nsum^\bullet$
in place of $\nsum$.
 \end{definition}   

In particular, since a formula 
of a (possibly infinitary) logic can be seen as a (labeled) well-founded tree,
the above definition furnishes a possible definition of
size for  a formula.

The size of a countable well-founded tree can be
given an order-theoretical characterization,
as we are going to show, after some preliminary definitions.

If $(T, \leq)$ is a partially ordered set and $t \in T$,
we let 
${\downarrow} t = \{ u \in T \mid u \leq t \}$.
We shall frequently consider another  order $\leq'$  on $T$; 
usually $\leq'$  will be an \emph{extension} of $\leq$,
that is, 
$ u \leq t$ 
implies 
$ u \leq' t$ 
for every $u, t \in T$.
 In the above situation, we shall denote
the set $  \{ u \in T \mid u \leq' t\}$
by 
${\downarrow}'t $.

\begin{definition} \labbel{otc}
If $(T, \leq)$ is a partially ordered set,
we say that  $\leq'$ is \emph{a downward-finite
extension of} $\leq$ if $\leq'$ is an extension of $\leq$
and, for every $v\in T$, there are a finite number
$u_0, \dots u_n$  of  elements of $T$
which are $\leq$-incomparable with $v$ and
 such that
${\downarrow}'v \subseteq {\downarrow}v \cup 
{\downarrow}u_0 \cup \dots \cup {\downarrow}u_n$.

If $\leq$ is understood, we shall simply say that
$\leq'$ is downward finite.
 \end{definition}    

It is well-known that every well-founded partial order
can be extended to a well-order. However, even  in the case of 
a well-founded tree, the order-types of extensions might be unbounded;
just consider an infinite antichain $C$. It has well-ordered extensions 
of every order-type having cardinality $|C|$,
and the supremum of these order-types is $|C|^+$, which  is not attained.
  
On the other hand, 
we are going to show that a maximum extension exists
if we restrict ourselves to downward-finite
well-ordered extensions of countable well-founded trees; moreover,
the order-type of this extension is exactly the size of the tree,
as introduced in Definition \ref{size}.
In our opinion, this result shows the naturalness 
(at least in the countable case) both of the
definition of $\nsum$ and of the above notion of 
size for a well-founded tree.

\begin{theorem} \labbel{wft}
If $(T, \leq)$ is a countable well-founded (reversed) tree, then
$\leq$ has a  downward-finite extension which is a well-order
of type $\sigma(T, \leq)$.

Moreover, every downward-finite well-order extending
$\leq$ has order-type less than or equal to $\sigma(T, \leq)$.
 \end{theorem}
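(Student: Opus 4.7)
The approach is an induction on the rank of $t \in T$, carrying both halves of the statement in parallel: for each $t$, the existence of a downward-finite well-order on ${\downarrow}t$ of order-type exactly $\sigma(t)$, and the bound that any downward-finite well-order extending $\leq$, once restricted to ${\downarrow}t$, has type at most $\sigma(t)$. Since $T$ is countable, each set $P(t)$ of immediate predecessors is countable, so by \eqref{idid} and Proposition \ref{id} the ordinal $\nsum_{u \in P(t)} \sigma(u)$ equals the $\omega$-indexed natural sum of the $\sigma(u)$'s under any enumeration, and by \cite[Theorem 4.7]{w} it is the maximum of all left-finite mixed sums of the $\sigma(u)$'s. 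The final assembly into a statement about all of $T$ is the same step run on the set $M$ of maximal elements of $T$, whose downward closures partition $T$ (because the successors of any element form a linearly ordered finite set, so two distinct maximal elements cannot share a predecessor).

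For the existence direction, at the inductive step I would pick a realization $(A_u)_{u \in P(t)}$ attaining the maximum $\nsum_{u \in P(t)} \sigma(u)$ as a left-finite mixed sum, which is guaranteed by \cite[Theorem 4.7]{w}. Using the inductive orders $\leq'_u$ on each ${\downarrow}u$, each of type $\sigma(u) = \mathrm{type}(A_u)$, I would transport along the order isomorphisms ${\downarrow}u \cong A_u$ to obtain a single well-order on $\bigsqcup_{u \in P(t)} {\downarrow}u$, and then place $t$ on top, producing a well-order on ${\downarrow}t$ of type $\sigma(t)$. That $\leq$ is extended is immediate, since any $\leq$-comparable pair in ${\downarrow}t$ either lies in a single ${\downarrow}u$ (where $\leq'_u$ already extends $\leq$) or involves $t$ on top. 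Downward-finiteness would follow by combining the left-finiteness of the mixed sum with the inductive downward-finiteness of each $\leq'_u$: for $v \in {\downarrow}u$, the initial segment below $v$ in the new order lies in the $\leq'_u$-initial segment of ${\downarrow}u$ up to $v$ together with finitely many whole pieces $A_{u_1}, \dots, A_{u_n}$ selected by left-finiteness, and the selected $u_i$'s are automatically $\leq$-incomparable with $v$ because they live in distinct subtrees of $t$.

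For the maximality direction, let $\leq'$ be an arbitrary downward-finite well-order extending $\leq$. The crucial preliminary is to show that, for each $u \in P(t)$, the restriction $\leq'|_{{\downarrow}u}$ is itself a downward-finite well-order extending $\leq|_{{\downarrow}u}$: if $v \in {\downarrow}u$ has downward-finite witnesses $w_0, \dots, w_k$ in $T$, then those $w_i$ with ${\downarrow}w_i \cap {\downarrow}u \neq \emptyset$ must satisfy $w_i \leq u$, since the alternative $u \leq w_i$ would yield $v \leq u \leq w_i$, contradicting the $\leq$-incomparability of $v$ and $w_i$; hence the intersections ${\downarrow}w_i \cap {\downarrow}u$ are downward closures of elements of ${\downarrow}u$ still $\leq$-incomparable with $v$. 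The inductive hypothesis then gives $\mathrm{type}(\leq'|_{{\downarrow}u}) \leq \sigma(u)$. Since $t$ is the $\leq'$-maximum of ${\downarrow}t$, and $\leq'$ restricted to ${\downarrow}t \setminus \{t\}$ exhibits its order-type as a left-finite mixed sum of the pieces ${\downarrow}u$ (the left-finiteness being a direct translation of downward-finiteness of $\leq'$, via the same tree argument), \cite[Theorem 4.7]{w} together with monotonicity of $\nsum$ would give $\mathrm{type}(\leq'|_{{\downarrow}t \setminus \{t\}}) \leq \nsum_{u \in P(t)} \sigma(u)$, whence $\mathrm{type}(\leq'|_{{\downarrow}t}) \leq \sigma(t)$. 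The main obstacle I anticipate is precisely this restriction-to-a-subtree step: it is where the hypothesis that successors in $T$ are linearly ordered is indispensable, since in a general poset the witnesses to downward-finiteness in the ambient order need not be replaceable by witnesses inside the subtree that remain $\leq$-incomparable with $v$. Once this technical point is in place, both directions reduce to straightforward applications of the countable order-theoretical characterization and of the definition of $\sigma$.
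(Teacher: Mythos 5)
Your proposal is correct and follows essentially the same route as the paper: a rank induction driven by the characterization of the countable natural sum as the largest left-finite mixed sum, transporting the inductive orders on the subtrees ${\downarrow}u$ along a maximal left-finite realization and placing $t$ on top for existence, and restricting a given downward-finite well-order to the subtrees (using the tree property to keep the witnesses incomparable and inside the subtree) for the upper bound. The only difference is organizational -- the paper factors the argument into separate lemmas for assembling/decomposing along the maximal elements (Lemmas \ref{union} and \ref{union2}) and for the rank induction (Lemmas \ref{elt} and \ref{elt2}), while you run one parallel induction -- and you correctly isolate the same technical points the paper must address.
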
 

The proof of Theorem \ref{wft}
proceeds through several lemmas.
We first recall a result 
from \cite[Theorem 4.7]{w},
which has been mentioned in the previous section
and which we shall repeatedly use here.
Recall the definition of a mixed sum from 
\ref{infmix} and the definition of left-finiteness
from \ref{lf}.  Recall from equation \eqref{idid} that,
for $ \omega$-indexed sequences, $\nsum$
has many equivalent reformulations. 

\begin{theorem} \labbel{wt}
If $( \alpha_i) _{i < \omega} $ is a sequence of ordinals,
then $\nsum _{i < \omega  } \alpha _i$ is 
the largest left-finite mixed sum of   $( \alpha_i) _{i < \omega} $.
  \end{theorem}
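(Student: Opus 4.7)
The plan is to deduce Theorem \ref{wt} as an immediate specialisation of Corollary \ref{mixx} to the case $\zeta=\omega$, after verifying that for $\omega$-indexed sequences the notion of \emph{left-finiteness} from Definition \ref{lf} coincides with condition (3) of Corollary \ref{mixx}. By the chain of identities \eqref{idid}, $\nsum_{i<\omega}\alpha_i=\sumh_{\gamma<\omega}\alpha_\gamma$, so it is enough to characterise $\sumh_{\gamma<\omega}\alpha_\gamma$ as the largest left-finite mixed sum of $(\alpha_i)_{i<\omega}$.

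The heart of the argument is the following equivalence. For a mixed sum $\beta$ of $(\alpha_i)_{i<\omega}$ realised by $(A_i)_{i<\omega}$, the realisation is left-finite if and only if it satisfies condition (3) of Corollary \ref{mixx}. One direction is trivial: if every element of $\beta$ has predecessors concentrated in finitely many $A_i$'s, then in particular, for $a\in A_\varepsilon$, only finitely many $A_\gamma$ with $\gamma>\varepsilon$ can contain elements below $a$. For the converse I use the hypothesis $\zeta=\omega$ in an essential way: given any $a\in\beta$, say $a\in A_\varepsilon$, the set of indices $\{i<\omega\mid i\leq\varepsilon\}$ is itself finite (it is just $\varepsilon+1$), and condition (3) supplies the finiteness of the tail $\{i>\varepsilon\mid A_i\cap[0,a)\neq\emptyset\}$; taking the union gives a finite set of $A_i$'s covering all predecessors of $a$, which is precisely left-finiteness.

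With this equivalence established, the theorem follows directly from Corollary \ref{mixx}. That corollary yields a realisation of $\sumh_{\gamma<\omega}\alpha_\gamma$ as a mixed sum satisfying (3), hence by the equivalence a left-finite mixed sum, and it asserts that every mixed sum realisable in such a way is bounded above by $\sumh_{\gamma<\omega}\alpha_\gamma$. Thus $\sumh_{\gamma<\omega}\alpha_\gamma$ is not merely the supremum but is actually attained as the \emph{maximum} of the left-finite mixed sums of $(\alpha_i)_{i<\omega}$.

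The main obstacle here is a conceptual rather than technical one: pinning down the equivalence of the two finiteness conditions, and registering that it relies on $\zeta=\omega$. For $\zeta>\omega$ the initial segment $\{\gamma<\zeta\mid\gamma\leq\varepsilon\}$ can itself be infinite, so condition (3) need not imply left-finiteness, and Corollary \ref{mixx} then characterises $\sumh$ over a strictly larger class. This is precisely consistent with the remarks following Example \ref{count}, where we noted that left-countable mixed sums of uncountable families generally admit no maximum; Theorem \ref{wt} is therefore a genuinely countable phenomenon that does not extend verbatim to larger $\zeta$.
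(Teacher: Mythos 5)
Your proposal is correct and follows essentially the route the paper itself indicates: the paper recalls this statement from \cite[Theorem 4.7]{w} but explicitly observes (both before Definition \ref{lf} and right after the statement) that it is a special case of Corollary \ref{mixx}, precisely because for $\omega$-indexed sequences left-finiteness is equivalent to condition (3). Your write-up merely spells out that equivalence, including the correct observation that the implication from (3) to left-finiteness uses the finiteness of the initial segment $\{i\mid i\leq\varepsilon\}$ and hence $\zeta=\omega$.
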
  

As we mentioned in the previous section, 
Theorem \ref{wt} can be obtained also a 
consequence of Corollary \ref{mixx}. 

\begin{lemma} \labbel{union}
Suppose that $(T, \leq)$ is a countable well-founded tree
and $M$ is the set of the maximal elements of $T$.
Furthermore, suppose that, for every $u \in M$,  
$\leq'_u$ is 
a well-ordered downward-finite
extension of 
$\leq _{{\restriction}{\downarrow}u }$ in ${\downarrow}u$,
and let $\alpha_u$ be the order-type of  $\leq'_u$.
Then $\leq$ has a well-ordered downward-finite
extension $\leq''$ of  order-type
$\nsum _{u \in M} \alpha _u $. 
Moreover, $\leq''$ is such that  
$\leq'' _{{\restriction}{\downarrow}u }$
is equal to
$\leq'_u$,
for every $u \in M$.
 \end{lemma}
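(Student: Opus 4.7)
The plan is to realize $\beta := \nsum_{u \in M}\alpha_u$ as a left-finite mixed sum of $(\alpha_u)_{u\in M}$ using Theorem \ref{wt}, and then transport this structure back to $T$ via the isomorphisms $({\downarrow}u,\leq'_u) \cong \alpha_u$.

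First, I would observe the key structural fact that the family $({\downarrow}u)_{u \in M}$ actually \emph{partitions} $T$. Every $t \in T$ has a finite, linearly ordered set of successors, whose maximum lies in $M$, giving the covering. For disjointness: if $t \leq u$ and $t \leq v$ with $u,v \in M$, then $u,v$ lie in the linearly ordered successor set of $t$, so they are comparable; being both maximal, $u = v$. In particular, distinct elements of $M$ are pairwise $\leq$-incomparable. Since $T$ is countable, $M$ is at most countable.

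Next, I would apply Theorem \ref{wt} to $(\alpha_u)_{u\in M}$ to obtain $\beta$ as a mixed sum realized by pairwise disjoint sets $(A_u)_{u\in M}$, where each $A_u \subseteq \beta$ has order type $\alpha_u$, and the realization is left-finite. Let $\varphi_u \colon ({\downarrow}u,\leq'_u) \to (A_u, \leq_\beta)$ be the unique order isomorphism, and for each $t \in T$ let $u(t) \in M$ be the unique maximal element with $t \leq u(t)$. Define $s \leq'' t$ in $T$ iff $\varphi_{u(s)}(s) \leq_\beta \varphi_{u(t)}(t)$. By construction $\leq''$ is a well-order on $T$ of type $\beta$, and its restriction to each ${\downarrow}u$ is precisely $\leq'_u$.

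To verify that $\leq''$ extends $\leq$: if $s \leq t$, both lie in ${\downarrow}u(t)$, so $s \leq'_{u(t)} t$ because $\leq'_{u(t)}$ extends $\leq$ on ${\downarrow}u(t)$; hence $s \leq'' t$. The main verification — and the step I expect to be the most delicate — is downward-finiteness. Given $v \in T$ and setting $u = u(v)$, I would split ${\downarrow}''v$ into its intersection with ${\downarrow}u$ and its complement. The first part corresponds under $\varphi_u$ to an initial segment of $A_u$ and equals $\{w \in {\downarrow}u : w \leq'_u v\}$; by downward-finiteness of $\leq'_u$ it lies in ${\downarrow}v \cup {\downarrow}w_0 \cup \cdots \cup {\downarrow}w_m$ for finitely many $w_i \in {\downarrow}u$ that are $\leq$-incomparable with $v$ (and incomparability within ${\downarrow}u$ is the same as incomparability in $T$). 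The second part is contained in $\bigcup A_{u'}$ over those $u' \neq u$ with $A_{u'}$ meeting the initial segment of $\beta$ below $\varphi_u(v)$; by left-finiteness only finitely many such $u'$ occur, and each satisfies ${\downarrow}u' \subseteq T$ with $u'$ $\leq$-incomparable to $v$ (because $u' \neq u$ in $M$ forces $u', u$ incomparable, so $u'$ is not $\geq v$ as then $u' = u$, and $u' \leq v \leq u$ would contradict maximality of $u'$). Taking the union of the two finite collections of witnesses yields the required finite cover, completing the proof of downward-finiteness.
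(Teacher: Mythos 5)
Your proposal is correct and follows essentially the same route as the paper's proof: realize $\nsum_{u\in M}\alpha_u$ as a left-finite mixed sum via Theorem \ref{wt}, transport it to $T$ through the isomorphisms $({\downarrow}u,\leq'_u)\cong\alpha_u$, and verify downward-finiteness by splitting ${\downarrow}''v$ into its part inside ${\downarrow}u(v)$ (handled by downward-finiteness of $\leq'_{u(v)}$) and its part outside (handled by left-finiteness of the realization). Your explicit verification that the ${\downarrow}u$'s partition $T$ and that the relevant $u'$'s are $\leq$-incomparable with $v$ fills in details the paper states more briefly, but the argument is the same.
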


 \begin{proof} 
By Theorem \ref{wt}, 
$\nsum _{u \in M} \alpha _u $
is a left-finite mixed sum of 
$( \alpha _u) _{u \in M} $. 
Through the bijections 
from ${\downarrow}u $ to $ \alpha _u$
given by each of the orders $\leq'_u$, 
we can use a realization of  
$\nsum _{u \in M} \alpha _u $
as a left-finite mixed sum of 
$( \alpha _u) _{u \in M} $  
 to construct
a well-order $\leq''$ on $T$  of type 
$\nsum _{u \in M} \alpha _u $
(notice that $T= \bigcup _{u \in M} {\downarrow}u  $,
since the set of successors of each element of $T$
is finite, hence each element of $T$ is $\leq u$,
for some $u \in M$). 
The order $\leq''$ 
is such that 
$\leq'' _{{\restriction}{\downarrow}u }$
is equal to
$\leq'_u$,
for every $u \in M$.
Moreover, $\leq''$  has the following property. 

  \begin{enumerate} \item [(*)]
For every $v \in T$, the set
$\{ u \in M \mid w \leq'' v, \text{ for some }w \leq u  \}$   
is finite.
  \end{enumerate} 

(this is the ``translation'' of left-finiteness
to the new situation, since $w \in {\downarrow}u $ 
if and only if $w \leq u $).

Moreover, 
$\leq''$ extends $\leq$,
since, by assumption,
for every $u \in M$, the order  $\leq'_u$ extends 
$\leq _{{\restriction}{\downarrow}u }$
and since, for $u \not= u ^* \in M$,
all the elements from
${\downarrow}u $ are $\leq$-incomparable
with all the elements from   
${\downarrow}u ^*$.  

Hence it remains to show that
$\leq''$ is a downward-finite
extension of $\leq$.

So let $v \in T$, hence $v \in {\downarrow}u $,
for some $u \in M$.
Since
$\leq'_u$ is 
a downward-finite
extension of 
$\leq _{{\restriction}{\downarrow}u }$,
there are elements
$v_0, \dots v_n \in {\downarrow}u $ 
which are $\leq $-incomparable with $v$ and
 such that
${\downarrow}_u'v \subseteq {\downarrow}v \cup 
{\downarrow}v_0 \cup \dots \cup {\downarrow}v_n$,
where, obviously,
${\downarrow}_u'v $ is computed using
$\leq'_u$ in ${\downarrow}u$, and, since $v, v_0, \dots v_n \in {\downarrow}u $,
then
applying ${\downarrow} $ to $ v $, $ v_0 $, \dots gives the same result
whether computed in  
$({\downarrow}u , \leq _{{\restriction}{\downarrow}u})$
or in 
$(T, \leq)$, hence the notation is not ambiguous. 
Similarly, the incomparabilities of $v $ and  $ v_0 $, etc., 
are equivalently evaluated using 
 $\leq _{{\restriction}{\downarrow}u }$ or
$\leq$.

By (*), and now working in $T$,
there are finitely many elements 
$u_0, \dots, u_m $ in $M$ such that  
${\downarrow}''v  \subseteq 
{\downarrow}u \cup 
{\downarrow}u_0 \cup \dots \cup {\downarrow}u_m$. 
Of course, we can assume that 
$u_0 \not= u$, \dots,
 $u_m \not= u$,
hence the 
$u_ h$'s are $\leq$-incomparable with $v$,
since $v \leq u$, 
since the set of all the successors of $v$ is linearly ordered and since,
for each index $h$,  
we have that $u$ and   $u_ h$'s are distinct 
maximal elements of $T$, hence incomparable.  
Now, 
${\downarrow}''v  \cap 
{\downarrow}u = 
{\downarrow}_u'v $, since 
$\leq'' _{{\restriction}{\downarrow}u }$
is equal to $\leq'_u$, which  
extends 
$\leq _{{\restriction}{\downarrow}u }$
in ${\downarrow}u $.
In conclusion, 
 ${\downarrow}''v  \subseteq 
{\downarrow}_u'v \cup 
{\downarrow}u_0 \cup \dots \cup {\downarrow}u_m
\subseteq
{\downarrow}v \cup 
{\downarrow}v_0 \cup \dots \cup {\downarrow}v_n
\cup
{\downarrow}u_0 \cup \dots \cup {\downarrow}u_m
$,
with $v_0, \dots, v_n,
u_0, \dots, u_m$ all
$\leq$-incomparable with $v$,   what we had to show.
\end{proof}

\begin{lemma} \labbel{elt}
If $(T, \leq)$ is a countable well-founded  tree, then,
for every $t \in T$, 
the restriction $\leq _{{\restriction}{\downarrow}t }$ 
of $\leq$ to 
${\downarrow}t $ has a well-ordered downward-finite extension 
of order-type $\sigma(t)$.
 \end{lemma}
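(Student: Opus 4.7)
The plan is to proceed by induction on the rank $\rho(t)$ of $t$. In the base case $\rho(t)=0$ the element $t$ is minimal, so ${\downarrow}t=\{t\}$, $\sigma(t)=1$, and the unique order on the singleton is trivially a downward-finite well-ordered extension of order type $1$.

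For the inductive step, let $P(t)$ denote the set of immediate predecessors of $t$ and set $T'={\downarrow}t\setminus\{t\}$, with the order induced by $\leq$. First I would record two structural facts about the reversed tree: (i) the sets ${\downarrow}u$, for $u\in P(t)$, are pairwise disjoint, since any common element would have two distinct, incomparable immediate predecessors of $t$ among its successors, contradicting the requirement that the set of successors of each node be linearly ordered; (ii) $P(t)$ is exactly the set of maximal elements of $T'$, because an immediate predecessor of $t$ cannot have a further element strictly between it and $t$, and conversely. In particular ${\downarrow}t=\{t\}\cup\bigcup_{u\in P(t)}{\downarrow}u$, and $T'$ is a countable well-founded reversed tree whose maximal elements are the $u\in P(t)$.

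For each $u\in P(t)$, the inductive hypothesis (applied at $u$, which has strictly smaller rank) provides a downward-finite well-ordered extension $\leq'_u$ of $\leq_{\restriction{\downarrow}u}$ of order type $\sigma(u)$; note that ${\downarrow}u$ computed in $T'$ coincides with ${\downarrow}u$ computed in $T$, since $t\notin{\downarrow}u$. Applying Lemma \ref{union} to $T'$ together with the family $(\leq'_u)_{u\in P(t)}$, I would obtain a downward-finite well-ordered extension $\leq''$ of $\leq_{\restriction T'}$ of order type $\nsum_{u\in P(t)}\sigma(u)$. I would then extend $\leq''$ to all of ${\downarrow}t$ by placing $t$ strictly above every element of $T'$, producing a well-order $\leq'''$ of order type
\[
\Bigl(\nsum_{u\in P(t)}\sigma(u)\Bigr)+1=\sigma(t).
\]

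It remains to verify that $\leq'''$ is a downward-finite extension of $\leq_{\restriction{\downarrow}t}$. Extension is automatic, since $t$ is a $\leq$-upper bound of ${\downarrow}t$, and $\leq''$ already extends $\leq$ on $T'$. For downward-finiteness: if $v\in T'$, then $t\notin{\downarrow}'''v$, so ${\downarrow}'''v$ coincides with ${\downarrow}''v$ and the required cover is inherited from $\leq''$ (the witnesses $u_0,\dots,u_n$ remain $\leq$-incomparable with $v$ in the larger poset); if $v=t$, the condition is trivial because ${\downarrow}'''t={\downarrow}t\subseteq{\downarrow}t$. All the real work is carried by Lemma \ref{union}; the only substantive thing to be careful about here is verifying the disjointness and maximality facts so that Lemma \ref{union} applies cleanly to $T'$, and checking that appending $t$ on top does not spoil the downward-finite property.
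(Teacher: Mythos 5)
Your proof is correct and follows essentially the same route as the paper: induction on $\rho(t)$, applying the inductive hypothesis to each $u\in P(t)$, invoking Lemma \ref{union} on $\{v\in T\mid v<t\}$ (your $T'$), and then placing $t$ on top. The extra verifications you supply (disjointness of the ${\downarrow}u$'s, that $P(t)$ is the set of maximal elements of $T'$, and that appending $t$ preserves downward-finiteness) are details the paper passes over as trivial, and they check out.
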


 \begin{proof}
The proof is  by induction of $\rho(t)$.

The base step $\rho(t)=0$ is trivial,
since in this case  $| {\downarrow}t |=1= \sigma(t)$.

Suppose that $\rho(t)  > 0$ and that
the lemma holds for every $u \in T$ with $\rho(u) < \rho (t)$.
In particular, the lemma holds for every 
$u \in P(t)$, where $P(t)$ denotes the set of all
the immediate predecessors  of $t$.
Thus, for every $u \in P(t)$,
$\leq _{{\restriction}{\downarrow}u}$ 
has a well-ordered downward-finite extension $\leq'_u$
on  ${\downarrow}u$ 
of order-type $\sigma(u)$.
 
Let $T^*= \{ v \in T \mid v < t\}  $.
Notice that 
 $T^* =  \bigcup _{ u \in P(t)} {\downarrow}u $,
since the elements of $P(t)$ are the immediate predecessors of $t$ and,
if $v < t$, then $v \leq u$, for some $u \in P(t)$,
since, by the definition of a well-founded tree,
  the successors of $v$  form a finite linearly ordered set.  

We can now apply Lemma \ref{union}
to $T^*$,
getting   
a well-ordered downward-finite extension 
$\leq''$ 
of $\leq  _{{\restriction}{T^*}}$ on
$T^*$ in such
a way that 
$\leq''$
has order-type
$\nsum _{ u\in P(t)}\sigma(u)$.

But then $\leq''$ can be obviously extended
 to an order $\leq'''$ on the whole of ${\downarrow}t$
by putting $t$ on the top. 
Trivially $\leq'''$ is a  well-ordered downward-finite extension
of $\leq$,
and $\leq'''$ has order-type
$ \left(\nsum _{ u \in P(t)}\sigma(u)\right) + 1 = \sigma(t)$.
 \end{proof}

\begin{proof}[Proof of the first sentence in Theorem \ref{wft}]
If $T$ has only one root, then
the result is immediate by applying Lemma \ref{elt}
to this unique root.

The general case follows from the previous case and Lemma \ref{union}.  
\end{proof}

\begin{lemma} \labbel{union2}
Suppose that $(T, \leq)$ is a countable well-founded tree,
$M$ is the set of the maximal elements of $T$
and 
$\leq'$ is 
a well-ordered downward-finite
extension of  $\leq$.

If $\alpha$ is the order-type of $\leq'$ and,
for $u \in M$, 
 $\alpha_u$ is the order-type of 
$\leq' _{{\restriction}{\downarrow}u }$ in ${\downarrow}u$,
then $\alpha$ is a left-finite mixed sum of the 
$\alpha_u$'s. 
 \end{lemma}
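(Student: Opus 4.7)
The plan is to reduce the claim to the observation that $T$ partitions naturally into the down-sets ${\downarrow}u$ for $u \in M$, and then to translate the downward-finite condition on $\leq'$ directly into left-finiteness of the resulting mixed sum.

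First I would verify the partition. Given $t \in T$, the set of successors of $t$ in $(T, \leq)$ is finite and linearly ordered by the definition of a well-founded reversed tree; hence it has a $\leq$-maximum (namely $t$ itself if $t$ is maximal, or the top of the chain of successors above $t$ otherwise), and this maximum lies in $M$. If $u_1, u_2 \in M$ both satisfy $t \leq u_1$ and $t \leq u_2$, then $u_1, u_2$ are successors of $t$, so they are $\leq$-comparable; being both maximal, they must be equal. Thus every $t \in T$ belongs to ${\downarrow}u$ for a unique $u \in M$, and so $T = \bigsqcup_{u \in M} {\downarrow}u$ is a disjoint union.

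Next I would set up the realization of the mixed sum. Let $\varphi \colon (T, \leq') \to \alpha$ be the unique order-isomorphism, and for each $u \in M$ put $A_u = \varphi({\downarrow}u)$. Since $\leq'$ extends $\leq$, the order type of $({\downarrow}u, \leq'_{\restriction {\downarrow}u})$ equals the order type of $(A_u, <_\alpha)$, which is $\alpha_u$ by hypothesis. The $A_u$'s partition $\alpha$ because the ${\downarrow}u$'s partition $T$, so $\alpha$ is realized as a mixed sum of $(\alpha_u)_{u \in M}$ by $(A_u)_{u \in M}$.

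Finally I would check left-finiteness. Take any $a \in \alpha$, say $a = \varphi(v)$ with $v \in T$, and let $u \in M$ be the unique maximal element with $v \leq u$. Because $\leq'$ is a downward-finite extension of $\leq$, there exist $v_0, \dots, v_n \in T$, each $\leq$-incomparable with $v$, such that ${\downarrow}'v \subseteq {\downarrow}v \cup {\downarrow}v_0 \cup \dots \cup {\downarrow}v_n$. For each $i$, let $u_i \in M$ be the unique maximal element above $v_i$, so that ${\downarrow}v_i \subseteq {\downarrow}u_i$, and similarly ${\downarrow}v \subseteq {\downarrow}u$. Passing through $\varphi$, the set of elements of $\alpha$ strictly less than $a$ is contained in $\varphi({\downarrow}'v) \subseteq A_u \cup A_{u_0} \cup \dots \cup A_{u_n}$, which is a union of finitely many $A_w$'s. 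This is exactly left-finiteness of the realization $(A_u)_{u \in M}$, completing the proof.

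No step looks genuinely hard: the only thing to keep straight is that the finite chain-of-successors hypothesis on the tree is precisely what makes the decomposition $T = \bigsqcup_{u \in M} {\downarrow}u$ disjoint, and this is what allows the finite cover of ${\downarrow}'v$ guaranteed by downward-finiteness to collapse into a finite cover by $A_u$'s.
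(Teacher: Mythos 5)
Your proposal is correct and follows essentially the same route as the paper: partition $T$ into the sets ${\downarrow}u$ for $u\in M$, transport them through the order-isomorphism $\varphi$ to get the realization $(A_u)_{u\in M}$, and convert the downward-finite cover ${\downarrow}'v\subseteq{\downarrow}v\cup{\downarrow}v_0\cup\dots\cup{\downarrow}v_n$ into a finite cover by $A_u$'s after replacing each $v_i$ by the unique maximal element above it. The only difference is that you spell out the disjointness of the ${\downarrow}u$'s explicitly, and, like the paper (which remarks on this at the end of its proof), you never actually need the incomparability clause in the definition of a downward-finite extension.
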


\begin{proof}
Since $T= \bigcup _{u \in M} {\downarrow}u  $,
then $\alpha$ is obviously 
a  mixed sum of the 
$\alpha_u$'s. 
Indeed, if $\varphi$  is the bijection
from $T$ onto $\alpha$ induced by
$\leq'$, then, defining  $A_u = \varphi ( {\downarrow}u )$,
for $u \in M$, we get that 
$( A_u) _{u \in M} $ is an appropriate realization of $\alpha$.  

It remains to show that
$( A_u) _{u \in M} $ is a left-finite realization,
but this follows easily from the assumption that
$\leq'$ is 
a downward-finite
extension of 
$\leq$. Indeed, for every $v \in T$, 
there are elements
$v_0, \dots v_n \in T $ 
 such that
${\downarrow}'v \subseteq {\downarrow}v \cup 
{\downarrow}v_0 \cup \dots \cup {\downarrow}v_n$.
But $v \leq u$, for some
(actually, a unique) $u \in M$; similarly,
  $v_0 \leq u_0$, for some
$u_0 \in M$, etc.
Hence
${\downarrow}'v \subseteq 
{\downarrow}v \cup 
{\downarrow}v_0 \cup \dots \cup {\downarrow}v_n
\subseteq
{\downarrow}u \cup 
{\downarrow}u_0 \cup \dots \cup {\downarrow}u_n
$
(repetitions are possible, but they cause no trouble).

Thus if $a \in \alpha$ 
and $a= \varphi(v)$,  
then the set of the elements smaller than $a$ in $\alpha$ 
is contained in the finite union
$A _{u} \cup A _{u_0} \cup  \dots \cup A _{u_n}  $.  
Since $\varphi$  is surjective, this holds for every 
$a \in \alpha$, that is, the realization is left-finite. 

Notice that in the above proof
we do not need the assumption
that the $v_h$'s  are $\leq $-incomparable with $v$.
\end{proof}

\begin{lemma} \labbel{elt2}
Suppose that $(T, \leq)$ is a countable well-founded  tree
and
$\leq'$ is 
a well-ordered downward-finite
extension of  $\leq$.
Then,
for every $t \in T$, 
the restriction $\leq' _{{\restriction}{\downarrow}t }$ 
of $\leq'$ to 
${\downarrow}t $ has  
of order-type $ \leq \sigma(t)$.
 \end{lemma}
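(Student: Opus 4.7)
The proof goes by induction on $\rho(t)$, dualizing the strategy of Lemma \ref{elt}: where that lemma built up a downward-finite extension by splicing together realizations of $\nsum$ via Lemma \ref{union}, here we decompose the given extension $\leq'$ via Lemma \ref{union2} and invoke Theorem \ref{wt} to bound the resulting mixed sum. The base case $\rho(t)=0$ is immediate since $\downarrow t=\{t\}$ and $\sigma(t)=1$. For the inductive step, set $T^*=\{v\in T\mid v<t\}=\downarrow t\setminus\{t\}$; since $\leq'$ extends $\leq$, the element $t$ is the $\leq'$-maximum of $\downarrow t$, so the order-type of $\leq'_{\restriction \downarrow t}$ equals $\beta+1$, where $\beta$ is the order-type of $\leq'_{\restriction T^*}$. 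Recalling that $\sigma(t)=\bigl(\nsum_{u\in P(t)}\sigma(u)\bigr)+1$, it suffices to prove $\beta\leq \nsum_{u\in P(t)}\sigma(u)$.

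The key technical step is to verify that $\leq'_{\restriction T^*}$ is a downward-finite extension of $\leq_{\restriction T^*}$ on $T^*$ (which is itself a countable well-founded tree, with maximal elements exactly $P(t)$). Given $v\in T^*$, downward-finiteness of $\leq'$ on $T$ yields $u_0,\dots,u_n\in T$ that are $\leq$-incomparable with $v$ and satisfy ${\downarrow}'v\subseteq {\downarrow}v\cup{\downarrow}u_0\cup\dots\cup{\downarrow}u_n$. I claim that any $u_i$ with ${\downarrow}u_i\cap T^*\neq\emptyset$ already lies in $T^*$: if $w\leq u_i$ and $w<t$, then both $u_i$ and $t$ are successors of $w$ in $T$ (in the sense $\geq w$), hence comparable by the tree hypothesis; but $u_i\geq t$ would give $u_i>v$, contradicting incomparability, and $u_i=t$ is similarly excluded, so $u_i<t$. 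Discarding the $u_i$'s with ${\downarrow}u_i\cap T^*=\emptyset$, and noting that ${\downarrow}u_i\subseteq T^*$ whenever $u_i\in T^*$ (and likewise ${\downarrow}v\subseteq T^*$), produces the required witnesses inside $T^*$.

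Granted this, Lemma \ref{union2} applied to $T^*$ expresses $\beta$ as a left-finite mixed sum of $(\alpha_u)_{u\in P(t)}$, where $\alpha_u$ denotes the order-type of $\leq'_{\restriction \downarrow u}$. The inductive hypothesis gives $\alpha_u\leq\sigma(u)$ for each $u\in P(t)$, and since $P(t)$ is countable, Theorem \ref{wt} (together with its finite analogue, Carruth's theorem, when $P(t)$ is finite) yields $\beta\leq \nsum_{u\in P(t)}\alpha_u$. Monotonicity of $\nsum$ on $\omega$-indexed sequences, which follows from the identities \eqref{idid} and Proposition \ref{factsww}(2), then gives $\nsum_{u\in P(t)}\alpha_u\leq \nsum_{u\in P(t)}\sigma(u)$, completing the induction. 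The main obstacle is precisely the downward-finiteness verification above: without the tree hypothesis, the $u_i$'s could wander onto branches of $T$ lying outside $\downarrow t$, and the inductive reduction to $T^*$ would break down.
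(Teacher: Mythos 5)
Your proof is correct and follows essentially the same route as the paper: induction on $\rho(t)$, restricting $\leq'$ to $T^*=\{v\in T\mid v<t\}$, applying Lemma \ref{union2} together with Theorem \ref{wt} and monotonicity to bound the order-type of $\leq'_{{\restriction}T^*}$ by $\nsum_{u\in P(t)}\sigma(u)$, and adding $1$ for $t$ on top. Your verification that $\leq'_{{\restriction}T^*}$ is a downward-finite extension of $\leq_{{\restriction}T^*}$ (using linear ordering of the successors of $w$ to force each relevant witness $u_i$ into $T^*$) is in fact spelled out more carefully than the paper's own one-line justification of that step.
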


 \begin{proof}
By induction of $\rho(t)$.

The base step $\rho(t)=0$ is trivial,
since in this case  $| {\downarrow}t |=1= \sigma(t)$.

Suppose that $\rho(t)  > 0$ and that
the lemma holds for every $u \in T$ with $\rho(u) < \rho (t)$.
In particular, the lemma holds for every 
$u \in P(t)$, where $P(t)$ is the set of all
the immediate predecessors  of $t$.
Thus, for every $u \in P(t)$,
if $\alpha_u$ is the order-type of  
$\leq' _{{\restriction}{\downarrow}u}$, 
then $ \alpha _u \leq \sigma(u)$.
 
Letting $T^*= \{ v \in T \mid v < t\}  $,
we have that 
$\leq' _{{\restriction}T^*}$ 
is 
a well-ordered downward-finite
extension of
$\leq _{{\restriction}T^*}$,
since, by the very definition of ${\downarrow}t$,
$t$ is comparable with every element of 
${\downarrow}t$, and 
comparable elements are not allowed
in the definition  of a downward-finite extension,
Definition \ref{otc}. 

Since, as we noticed in the proof of 
\ref{elt},  
 $T^* =  \bigcup _{ u \in P(t)} {\downarrow}u $,
then we can apply Lemma \ref{union2}.
Hence, if $\alpha$ is the order-type of 
$\leq' _{{\restriction}T^*}$, then
$\alpha$ is a left-finite mixed sum of 
$( \alpha _u) _{u \in P(t)} $.
Since $ \alpha _u \leq \sigma(u)$,
for every $u \in P(t)$, and  using Theorem \ref{wt} and
Proposition \ref{factsww}(2),
we get 
$\alpha \leq 
\nsum _{ u \in P(t)} \alpha _u
\leq
\nsum _{ u \in P(t)}\sigma(u)
$.
Clearly, the order-type of
$\leq' _{{\restriction}{\downarrow}t}$
is $\alpha+1$  and we are done, since, by above,
$\alpha + 1 \leq 
\left(\nsum _{ u \in P(t)}\sigma(u)\right) + 1 = \sigma(t)$.
\end{proof}
 
\begin{proof}[Proof of the last sentence in Theorem \ref{wft}]
If $T$ has only one root, 
the result is immediate from Lemma \ref{elt2}.

The general case follows from the previous case, Lemma \ref{union}
and again  Theorem \ref{wt}.
\end{proof}

\begin{acknowledgement} 
We thank an anonymous referee of \cite{w} for many interesting suggestions
 concerning the relationship between
natural sums and the theory of well-quasi-orders. 
We thank Harry Altman for stimulating discussions.
 \end{acknowledgement}

\begin{disclaimer}
Though the author has done his best efforts to compile the following
list of references in the most accurate way,
 he acknowledges that the list might 
turn out to be incomplete
or partially inaccurate, possibly for reasons not depending on him.
It is not intended that each work in the list
has given equally significant contributions to the discipline.
Henceforth the author disagrees with the use of the list
(even in aggregate forms in combination with similar lists)
in order to determine rankings or other indicators of, e.~g., journals, individuals or
institutions. In particular, the author 
 considers that it is highly  inappropriate, 
and strongly discourages, the use 
(even in partial, preliminary or auxiliary forms)
of indicators extracted from the list in decisions about individuals (especially, job opportunities, career progressions etc.), attributions of funds, and selections or evaluations of research projects.
\end{disclaimer}

This is a preliminary version, it might contain inaccuraccies (to be
precise, it is more likely to contain inaccuracies than planned subsequent versions).
We have not yet performed a completely accurate
search in order to check whether some of the results presented here
are already known. Credits for already known results should go to the
original discoverers.
The above applies in particular with respect to Section \ref{not}.

\end{document}